\newcommand{\setR}{{\mathbb R}}
\newcommand{\setN}{{\mathbb N}}
\newcommand{\ent}{\mathcal H}
\newcommand{\vN}{\mathtt{H}}
\newcommand{\fish}{\mathcal F}
\newcommand{\nrg}{\mathcal E}
\newcommand{\fnc}{{\mathcal U}}
\newcommand{\aux}{{\mathcal V}}
\newcommand{\flow}{\mathcal S}
\newcommand{\nrgm}{\overline{\mathcal E}}
\newcommand{\dn}{\mathrm{d}}
\newcommand{\dd}{\,\mathrm{d}}
\newcommand{\prop}{\mathcal{P}(\mathbb{R}^d)}
\newcommand{\propo}{\mathcal{P}_2(\mathbb{R}^d)}
\newcommand{\wass}{{\mathbf W}_2}
\newcommand{\loc}{\text{loc}}
\newcommand{\vpot}{\Phi}
\newcommand{\mom}{\mathfrak m_2}
\newcommand{\eins}{\mathbb{I}}
\newcommand{\dff}{\mathrm{D}}
\newcommand{\hess}{\mathrm{Hess}}
\newcommand{\dv}{\mathrm{div}}
\newcommand{\theL}{{\mathbf L}}
\newcommand{\velo}{\mathbf v}
\newcommand{\heat}{{\mathcal K}}
\newcommand{\norm}[1]{\left\lVert#1\right\rVert}
\newcommand{\norms}[1]{\left\lvert#1\right\rvert}
\newcommand{\rn}{\mathbb{R}^\dimens}
\newcommand{\intrn}{\int_{\mathbb{R}^\dimens}}
\newcommand{\one}{\mathds{1}}
\newcommand{\id}{\textnormal{id}}
\newcommand{\ntr}{\operatorname{tr}}
\newcommand{\leb}{\mathcal{L}^\dimens}
\newcommand{\propp}{\mathcal{P}}
\newcommand{\Dom}[1]{\mathrm{Dom}(#1)}
\newcommand{\wasser}[2]{\textnormal{\bfseries W}_2^2(#1,#2)}
\newcommand{\wassernoq}[2]{\textnormal{\bfseries W}_2(#1,#2)}
\newcommand{\s}{\sigma}
\newcommand{\N}[2]{\mathcal{N}[#1,#2]}
\newcommand{\moment}{\textbf{m}}
\newcommand{\eps}{\varepsilon}
\newcommand{\dimens}{d}
\newcommand{\ebnd}{{\mathrm E_\lambda}}
\newcommand{\lip}{\mathrm L}
\renewcommand{\phi}{\varphi}
\renewcommand{\epsilon}{\varepsilon}
\newtheorem{theorem}{Theorem}[section]
\newtheorem{definition}[theorem]{Definition}
\newtheorem{corollary}[theorem]{Corollary}
\newtheorem{lemma}[theorem]{Lemma}
\newtheorem{proposition}[theorem]{Proposition}
\newtheorem{remark}[theorem]{Remark}
\newtheorem{conjecture}{Conjecture}
\newtheorem{example}{Example}
\newcommand\blfootnote[1]{%
	\begingroup
	\renewcommand\thefootnote{}\footnote{#1}%
	\addtocounter{footnote}{-1}%
	\endgroup
}
\title[Sixth Order QDD]{Gradient Flow Structure of a Multidimensional Nonlinear Sixth Order Quantum-Diffusion Equation}
\author{Daniel Matthes}
\email{matthes@ma.tum.de}
\address{Zentrum Mathematik, TU München, Boltzmannstrasse 3, D-85748 Garching, Germany}
\author{Eva-Maria Rott}
\address {Zentrum Mathematik, TU München, Boltzmannstrasse 3, D-85748 Garching, Germany}
\email{eva-maria.rott@tum.de}
\subjclass[2010]{Primary: 35K30, Secondary: 35B45, 35B40} 
\keywords{Higher-order diffusion equations, quantum diffusion model, Wasserstein gradient flow, flow interchange estimate, long-time behavior, linearization}
\date{\today}
\begin{document}

\begin{abstract}
  A nonlinear parabolic equation of sixth order is analyzed.
  The equation arises as a reduction of a model from quantum statistical mechanics,
  and also as the gradient flow of a second-order information functional with respect to the $L^2$-Wasserstein metric. 
  First, we prove global existence of weak solutions for initial conditions of finite entropy
  by means of the time-discrete minimizing movement scheme.
  Second, we calculate the linearization of the dynamics around the unique stationary solution,
  for which we can explicitly compute the entire spectrum.
  A key element in our approach is a particular relation between the entropy, the Fisher information
  and the second order functional that generates the gradient flow under consideration.
\end{abstract}
\blfootnote{
  This research was supported by the DFG Collaborative Research Center TRR 109, ``Discretization in Geometry and Dynamic''.
}
\maketitle

\tableofcontents

\section{Introduction}

\subsection{The equation}
The following nonlinear parabolic evolution equation of sixth order is considered:
\begin{align}\label{eq:QDD6intro}
  \partial_t u = \dv \big( u \,\nabla\big(\vpot(u) + \lambda^3|x|^2\big)\big),
  \quad \vpot(u) =
  \frac12 \norm{\nabla^2 \log u}^2+ \frac{1}{u} \nabla^2 : (u \nabla^2 \log u),
\end{align}
where $\lambda\ge0$ is a given parameter.
There are at least two different contexts in which \eqref{eq:QDD6intro} plays a role.

The first is the semi-classical approximation of the nonlocal quantum drift-diffusion model by Degond et al \cite{DegMeRi05}.
In the formal asymptotic expansion of that equation in terms of the Planck constant $\hbar$,
the right-hand side of \eqref{eq:QDD6intro} with $\lambda=0$ appears as the term of order $\hbar^4$,
after the linear diffusion operator $\Delta u$ at order $\hbar^0$
and the operator $-\nabla^2:(u\nabla^2\log u)$, which is related to the Bohm potential, at order $\hbar^2$.
More details on the derivation of the model and its formal expansion are given below in Section \ref{sct:derivation}.

The other context, more relevant to the paper at hand,
is that of gradient flows in the $L^2$-Wasserstein distance.
Consider the following three functionals,
defined on --- at the moment for simplicity: strictly positive --- probability densities $u:\setR^\dimens\to\setR$
by
\begin{equation}
  \label{eq:HFE}
  \begin{split}
    \ent_\lambda(u) &= \intrn u\left(\log u+\frac\lambda2|x|^2\right)\dd x, \\
    \fish_\lambda(u) &= \frac12\intrn u\left(|\nabla\log u|^2+\lambda^2|x|^2\right)\dd x, \\
    \nrg_\lambda(u) &= \frac12\intrn u\left(\|\nabla^2\log u\|^2+2\lambda^3|x|^2\right)\dd x,
  \end{split}
\end{equation}
which we shall refer to as --- \emph{$\lambda$-perturbed} if $\lambda>0$ --- \emph{entropy}, \emph{Fisher information}, and \emph{energy}, respectively.
The celebrated result of \cite{JKO} is that
the gradient flow of $\ent_\lambda$ in the $L^2$-Wasserstein metric is the linear Fokker-Planck equation,
\begin{align}
  \label{eq:FP}
  \partial_tu = \Delta u + \lambda\dv(xu).
\end{align}
In \cite{GST} --- see also\cite[Example 11.1.10]{AGS} --- it has been shown that the gradient flow of $\fish_\lambda$ is
the so-called quantum drift-diffusion or DLSS equation,
\begin{align}
  \label{eq:DLSS}
  \partial_tu = -\nabla^2:(u\nabla^2\log u) + \lambda\dv(xu).
\end{align}
The starting point for our analysis is that \eqref{eq:QDD6intro} is the gradient flow of $\nrg_\lambda$,
at least formally, that is, the potential $\vpot$ in \eqref{eq:QDD6intro} is the variational derivative of $\nrg_\lambda$.
The reason for considering $\nrg_\lambda$ as potential for a gradient flow
is more profound than its formal similarity with $\ent_\lambda$ and $\fish_\lambda$.
There is an intimate relation between $\ent_\lambda$, $\fish_\lambda$ and $\nrg_\lambda$,
that has already been the basis for deriving sharp self-similar asymptotics in \cite{MMS},
and that we shall elaborate on in Section \ref{sct:int-hessian} below.
In a nutshell, the dissipation of $\ent_\lambda$ along the heat flow is $\fish_\lambda$,
the dissipation of $\fish_\lambda$ along the heat flow is $\nrg_\lambda$, up to a multiple of $\fish_\lambda$ itself,
and this equals the dissipation of $\ent_\lambda$ along the flow of \eqref{eq:DLSS}.
In this spirit, one may consider \eqref{eq:DLSS} and \eqref{eq:QDD6intro}, respectively,
as fourth and sixth order analogues of the second order linear Fokker-Planck equation \eqref{eq:FP}.
\smallskip

Our analytical results are two-fold.
First, we give a proof of existence of weak solutions to the initial value problem for \eqref{eq:QDD6intro}
on the whole space $\setR^d$ for initial data with finite entropy and finite second moment.
This result is proven with full rigor.
Second, we study the long-time asymptotics of solutions using a linearization around the steady state.
This part is formal in the sense that we calculate the linearization for sufficiently smooth perturbations of the steady state
and discuss the spectral properties of an appropriate closure of the linear operator.


\subsection{Existence of solutions}
Global existence of non-negative weak solutions to \eqref{eq:QDD6intro} with $\lambda=0$
on the $\dimens$-dimensional torus, i.e., with periodic boundary conditions,
has been proven in \cite{JuMi} for $\dimens=1$,
and in \cite{BJM} for $\dimens=2$ and $\dimens=3$.
The main technical ingredient of these proofs is a particular regularization of the evolution equation,
namely by $\epsilon\Delta^3\log u$.
This regularization produces approximations of the true solution
that are smooth and have an $\epsilon$-dependent positive lower bound.
Smoothness of both $u$ and $\log u$ then allows to perform all the necessary a priori estimates
on the approximation, and these pass to the limit $\epsilon\downarrow0$.
An extension of this method from the torus to the whole space --- if possible at all --- is at least not straight-forward,
since the intermediate estimates involve simultaneous Sobolev estimates on $u$ and $\log u$,
that would contradict each other on unbounded domains.

Here, we shall prove existence of solutions in $\dimens\le3$ on the whole space $\setR^d$.
Our technical device is very different from the one recalled above:
we invoke the machinery of metric gradient flows.
Our approximations are of lower regularity, and we do not have any information about positivity.
To justify the a priori estimates, we need the method of flow interchange with the heat equation \cite{GST,MMS}.
In contrast to the constructions in \cite{JuMi,BJM}, we do not modify the equation,
but perform a variational discretization in time using the minimizing movement scheme.
More specifically: given initial probability density $u_0$ of finite second moment and entropy,
and a time step $\tau>0$,
define inductively a sequence $(u_\tau^n)_{n=0}^\infty$ by $u_\tau^0=u_0$,
and $u_\tau^n$ being a minimizer of
\begin{align}
  \label{eq:MMintro}
  u\mapsto \frac1{2\tau}\wass(u,u_\tau^{n-1})^2 + \nrg_\lambda(u).
\end{align}
We recall the $L^2$-Wasserstein-distance $\wass$ below in Section \ref{sct:background},
and we prove that the inductive procedure is well-defined.
Our result about existence is that the sequences $(u_\tau^n)$ approximate a weak solution to \eqref{eq:QDD6intro}.
\begin{theorem}
  \label{thm:existence}
  Assume $\dimens \leq 3$.
  Let an initial datum $u_0$ be given that is a probability density with finite entropy, $\ent_0(u_0)<\infty$,
  and finite second moment.
  For each $\tau>0$, define a sequence $(u_\tau^n)_{n=0}^\infty$ inductively as described above.
  Then the piecewise constant ``interpolations'' $\bar u_\tau:\setR_{\ge0}\to L^1(\setR^d)$
  with
  \begin{align}
    \label{eq:interpol}
    \bar u_\tau(t)=u_\tau^n \quad \text{for all $(n-1)\tau<t\le n\tau$}    
  \end{align}
  converge along a suitable null sequence $(\tau_k)$ to a limit $u:\setR_{\ge0}\times\setR^d\to\setR$ in $L^2_\loc(\mathbb{R}_{>0};W^{2,2}(\setR^d))$. 
  And that limit $u$ is a weak solution to \eqref{eq:QDD6intro} with $u(0)=u_0$ in the following sense:
  $t\mapsto u(t,\cdot)\leb$ is a locally H\"older continuous curve 
  from $\setR_{\ge0}$ in $(\propo, \wass)$, 
  the roots $s=\sqrt{u}$ and $z=\sqrt[4]{u}$ are of regularity
  \begin{align}
    \label{eq:rootregular}
    s\in L^2_\loc(\setR_{>0};W^{2,2}(\setR^d)), \quad z\in L^4_\loc(\setR_{>0};W^{1,4}(\setR^d)),
  \end{align}
  respectively, and
  for every test function $\varphi\in C^\infty_c(\setR_{>0}\times\setR^d)$,
  \begin{equation}
    \label{eq:weak}
    \int_{0}^{\infty} \intrn \big(\partial_t \varphi-2\lambda^3x\cdot\nabla\varphi\big) u \dd x\dd t
    = \int_{0}^{\infty} \N{u}{\varphi} \dd t
  \end{equation} 
  where the nonlinearity is given by
  \begin{align}
    \label{eq:N}
    \N{u}{\varphi} = -4\intrn\big\{
    2\nabla^2\varphi:(\ell^2) + \frac12(s\nabla^2\Delta\varphi+2\nabla s\cdot\nabla^3\varphi):\ell 
    \big\}\dd x,
  \end{align}
  with $\ell=\nabla^2s-4\nabla z\otimes\nabla z$,
  which coincides with $\frac12\sqrt u\,\nabla^2\log u$ where $u>0$.
\end{theorem}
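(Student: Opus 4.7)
The plan is to execute the standard metric-gradient-flow programme adapted to the sixth-order functional $\nrg_\lambda$, in four stages.

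\emph{Well-posedness of the scheme and basic estimates.} First I would verify that the iteration \eqref{eq:MMintro} is well defined: $\nrg_\lambda\ge 0$ is narrowly lower semicontinuous, the Wasserstein term plus the confinement for $\lambda>0$ (and the finite second moment of $u_\tau^{n-1}$ for $\lambda=0$) provide tightness of minimizing sequences, and the direct method yields a minimizer $u_\tau^n$. The variational step immediately gives the monotonicity $\nrg_\lambda(u_\tau^n)\le\nrg_\lambda(u_\tau^{n-1})$ and the telescoping bound $\sum_n \wass(u_\tau^n,u_\tau^{n-1})^2 \le 2\tau(\nrg_\lambda(u_0)-\inf\nrg_\lambda)$, from which the Hölder-$1/2$ continuity of $\bar u_\tau$ in $(\propo,\wass)$ follows by standard arguments.

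\emph{Flow interchange.} The core a~priori estimate on $s=\sqrt{u}$ and $z=\sqrt[4]{u}$ will come from the flow interchange method of \cite{MMS} with the heat semigroup $\heat_r$. Since the heat flow is the Wasserstein gradient flow of $\ent_\lambda$ and satisfies an EVI, minimality of $u_\tau^n$ implies
\begin{equation*}
  \ent_\lambda(u_\tau^n)-\ent_\lambda(u_\tau^{n-1})\;\le\;-\tau\,\frac{\dn}{\dn r}\bigg|_{r=0^+}\!\nrg_\lambda(\heat_r u_\tau^n).
\end{equation*}
The crucial input, advertised in the introduction as the relation between $\ent_\lambda$, $\fish_\lambda$ and $\nrg_\lambda$, is that the right-hand side controls a coercive quantity $\diss(u_\tau^n)$ that in turn dominates $\|\nabla^2 s\|_{L^2}^2+\|\nabla z\|_{L^4}^4$ (this is where the restriction $\dimens\le 3$ will enter via Sobolev embedding). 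Summing in $n$ telescopes the entropy and produces the $\tau$-uniform bounds $s_\tau\in L^2_\loc W^{2,2}$ and $z_\tau\in L^4_\loc W^{1,4}$.

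\emph{Compactness.} The time regularity in $\wass$, combined with a refined Arzelà--Ascoli argument, extracts a subsequence along which $\bar u_{\tau_k}\to u$ in $C^{0,1/2}_\loc(\setR_{\ge 0};(\propo,\wass))$; the higher-order bounds together with a Dubinskii/Aubin--Lions-type lemma then upgrade this to strong convergence of $s_{\tau_k}\to s$ and $z_{\tau_k}\to z$ in the corresponding Bochner--Sobolev spaces, which is strong enough to pass to the limit in pointwise products of first- and second-order spatial derivatives.

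\emph{Passage to the limit and main obstacle.} The discrete Euler--Lagrange equation for \eqref{eq:MMintro}, obtained by perturbing $u_\tau^n$ along the flow of $\xi=\nabla\varphi$, takes the schematic form
\begin{equation*}
  \frac{1}{\tau}\intrn\nabla\varphi\cdot(\id-T_{n-1}^n)\,u_\tau^n\dd x \;=\; \delta\nrg_\lambda(u_\tau^n)[\nabla\varphi],
\end{equation*}
with $T_{n-1}^n$ the optimal transport map from $u_\tau^n$ to $u_\tau^{n-1}$. Testing against the piecewise-constant-in-time interpolant of $\varphi\in C^\infty_c(\setR_{>0}\times\setR^d)$ and performing discrete integration by parts recovers the left-hand side of \eqref{eq:weak} in the limit, with the confinement term contributing the $-2\lambda^3 x\cdot\nabla\varphi$ factor. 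The main obstacle lies on the right-hand side: the first variation of $\nrg_\lambda$ must be expressed \emph{without ever writing $\log u$}, since $u$ is only non-negative and may vanish. This is precisely the rationale for formula \eqref{eq:N}, which uses only $s$, $z$ and their derivatives. I would therefore rewrite $\delta\nrg_\lambda(u_\tau^n)[\nabla\varphi]$ as $\N{u_\tau^n}{\varphi}$ by careful integration by parts \emph{already at the discrete level}, verify that every factor in the resulting bilinear and trilinear expressions is controlled by the $L^2_\loc W^{2,2}\times L^4_\loc W^{1,4}$-regularity from the flow interchange step, and finally pass to the limit using the strong convergences from the compactness step.
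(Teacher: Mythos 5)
Your overall architecture is the right one, but there are three concrete gaps that, left unaddressed, would cause the argument to fail.

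\textbf{The initial energy may be infinite.} The theorem only assumes $\ent_0(u_0)<\infty$ and $\mom(u_0)<\infty$, not $\nrg_\lambda(u_0)<\infty$. Your telescoping bound $\sum_n\wass(u_\tau^n,u_\tau^{n-1})^2\le 2\tau\big(\nrg_\lambda(u_0)-\inf\nrg_\lambda\big)$ is therefore vacuous, and the claimed uniform $1/2$-H\"older continuity of $\bar u_\tau$ cannot be obtained this way. What saves the day in the paper is a universal bound $\nrgm_\lambda(\mu_\tau^N)\le \ebnd\big(1+(N\tau)^{-2/3}\big)$, with $\ebnd$ depending only on $\ent_0(u_0)$ and $\mom(u_0)$; it is proved by combining the flow-interchange dissipation estimate with a second flow interchange using the dilation semigroup (to control the second moment) and a Gagliardo--Nirenberg interpolation $\nrgm_0(\mu)^{3/2}\lesssim\int\interleave\nabla^3\sqrt u\interleave^2\dd x$. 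On top of that, a first-step lemma comparing the scheme's initial step with a heat-mollified $K_\sigma\ast u_0$ is needed. The net effect is that the H\"older exponent degenerates to $1/12$ near $t=0$, and the whole H\"older estimate carries a $\tau$-dependent additive error.

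\textbf{What the heat-flow interchange actually controls.} You write that the flow interchange ``dominates $\|\nabla^2 s\|_{L^2}^2+\|\nabla z\|_{L^4}^4$.'' Those quantities come for free from $\nrgm_\lambda(u_\tau^n)<\infty$ via the Gianazza--Savar\'e--Toscani estimate; the flow interchange is not needed for them. The entropy dissipation instead yields the \emph{third-order} control $\int\big(\interleave\nabla^3\sqrt{u_\tau^n}\interleave^2+|\nabla\sqrt[6]{u_\tau^n}|^6\big)\dd x$, via the pointwise polynomial inequality of Bukal--J\"ungel--Matthes, and it is precisely this gain of one derivative on $\sqrt u$ that the Aubin--Lions-type argument in the compactness step needs. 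Note also that differentiating $r\mapsto\nrgm_0(K_r\ast u_\tau^n)$ at $r=0^+$ is not legitimate as stated, because $\nrgm_0$ need not be continuous at $r=0$ (the fourth root is not smooth at the origin, so $\nabla\sqrt[4]{K_r\ast u}\to\nabla\sqrt[4]{u}$ in $L^4$ can fail). The paper resolves this with an $\eps$-regularization $\nrg^\eps(u)=2\int\|\nabla^2\sqrt{u+\eps}-4\nabla\sqrt[4]{u+\eps}\otimes\nabla\sqrt[4]{u+\eps}\|^2\dd x$, passes the time-differentiation inequality through $\nrg^\eps$ at fixed $\eps$, and then sends $\eps\downarrow0$ by dominated convergence on the left and Fatou on the right.

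\textbf{The discrete Euler--Lagrange equation.} You plan to write it with the optimal transport map $T_{n-1}^n$ explicitly; this is the classical JKO route, and it does work, but it requires separate arguments to control the convergence of $T_{n-1}^n$. The paper takes a cleaner route: it applies the flow interchange lemma with the linear mass transport $\flow_\s\mu=X_\s\#\mu$ along $-\nabla\varphi$, which is an $(-\alpha)$-flow for $\aux(\mu)=\int\varphi\dd\mu$ provided $\nabla^2\varphi\ge-\alpha\eins$. This gives, without the transport map, the discrete weak form
\begin{align*}
 \left| \intrn \varphi\,\frac{u_\tau^n - u_\tau^{n-1}}\tau\dd x + \N{u^n_\tau}{\varphi} + 2\lambda^3\intrn x\cdot\nabla\varphi\, u_\tau^n\dd x\right|
      \leq \frac{\alpha\tau}{2} \left(\frac{\wassernoq{u^n_\tau}{u^{n-1}_\tau}}\tau\right)^2,
\end{align*}
whose error term is summable by the basic energy estimate and vanishes in the limit $\tau\downarrow0$. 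The key intermediate computation, which you should not underestimate, is the explicit evaluation of $\frac{\dn}{\dd\s}\big|_{\s=0}\nrgm_\lambda(X_\s\#\mu)$ at the level of $s=\sqrt u$ and $z=\sqrt[4]u$ via the change-of-variables formula and the push-forward identity; it is precisely this computation that produces the formula \eqref{eq:N} in a logarithm-free form.
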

\begin{remark}
  In view of \eqref{eq:rootregular}, one has $\ell\in L^2_\loc(\setR_{>0}\times\setR^d)$,
  and so $\varphi$ is ``tested'' against a local $L^1$-function in \eqref{eq:N}.
  It is far from obvious that \eqref{eq:weak} with the nonlinearity \eqref{eq:N} is indeed
  a weak formulation of \eqref{eq:QDD6intro}.
  Equality of $- \N{u}{\varphi}$ with a more ``natural'' weak formulation of the nonlinearity in  \eqref{eq:QDD6intro}'s right hand side,
  like
  \begin{align*}
    \intrn\varphi\,\dv\big(u\nabla\Phi(u)\big)\dd x
    = \intrn \dv(u\nabla\varphi)\Phi(u)\dd x,
  \end{align*}
  for smooth and positive solutions $u$ can be verified by a direct but tedious
  computation involving various integration by parts.
  A more conceptual way to recognize $- \N{u}{\varphi}$ as a weak formulation
  is explained at the beginning of Section \ref{sct:deq}.
\end{remark}
Theorem \ref{thm:existence} is proven by time-discrete approximation of the solution $u$
via the celebrated minimizing movement scheme.
The main compactness estimate for passing to the time-continuous limit is provided
by the dissipation of the (unperturbed) entropy $\ent_0$, which formally amounts to
\begin{equation}
  \label{eq:apriori}
  -\frac{\dn}{\dd t}\ent_0(u) \ge
  \kappa \intrn \left(\interleave \nabla^3 \sqrt{u}\interleave^2 + \norms{\nabla \sqrt[6]{u}}^6\right) \dd x,
\end{equation}
with some positive $\kappa>0$.
The formal calculations leading to \eqref{eq:apriori} via integration by parts
are identical to the ones used in \cite{BJM}.
The justification of these estimates in the whole-space case is technically more involved.


\subsection{Long-time asymptotics}
\label{sct:int-hessian}
The interpretation of \eqref{eq:QDD6intro} as a Wasserstein gradient flow is essential for our second main result,
which concerns the long time asymptotics of $u$.
First assume $\lambda>0$, in which case there is an unique equilibrium $U_\lambda$ for \eqref{eq:QDD6intro} (with mass equal to 1),
given by
\begin{align}
  \label{eq:Ulambda}
  U_\lambda(x) = \left(\frac\lambda{2\pi}\right)^{d/2}\exp\left(-\frac\lambda2|x|^2\right).
\end{align}
Our approach to understanding the dynamics near $U_\lambda$
is to formally calculate a linearization of \eqref{eq:QDD6intro} at $U_\lambda$ and to determine its spectrum.
We wrote \emph{a} linearization since there are several competing concepts for linearization in nonlinear diffusion equations,
providing different pieces of information about the long-time asymptotics. 
Following the ideas of \cite{DMc}, we study here the ``linearization in Wasserstein'',
which is given by the so-called \emph{displacement Hessian} of $\nrg_\lambda$ at $U_\lambda$.
Very informally, the displacement Hessian of a functional $\fnc$ at a critical point $U_*$
is the representation of $\fnc$'s second variational derivative
with respect to the scalar product in $H^{-1}(\setR^d;U_*\dd\leb)$.
A definition and a more intuitive interpretation in terms of Lagrangian maps is given in Section \ref{sct:disphess}.

Displacement Hessians are rarely used for the analysis of long-time asymptotics
since the extraction of rigorous analytical information requires a lot
of a priori knowledge about regularity from the solution.
Particularly when it comes to proving higher order asymptotics,
alternative linearizations are often easier to handle;
we refer to the discussions in \cite{SeisTF,Koch}.
The most famous application of displacement Hessians
concerns the result on self-similar asymptotics for the porous medium equation \cite{Ot01};
further applications, also to higher order diffusion equations, can be found e.g. in \cite{McS,Slep,Seis}.
In the situation at hand, we use the Wasserstein linearization because of its compatibility
with the special structure of \eqref{eq:QDD6intro} that we outline below.

Since the derivation of the result is probably more interesting than the result itself,
we briefly indicate the main ingredient,
which is the aforementioned intimate relation between the three functionals in \eqref{eq:HFE}:
recall that the linear Fokker-Planck equation \eqref{eq:FP} is the Wasserstein gradient flow of $\ent_\lambda$.
Consider a solution $(w_r)_{r\ge0}$ to that flow, i.e.,
\begin{align}
  \label{eq:LFP}
  \partial_rw_r = \Delta w_r + \lambda\dv(xw_r).
\end{align}
It is a well-known fact that $\fish_\lambda$
is the dissipation of $\ent_\lambda$ along \eqref{eq:LFP}, i.e.,
\begin{align}
  \label{eq:lessmagic}
  \fish_\lambda(w_r)-\fish_\lambda(U_\lambda) = -\frac12\frac{\dn}{\dd r}\ent_\lambda(w_r).
\end{align}
The connection between $\ent_\lambda$ and $\nrg_\lambda$ --- and, implicitly, also $\fish_\lambda$ ---
is given by
\begin{align}
  \label{eq:justmagic}
  \nrg_\lambda(w_r) -\nrg_\lambda(U_\lambda)
  = \frac14\frac{\dn^2}{\dd r^2}\ent_\lambda(w_r) - \frac\lambda2\frac{\dn}{\dd r}\ent_\lambda(w_r).
\end{align}
The relation \eqref{eq:justmagic} is derived in Section \ref{sct:HFE}.
It will play the same role in our studies on \eqref{eq:QDD6intro}
as \eqref{eq:lessmagic} has played for the analysis of the DLSS equation \eqref{eq:DLSS} in \cite{McS}.
That is, we use \eqref{eq:justmagic} to express the displacement Hessian of $\nrg_\lambda$
in terms of the displacement Hessian of $\ent_\lambda$. 

For the linear Fokker-Planck equation \eqref{eq:LFP},
which is the gradient flow of the relative entropy $\ent_\lambda$ and has $U_\lambda$ as a critical point as well,
the displacement Hessian has been calculated in \cite[Proposition 2]{DMc}:
it is the extension of the formal operator
\begin{align}
  \label{eq:theL}
  \theL_\lambda\varphi = -\frac1{U_\lambda}\dv\big(U_\lambda\,\nabla\varphi\big) =- \Delta \varphi + \lambda x\cdot\nabla\varphi
\end{align}
to $W^{1,2}(\setR^d;U_\lambda\dd\leb)$.
The corresponding spectrum of $\theL_\lambda$ is well-known:
it is purely discrete with eigenvalues that are precisely the positive integer multiples of $\lambda$.
The corresponding eigenfunctions are Hermite polynomials.
The derivation of $\theL_\lambda$ might appear a bit weird:
first, one re-writes the linear Fokker-Planck equation \eqref{eq:LFP} on the $L^2$-Wasserstein space,
obtaining a highly non-linear gradient flow,
and then calculates its linearization,
which is --- up to dualization --- again the original equation \eqref{eq:LFP}.
The key point is that the Wasserstein linearization is compatible with the relation \eqref{eq:justmagic},
i.e., the linearization of the highly non-linear sixth order equation \eqref{eq:QDD6intro}
is easily expressible in terms of $\theL_\lambda$, see Theorem \ref{thm:hessian} below.
A similar idea has been used in \cite{McS},
where the authors showed by exploiting the relation \eqref{eq:lessmagic} below
that the displacement Hessian for $\fish_\lambda$ is formally given by $\theL_\lambda^2$,
and discussed implications on the long-time asymptotics of the nonlinear fourth order DLSS \eqref{eq:DLSS} equations.

Our second main result is:
\begin{theorem}
  \label{thm:hessian}
  Given a test function $\psi\in C^\infty_c(\setR^d)$,
  let $u_\s$ be the solution of the transport equation
  \begin{align*}
    \partial_\s u_\s = -\dv(u_\s\nabla\psi) \quad \text{subject to the initial condition $u_0=U_\lambda$}.
  \end{align*}
  Then:
  \begin{align*}
  \frac{\dn^2}{\dd\s^2}\bigg|_{\s=0}\nrg_\lambda(u_\s) = \intrn \nabla\psi\cdot\nabla\big(\theL_\lambda^3\psi\big)U_\lambda\dd\leb + \lambda \intrn \nabla\psi\cdot\nabla\big(\theL_\lambda^2\psi\big)U_\lambda\dd\leb.
  \end{align*}
  Consequently, if $\nrg_\lambda$ possesses a displacement Hessian at $U_\lambda$,
  then it is an extension of the linear operator $\theL_\lambda^3+\lambda\theL_\lambda^2$.
\end{theorem}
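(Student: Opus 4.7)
The plan is to combine the identity \eqref{eq:justmagic} with the already-established displacement Hessian of $\ent_\lambda$ at $U_\lambda$, which by \cite{DMc} equals the operator $\theL_\lambda$. For each $\s$ in a neighbourhood of $0$, let $r\mapsto w_r^{(\s)}$ denote the solution of the linear Fokker-Planck equation \eqref{eq:LFP} starting from $u_\s$. Applied at $r=0$, relation \eqref{eq:justmagic} reads
\begin{equation*}
\nrg_\lambda(u_\s) - \nrg_\lambda(U_\lambda) = \frac{1}{4}\partial_r^2\big|_{r=0}\ent_\lambda(w_r^{(\s)}) - \frac{\lambda}{2}\partial_r\big|_{r=0}\ent_\lambda(w_r^{(\s)}).
\end{equation*}
Taking $\partial_\s^2|_{\s=0}$ of both sides and interchanging the $\partial_r$ and $\partial_\s$ derivatives (justified by smoothness of the two-parameter family on a neighbourhood of $(0,0)$ for $\psi\in C^\infty_c$), the task reduces to computing $B^{(j)}(0):=\partial_r^j|_{r=0}\,\partial_\s^2|_{\s=0}\ent_\lambda(w_r^{(\s)})$ for $j=1,2$. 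Note $u_0=U_\lambda$ being stationary for \eqref{eq:LFP} means $w_r^{(0)}\equiv U_\lambda$, so all pure $r$-derivatives at $\s=0$ vanish and the expansion in $\s$ is unambiguous.

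To evaluate $B(r):=\partial_\s^2|_{\s=0}\ent_\lambda(w_r^{(\s)})$, expand $\ent_\lambda$ to second order about $U_\lambda$. Since $\log U_\lambda + 1 + \tfrac{\lambda}{2}|x|^2$ is a \emph{constant} in $x$ and total mass is preserved, all first-order contributions drop out and
\begin{equation*}
B(r) = \intrn \frac{v_r^2}{U_\lambda}\dd x, \qquad v_r := \partial_\s w_r^{(\s)}\big|_{\s=0},
\end{equation*}
where $v_r$ solves the linearised Fokker-Planck equation with $v_0=-\dv(U_\lambda\nabla\psi)=U_\lambda\theL_\lambda\psi$. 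The key algebraic step is the intertwining relation $\mathbf{L}_\lambda^{\ast}(U_\lambda\phi)=U_\lambda\theL_\lambda\phi$, which is just detailed balance for \eqref{eq:LFP} with respect to $U_\lambda$ and implies that $v_r = U_\lambda\theL_\lambda\psi_r$, where $\psi_r$ evolves according to $\partial_r\psi_r=-\theL_\lambda\psi_r$. Combining this with the integration-by-parts identity $\intrn\nabla\phi_1\cdot\nabla\phi_2\,U_\lambda\dd x=\intrn\phi_1(\theL_\lambda\phi_2)U_\lambda\dd x$ (which is essentially the definition of $\theL_\lambda$) one rewrites
\begin{equation*}
B(r) = \intrn U_\lambda(\theL_\lambda\psi_r)^2\dd x = \intrn \nabla\psi_r\cdot\nabla(\theL_\lambda\psi_r)\,U_\lambda\dd x.
\end{equation*}

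Finally, differentiate $B$ in $r$. Applying the product rule, substituting $\partial_r\psi_r=-\theL_\lambda\psi_r$, and using self-adjointness of $\theL_\lambda$ in $L^2(U_\lambda\dd\leb)$ to shuffle powers of $\theL_\lambda$ between the two slots of the bilinear form, every term collapses to one of the type $\intrn\nabla\psi_r\cdot\nabla(\theL_\lambda^k\psi_r)U_\lambda\dd x$, yielding
\begin{equation*}
B'(0) = -2\intrn\nabla\psi\cdot\nabla(\theL_\lambda^2\psi)\,U_\lambda\dd x, \qquad B''(0) = 4\intrn\nabla\psi\cdot\nabla(\theL_\lambda^3\psi)\,U_\lambda\dd x.
\end{equation*}
Substituting into the first display gives the claimed identity, and the statement about the displacement Hessian follows immediately from its definition. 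The main difficulty is organisational: up to four applications of $\theL_\lambda$ appear in $B''(0)$, and one must carefully book-keep the signs and verify that the self-adjointness manipulations are legitimate for the (Schwartz, for every $r\ge 0$) curve $\psi_r=e^{-r\theL_\lambda}\psi$. Conceptually the proof is a direct analogue of the fourth-order DLSS computation in \cite{McS}, with the extra $r$-derivative in \eqref{eq:justmagic} (as compared to \eqref{eq:lessmagic}) accounting for the additional factor of $\theL_\lambda$ in the final answer.
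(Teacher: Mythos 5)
Your proof is correct and reaches the same answer, but it takes a genuinely different computational route from the paper's. The paper first rewrites $\nrg_\lambda(u)$ (via the expanded form \eqref{eq:magicF0}/\eqref{eq:magicF} of \eqref{eq:justmagic}) as explicit integrals involving $\log\frac{u}{U_\lambda}$, then takes $\partial_\s^2|_{\s=0}$ directly under those integrals and argues that the vanishing of $\log\frac{u_0}{U_\lambda}$ kills all but one term, which is then integrated by parts repeatedly until $\theL_\lambda^3$ emerges. You instead take $\partial_\s^2|_{\s=0}$ of the identity \eqref{eq:justmagic} \emph{keeping the $r$-dependence}, interchange the $r$- and $\s$-derivatives, and reduce everything to $B(r)=\partial_\s^2|_{\s=0}\ent_\lambda(w_r^{(\s)})$; the detailed-balance identification $\partial_\s w_r^{(\s)}|_{\s=0}=U_\lambda\theL_\lambda e^{-r\theL_\lambda}\psi$ then lets you write $B(r)=\intrn\nabla\psi_r\cdot\nabla(\theL_\lambda\psi_r)U_\lambda\dd x$ with $\psi_r=e^{-r\theL_\lambda}\psi$, and the two $r$-derivatives become pure algebra in the self-adjoint operator $\theL_\lambda$. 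The paper's route is more hands-on (explicit integration by parts on the final surviving term, and one must carefully inspect which terms of $\partial_\s^2$ vanish), while yours trades that for the derivative-interchange step and the semigroup identification; yours is more conceptual and directly mirrors the ODE toy model in Section \ref{sct:illustration}, but it requires justifying smoothness of the two-parameter family $w_r^{(\s)}$ near $(r,\s)=(0,0)$ — which is fine at the formal level the paper itself works at, since $u_\s$ is already a smooth positive density. One small notational caution: the relation you call detailed balance is actually $\Delta(U_\lambda\phi)+\lambda\dv(xU_\lambda\phi)=\dv(U_\lambda\nabla\phi)=-U_\lambda\theL_\lambda\phi$, with a minus sign; your downstream computation is nonetheless consistent because $\partial_r\psi_r=-\theL_\lambda\psi_r$ absorbs it.
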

Theorem \ref{thm:hessian} could be proven by direct calculations,
evaluating the second variational derivative of $\nrg_\lambda$ along the transport equation,
and then integrating by parts until the desired form is attained.
This would be tedious but in principle straight-forward
once the desired terminal form $\theL_\lambda^3+\lambda\theL_\lambda^2$ is known.
Here we present a more conceptual approach, based on the relation \eqref{eq:justmagic},
which leads us to the form of the Hessian in the first place.

As said before, the implications of Theorem \ref{thm:hessian} on the long-time asymptotics of \eqref{eq:QDD6intro} are far from obvious.
Naturally, the hope is that the dynamics of all sufficiently smooth solutions $u$ close to equilibrium
is approximately the same as that of the linearized equation,
and in particular, that the eigenvalues of $\theL_\lambda^3 + \lambda\theL_\lambda^2$ determine
the exponential decay of $u$'s ``nonlinear modes'' in the long-time limit.
It is not hard to see that the formal differential operator $\theL_\lambda^3 + \lambda\theL_\lambda^2$ possesses
a unique self-adjoint closure in $W^{1,2}(\setR^d;U_\lambda\dd\leb)$,
and that the spectrum of the latter is purely discrete with eigenvalues $\lambda^3(k^3+k^2)$ for $k=0,1,2,\ldots$

The strong results from \cite{MMS},
where the fully nonlinear exponential stability of the Gaussian steady state for the DLSS equation \eqref{eq:DLSS}
has been proven on grounds of \eqref{eq:lessmagic},
give rise to a conjecture, namely that the spectral gap $2\lambda^3$ in the linearization
actually determines the global rate of convergence to equilibrium.
\begin{conjecture}
  \label{cjt:two}
  The weak solutions to \eqref{eq:QDD6intro} constructed in the course of the proof of Theorem \ref{thm:existence} above
  converge to $U_\lambda$ in $L^1(\setR^d)$ at an exponential rate of $\lambda^3$,
  i.e., there exists a constant $C(u^0)$ that is expressible in terms of the initial datum $u^0$ alone
  such that
  \begin{align}
    \label{eq:5}
    \|u(t,\cdot)-U_\lambda\|_{L^1(\setR^d)} \le C(u^0)e^{-\lambda^3t} \quad\text{for all $t\ge0$}.
  \end{align}
\end{conjecture}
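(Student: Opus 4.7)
The plan is to adapt the entropy-dissipation method of \cite{MMS}, developed there for the fourth-order DLSS equation \eqref{eq:DLSS}, to the present sixth-order gradient flow \eqref{eq:QDD6intro}. The argument would proceed in three steps: first, lift the linearized spectral gap from Theorem \ref{thm:hessian} to a global functional inequality expressing exponential dissipation of $\nrg_\lambda$; second, chain this into exponential decay of the relative entropy $\ent_\lambda(u)-\ent_\lambda(U_\lambda)$; third, invoke the Csisz\'ar--Kullback--Pinsker inequality $\|u-U_\lambda\|_{L^1(\setR^d)}^2\le 2(\ent_\lambda(u)-\ent_\lambda(U_\lambda))$ to turn entropy decay into $L^1$ decay at half the rate. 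The spectral gap $2\lambda^3$ of $\theL_\lambda^3+\lambda\theL_\lambda^2$ leaves enough slack for the target rate $\lambda^3$ appearing in \eqref{eq:5}, so some loss of constants along the way is acceptable.

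For the first step I would aim for a Polyak--\L{}ojasiewicz inequality
\begin{equation*}
  \nrg_\lambda(u)-\nrg_\lambda(U_\lambda) \;\le\; \frac{1}{4\lambda^3}\,\diss(u),
  \qquad \diss(u):=\intrn u\big|\nabla\vpot(u)+2\lambda^3 x\big|^2\dd x,
\end{equation*}
where $\diss(u)$ is exactly the Wasserstein dissipation of $\nrg_\lambda$ along \eqref{eq:QDD6intro}, and the constant $4\lambda^3$ is the sharp one dictated by the smallest positive eigenvalue $2\lambda^3$ of the displacement Hessian. Combined with the energy identity $\frac{\dn}{\dn t}\nrg_\lambda(u(t))=-\diss(u(t))$, this would yield $\nrg_\lambda(u(t))-\nrg_\lambda(U_\lambda)\le e^{-4\lambda^3 t}(\nrg_\lambda(u_0)-\nrg_\lambda(U_\lambda))$. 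For the second step, the bound $\ent_\lambda(u)-\ent_\lambda(U_\lambda)\le C(\nrg_\lambda(u)-\nrg_\lambda(U_\lambda))$ should be derivable by integrating the identities \eqref{eq:lessmagic} and \eqref{eq:justmagic} along the linear Fokker--Planck flow \eqref{eq:LFP} that connects $u$ to $U_\lambda$ as $r\to\infty$, and closing the boundary contribution at $r=\infty$ by the classical logarithmic Sobolev inequality for the Gaussian $U_\lambda$.

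The principal obstacle is the Polyak--\L{}ojasiewicz inequality in the first step. The cleanest conceivable route, namely $2\lambda^3$-displacement convexity of $\nrg_\lambda$ along Wasserstein geodesics, is in all likelihood out of reach in $\dimens\ge 2$, since displacement convexity of second-order information functionals is notoriously delicate and may well fail. A more realistic alternative, in the spirit of \cite{MMS,McS}, is to work at the time-discrete level of the minimizing movement scheme \eqref{eq:MMintro}: flow interchange with the heat flow, combined with the intertwined identities \eqref{eq:lessmagic}--\eqref{eq:justmagic}, should produce a one-step inequality of the form $\ent_\lambda(u_\tau^n)-\ent_\lambda(u_\tau^{n-1})\le -c\tau\,(\ent_\lambda(u_\tau^n)-\ent_\lambda(U_\lambda))+O(\tau^2)$, whose telescoping and the passage $\tau\downarrow 0$ would deliver exponential decay of the entropy. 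The sticking point is the control of the $O(\tau^2)$ remainder: the a priori regularity provided by \eqref{eq:apriori} is substantially weaker than what was available for DLSS, so one may well be forced to run a parallel flow-interchange bookkeeping for $\fish_\lambda$, or to exploit the peculiar algebraic structure of \eqref{eq:justmagic} more carefully, before the telescope can be summed rigorously.
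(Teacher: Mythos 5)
The statement you are asked to prove is Conjecture~\ref{cjt:two}, which the paper itself explicitly leaves open --- it is introduced with the words ``the strong results from \cite{MMS} \ldots give rise to a conjecture'', and the authors later remark that a rigorous proof of even the weaker higher-order asymptotics ``is currently out of reach.'' There is therefore no proof in the paper to compare against, and your proposal cannot close the gap either, as you candidly acknowledge.

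Your strategy --- transfer the linearized spectral gap of $\theL_\lambda^3+\lambda\theL_\lambda^2$ into a global Polyak--\L{}ojasiewicz/entropy-production inequality, chain it through \eqref{eq:lessmagic} and \eqref{eq:justmagic} to exponential decay of $\ent_\lambda$, and finish with Csisz\'ar--Kullback--Pinsker --- is exactly the route that the MMS proof of exponential stability for the DLSS equation \eqref{eq:DLSS} suggests, and it is almost certainly what the authors had in mind when posing the conjecture. The rates you track are consistent: a spectral gap of $2\lambda^3$ would give $\nrg_\lambda$-decay at rate $4\lambda^3$, hence entropy decay at the same rate and $L^1$ decay at $2\lambda^3$, with room to spare over the target $\lambda^3$. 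But neither of the two load-bearing steps is established. The functional inequality $\nrg_\lambda(u)-\nrg_\lambda(U_\lambda)\le\frac{1}{4\lambda^3}\diss(u)$ would follow from $2\lambda^3$-displacement convexity of $\nrg_\lambda$, which you yourself flag as likely false (or at least unreachable) for $d\ge2$; and the displacement Hessian computed in Theorem~\ref{thm:hessian} is a purely \emph{pointwise} statement at $U_\lambda$ that gives no control away from equilibrium. The discrete-time fallback via flow interchange fares no better: \eqref{eq:magicF} expresses $\nrg_\lambda-\nrg_\lambda(U_\lambda)$ in terms of the dissipation of $\fish_\lambda$ along the heat flow, not along the sixth-order flow itself, and the a~priori regularity \eqref{eq:apriori} of the minimizing-movement iterates does not permit a rigorous differentiation of $\ent_\lambda$ twice along the heat flow starting from $u_\tau^n$ with uniform remainder control. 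You identify both obstructions yourself. What you have written is a well-reasoned research programme, not a proof, and the conjecture remains open.

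One additional point worth keeping in mind for any future attempt: the theorem only assumes $\ent_0(u_0)<\infty$ and finite second moment, so $\nrg_\lambda(u_0)$ may be $+\infty$; the constant in \eqref{eq:5} would have to be rephrased using the instantaneous regularization $\nrgm_\lambda(\mu_\tau^N)\le\ebnd(1+(N\tau)^{-2/3})$ from Proposition~\ref{prp:universal}, running the exponential estimate from a fixed $t_0>0$ onward and absorbing $[0,t_0]$ into the constant.
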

A direct consequence of Conjecture \ref{cjt:two} would be $u$'s \emph{asymptotic self-similarity} in case $\lambda=0$.
More precisely, in Section \ref{sct:selfsim} we show that if Conjecture \ref{cjt:two} were true,
then any solution $u$ to \eqref{eq:QDD6intro} with $\lambda=0$ approaches in $L^1(\setR^d)$ with algebraic rate $t^{-1/6}$
the self-similar solution
\begin{align}
  \label{eq:selfsim}
  u_*(t;x) = [1+12t]^{-d/6}U_1\big([1+12t]^{-1/6}x\big)
\end{align}
at least if $u$ is already sufficiently close to self-similarity initially.

\bigskip

\paragraph{\bf Outline}
After explaining the origin and relevance of equation (\ref{eq:QDD6intro}) in Section \ref{sct:derivation},
we introduce commonly known background
regarding the $L^2$-Wasserstein space and metric gradient flows in Section \ref{sct:background}.
Section \ref{sct:existence} is then devoted to the existence proof,
while Section \ref{sct:longtime} deals with formal results for the intermediate respectively long time behaviour
of those obtained solutions.

\paragraph{\bf Notation}
 The euclidean norm is denoted by $\norms{\cdot}$, while $\norm{\cdot}$ is given by $\norm{A}^2 = \sum_{i,j=1}^{\dimens} a_{ij}^2$ for $A = (a_{ij}) \in \mathbb{R}^{\dimens \times \dimens}$ and $\interleave\mathbb{B}\interleave^2 = \sum_{i,j,k=1}^{\dimens} b_{ijk}^2$ for $\mathbb{B} = (b)_{ijk} \in \mathbb{R}^{\dimens \times \dimens \times \dimens}$. The domain $\Dom{{\mathcal{G}}}$ of a functional $\mathcal{G}$ defined on a set $X$ consists of all $u \in X$ such that $\mathcal{G}(u) < \infty$.


\section{Derivation and Preliminaries}
\label{sct:prelims}

\subsection{Derivation of the equation}
\label{sct:derivation}
  We sketch the derivation of \eqref{eq:QDD6intro} from a quantum mechanical model.
  In \cite{DegMeRi05} --- building on \cite{DegRing} --- the following
  non-linear and non-local quantum analogue of the classical Fokker-Planck equation \eqref{eq:LFP}
  has been derived:
  \begin{align}
    \label{eq:QDD}
    \partial_tu = \dv\left(u\,\nabla \left(A[u]+\frac\lambda2|x|^2\right)\right).
  \end{align}
  Here $u$ is the macroscopic density of quantum particles
  whose dynamics aims at minimizing the ensemble's relative von Neumann entropy $\vN_\lambda$.
  The precise definition of $A[u]$, sometimes referred to as $u$'s quantum logarithm, is intricate;
  for the sake of completeness, we mention one possible definition of $A[u]$ as
  the uniquely determined potential $A:\setR^d\to\setR$ such that
  \begin{align*}
    \operatorname{Tr}\left[\varphi\,\exp\left(-\frac{\hbar^2}2\Delta+A\right)\right]
    = \int_{\setR^d}\varphi(x)u(x)\dd x
  \end{align*}
  for all test functions $\varphi\in C^\infty_c(\setR^d)$.
  Here $\operatorname{Tr}$ denotes the trace over $L^2(\setR^d)$,
  and $\exp$ is the exponential of a self-adjoint operator.
  
  Under the specific hypotheses made in \cite{DegMeRi05},
  the von Neumann entropy can be expressed in terms of the macroscipic density $u$ alone,
  and is given by
  \[ \vN_\lambda(u)=\int_{\setR^d}u\left(A[u]+\frac\lambda2|x|^2\right)\dd x.\]
  We remark that $\vN_\lambda(u)$'s variational derivative is $A[u]+\frac\lambda2|x|^2$,
  and thus equation \eqref{eq:QDD} has the formal structure of a gradient flow in $\wass$.
  In the semi-classical limit $\hbar\to0$,
  the von Neumann entropy $\vN_\lambda(u)$ formally approaches the Boltzmann entropy $\ent_\lambda(u)$,
  and the variational derivative $A[u]$ formally approaches the pointwise logarithm $\log u$.
  Consequently, \eqref{eq:QDD} turns into the classical Fokker-Planck equation.

The existence analysis for the full equation \eqref{eq:QDD} goes far beyond classical parabolic theory,
and has been carried out just recently, and only in one space dimension \cite{Pi19}.
Already the rigorous definition of $A[u]$ as solution to an inverse problem has been challenging \cite{MePi10}.
A way to approximate \eqref{eq:QDD} by \emph{local} equations is via the expansion of $A[u]$ in powers of the small parameter $\hbar$.
In \cite[Appendix]{BuJuMa11} the following asymptotic expansion up to $\mathcal{O}(\hbar^6)$
has been computed (for $\lambda=0$):
\begin{equation*}
  A = \underbrace{\log u}_{=: A_0[u]}
  + \frac{\hbar^2}{12} \underbrace{\left( -2 \frac{\Delta \sqrt{u}}{\sqrt{u}} \right)}_{ =: A_1[u]}
  + \frac{\hbar^4}{360} \underbrace{\left(\frac{1}{2} \norm{\nabla^2 \log u}^2+ \frac{1}{u} \nabla^2 : (u \nabla^2 \log u) \right)}_{=: A_2[u]}
  + \mathcal{O}(\hbar^6).
\end{equation*}
As mentioned above, a reduction of $A[u]$ in \eqref{eq:QDD} to the leading order term $A_0[u]$
yields the linear Fokker-Planck (or rather: heat --- since $\lambda=0$) equation \eqref{eq:LFP},
\begin{align*}
  \partial_t u = \dv(u\,\nabla A_0[u]) = \Delta u.
\end{align*}
Replacing $A[u]$ by $A_1[u]$ leads to the Derida-Lebowitz-Speer-Spohn (DLSS) equation
\begin{align*}
  \partial_t u = \dv(u\nabla A_1[u]) =  -\nabla^2:(u\nabla^2\log u).
\end{align*}
Finally, since $A_2[u]$ is identical 
 to the functional $\Phi(u)$,
the equation $\partial_t u = \dv(u\nabla A_2[u])$ coincides
with the sixth-order equation \eqref{eq:QDD6intro} under consideration here.


\subsection{Background for Wasserstein gradient flows}
\label{sct:background}
In this section we briefly review some basics about the theory of optimal transport and $L^2$-Wasserstein gradient flows,
but only as far as it is needed later.
For a more profound introduction to these topics, we refer to the text books \cite{AGS,Santa,Vi03}.
By $\propo$ we denote all probability measures with finite second moment,
\begin{equation*}
  \moment_2(\mu) = \int_{\rn}|x|^2 \dd\mu(x) < \infty.
\end{equation*}
We shall frequently identify absolutely continuous (with respect to the Lebesgue measure $\leb$)
probability measures $\mu$ on $\setR^d$ with their densities $u\in L^1(\setR^d)$.
A sequence $(\mu_n)_{n \in \mathbb{N}} \subset \prop$ of probability measures converges \emph{narrowly} to $\rho \in \prop$
if
\begin{align*}
  \lim\limits_{n \rightarrow \infty} \int_{\rn}f(x) \dd\mu_n(x) = \int_{\rn}f(x) \dd\mu(x)
\end{align*}
holds for all bounded, continuous functions $f: \rn \rightarrow \mathbb{R}$.
The $L^2$-Wasserstein distance between two measures $\mu, \nu \in \propo$ is defined via 
\begin{equation}
  \label{eq:Wassersteindist1}
  \wasser{\mu}{\nu} = \min_{\pi \in \Pi(\mu, \nu)} \int_{\rn\times\rn} |x-y|^2 \dd\pi(x,y),
\end{equation}
where $\Pi(\mu, \nu)$ denotes the set of all transport plans between $\mu$ and $\nu$,
that is all $\pi\in \mathcal{P}(\rn \times \rn)$ with respective marginals $\mu$ and $\nu$.
The Wasserstein distance metrizes narrow convergence on $\propo$ and is itself lower semi-continuous with respect to that convergence.

We are not going to define metric gradient flows on $(\propo,\wassernoq\cdot\cdot)$ in general.
Here we just need a particularly nice subclass.
\begin{definition}
  Let $\aux:\propo\to\setR\cup\{+\infty\}$ be a proper, lower semi-continuous functional.
  Further, let a semi-group $(\flow_s)_{s\ge0}$ of continuous maps $\flow_s$ on $\propo$ be given.
  We call the semi-group an \emph{$\alpha$-flow for $\aux$}
  if the following \emph{evolutionary variational inequality} holds at any $s\ge0$ and with any $\mu,\nu\in\Dom\aux$:
  \begin{align}
    \label{eq:evi}
    \frac{1}{2} \frac{\dn^+}{\dd s} \wasser{\flow_s(\mu)}{\nu}
    + \frac{\alpha}{2}  \wasser{\flow_s(\mu)}{\nu}
    \le \aux(\nu) - \aux(\flow_s(\mu)).
    \end{align}
\end{definition}
\begin{example}
  \label{xmp:flow}
  The following three examples of $\alpha$-flows will be important in the following.
  They are both special cases of \cite[Example 11.2.7]{AGS}.
  \begin{enumerate}
  \item \emph{The linear heat flow}, given as distributional solution to $\partial_s\mu=\Delta\mu$,
    is a $0$-flow for the entropy $\aux=\ent_0$.
  \item \emph{The linear mass transport}, given as distributional solution to $\partial_s\mu=\dv(\mu\nabla V)$
    for a potential $V\in C^2(\setR^d)$ with globally bounded second derivatives,
    is an $\alpha$-flow for the potential energy $\aux(\mu) = \intrn V(x)\dd\mu(x)$,
    for every $\alpha$ such that $\nabla^2V\ge\alpha\eins$.
  \item \emph{The rescaling}, given as distributional solution to $\partial_s\mu=\dv(x\,\mu)$,
    is an $1$-flow for the potential energy $\aux=\frac12\mom$.
  \end{enumerate}
\end{example}
  Solutions to \eqref{eq:QDD6intro} will be constructed via discrete-in-time approximation
  by means of the minimizing movement scheme, i.e., a variational Euler method,
  see Section \ref{sct:mm} below.
  Inductively, the approximation $\mu_\tau^n$ at time $t=n\tau$ is obtained from $\mu_\tau^{n-1}$, the one at $t=(n-1)\tau$,
  as minimizer in
  \begin{align}
    \label{eq:mm-preview}
    \mu\mapsto \frac1{2\tau}\wass\big(\mu,\mu_\tau^{n-1}\big)^2+ \nrgm_\lambda(\mu).
  \end{align}
  For passage to the continuous limit, a priori estimates independent of the time step $\tau$ are essential.
  The key is to give a rigorous meaning to a priori estimates
  related to dissipations $-\frac{\dn}{\dd t}\aux(\mu_t)$ of auxiliary functionals $\aux$
  already on the time-discrete level.
  For this, we shall use the so-called flow interchange method \cite[Theorem 3.2]{MMS},
  where the minimizer $\mu_\tau^n$ is perturbed along the $\alpha$-flow $\flow^\aux_{(\cdot)}$ of the auxiliary functional $\aux$.
\begin{lemma}[Flow Interchange]
  \label{lem:flowinterchange}
  Let $\fnc,\aux: \propo \rightarrow \mathbb{R}\cup\{ + \infty \}$ be
  two proper, lower semicontinuous functionals with $\Dom{\fnc} \subseteq \Dom{\aux}$.
  Assume further that $\aux$ produces an $\alpha$-flow $\flow^{\aux}_{(\cdot)}$.
  Let $\mu^*$ be a global minimizer of the following Yosida-penalization of $\fnc$,
  \begin{align}\label{eq:yosida}
    \mu\mapsto\frac1{2\tau}\wasser{\mu}{\bar\mu} + \fnc(\mu),
  \end{align}
  where $\bar\mu$ is given.
  Then
  \begin{align}
    \label{eq:flowinterchange}
    \limsup_{\s\downarrow0}\frac{\fnc(\mu^*)-\fnc(\flow^\aux_\s(\mu^*))}{\s}
    \le \frac{\aux(\bar\mu)-\aux(\mu^*)}\tau
    -\frac\alpha{2\tau}\wasser{\mu^*}{\bar\mu}.
  \end{align}
\end{lemma}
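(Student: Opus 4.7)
The plan is to combine the defining minimality of $\mu^*$ with the evolutionary variational inequality \eqref{eq:evi} for the $\alpha$-flow $\flow^\aux_{(\cdot)}$. The argument is essentially a one-line rearrangement once these two ingredients are lined up.

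First, I would use $\flow^\aux_s(\mu^*)$ as a competitor in the minimization \eqref{eq:yosida}. By minimality of $\mu^*$, for every $s > 0$,
\[
\frac{1}{2\tau}\wasser{\mu^*}{\bar\mu} + \fnc(\mu^*) \le \frac{1}{2\tau}\wasser{\flow^\aux_s(\mu^*)}{\bar\mu} + \fnc(\flow^\aux_s(\mu^*)),
\]
which rearranges to
\[
\frac{\fnc(\mu^*) - \fnc(\flow^\aux_s(\mu^*))}{s} \le \frac{1}{2\tau s}\left[\wasser{\flow^\aux_s(\mu^*)}{\bar\mu} - \wasser{\mu^*}{\bar\mu}\right].
\]
If $\fnc(\flow^\aux_s(\mu^*)) = +\infty$ for some small $s > 0$, then the left-hand side is $-\infty$ and the claimed bound is trivially true for that $s$; otherwise the rearrangement is legitimate. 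Passing to $\limsup_{s\downarrow 0}$ converts the right-hand side into $(2\tau)^{-1}$ times the upper right Dini derivative at $s = 0$ of $s \mapsto \wasser{\flow^\aux_s(\mu^*)}{\bar\mu}$.

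Second, I would invoke \eqref{eq:evi} at $s = 0$ with starting measure $\mu^*$ and target $\bar\mu$, which reads
\[
\frac{1}{2}\frac{\dn^+}{\dd s}\bigg|_{s=0}\wasser{\flow^\aux_s(\mu^*)}{\bar\mu} + \frac{\alpha}{2}\wasser{\mu^*}{\bar\mu} \le \aux(\bar\mu) - \aux(\mu^*).
\]
Solving for the Dini derivative, inserting into the first estimate and multiplying by $\tau^{-1}$ yields exactly \eqref{eq:flowinterchange}.

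No serious obstacle is expected. The only point that requires a brief comment is the applicability of the EVI: one needs $\mu^* \in \Dom{\aux}$, which is furnished by the standing assumption $\Dom{\fnc} \subseteq \Dom{\aux}$ together with the finiteness of $\fnc(\mu^*)$ at the minimizer. The case $\bar\mu \notin \Dom{\aux}$ is not an obstruction, since then the right-hand side of \eqref{eq:flowinterchange} equals $+\infty$ and nothing is to be proven.
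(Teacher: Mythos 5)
Your proof is correct and follows the same two-step argument as the paper's own sketch: first use $\flow^\aux_s(\mu^*)$ as a competitor in the Yosida minimization to bound the difference quotient of $\fnc$ by that of $\wasser{\cdot}{\bar\mu}$, then apply the EVI \eqref{eq:evi} at $s=0$ to control the latter. Your additional remarks on the edge cases ($\fnc(\flow^\aux_s(\mu^*))=+\infty$, $\bar\mu\notin\Dom\aux$, and the applicability of the EVI via $\Dom\fnc\subseteq\Dom\aux$) are sensible clarifications of points the paper leaves implicit.
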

  \begin{proof}[Sketch of proof]
    On the one hand, since $\mu^*$ is the minimizer in \eqref{eq:yosida},
    we have for each $\s>0$ that
    \begin{align}
      \label{eq:fi-help1}
      \frac1{\s}\big[\fnc(\mu^*)-\fnc(\flow^\aux_\s(\mu^*))\big]
      \le \frac1{2\tau\s}\big[\wass^2(\flow^\aux_\s(\mu^*),\bar\mu)-\wass^2(\mu^*,\bar\mu)\big].
    \end{align}
    On the other hand, by the EVI \eqref{eq:evi} at $s=0$,
    \begin{align}
      \label{eq:fi-help2}
      \limsup_{\s\downarrow0}\frac1{2\tau\s}\big[\wass^2(\flow^\aux_\s(\mu^*),\bar\mu)-\wass^2(\mu^*,\bar\mu)\big]
      \le \frac1{\tau} \big[\aux(\bar\mu)-\aux(\mu^*)\big]
      -\frac\alpha{2\tau}\wasser{\mu^*}{\bar\mu}.
    \end{align}
    Combining \eqref{eq:fi-help1} with \eqref{eq:fi-help2} yields \eqref{eq:flowinterchange}.
  \end{proof}
  \begin{remark}
    The left-hand side in \eqref{eq:flowinterchange} is an approximation of $\fnc$'s dissipation along $\aux$'s flow.
    At least on a formal level, one expects that it is also an approximation to $\aux$'s dissipation along $\fnc$'s flow,
    i.e., a time-discrete analogue of $-\frac{\dn}{\dd t}\aux(\mu_t)$.
    Indeed, in a corresponding smooth situation,
    with a map $x_{(\cdot,\cdot)}:\setR\times\setR\to\setR^n$,
    that is a gradient flow with respect to two ``time'' parameters $s$ and $t$,
    \begin{align*}
      \partial_tx_{(s,t)}=-\nabla U(x_{(s,t)}),
      \quad
      \partial_sx_{(s,t)}=-\nabla V(x_{(s,t)}),
    \end{align*}
    for smooth functions $U,V:\setR^n\to\setR$, we have
    \begin{align*}
      -\frac{\dn}{\dd t}V(x_{s,t}) = \nabla U(x_{(s,t)})\cdot\nabla V(x_{(s,t)}) = -\frac{\dn}{\dd s}U(x_{s,t}).
    \end{align*}
    In the non-smooth situation at hand, the two dissipations might not be identical,
    but typically, one can be controlled in terms of the other.
  \end{remark}

        
\section{Existence of Solutions}
\label{sct:existence}

\subsection{Properties of the energy functional}
We begin by giving a proper definition of the energy functional.
\begin{definition}
  The energy functional $\nrgm_\lambda:\propo\to\setR_{\ge0}\cup\{+\infty\}$ is defined as follows:
  if $\mu=u\leb$ is absolutely continuous with $\sqrt u\in W^{2,2}(\setR^d)$,
  then
  \begin{align}
    \label{eq:nrgm}
    \nrgm_\lambda(\mu):=
    2\intrn \big\|\nabla^2\sqrt u-4\nabla\sqrt[4]u\otimes\nabla\sqrt[4]u\big\|^2\dd x
    +\lambda^3\intrn|x|^2\dd\mu(x);
  \end{align}
  otherwise, $\nrgm_\lambda(\mu):=+\infty$.
\end{definition}
\begin{remark}
	\label{rem:nrgm}
  Several comments are in order.
  \begin{itemize}
  \item For well-definedness of the right-hand side in \eqref{eq:nrgm},
    we implicitly use the fact that $\sqrt u\in W^{2,2}(\setR^d)$ implies $\sqrt[4]u\in W^{1,4}(\setR^d)$,
    see e.g. \cite[Th\'{e}or\'{e}me 1(ii)]{LiVi}.
    Actually, there is a constant $C$ such that $\|\nabla\sqrt[4]u\|_{L^4}^2\le C\|\nabla^2\sqrt u\|_{L^2}$,
    and thus,
    \begin{align}
      \label{eq:nrgabove}
      \nrgm_0(\mu) \le C'\|\nabla^2\sqrt u\|_{L^2}^2.
    \end{align}
  \item The condition $\sqrt u\in W^{2,2}(\setR^d)$
    is an essential part of the definition.
    Note that in principle, cancellation effects inside the norm under the integral in \eqref{eq:nrgm}
    could lead to a finite integral value despite $\sqrt u\notin W^{2,2}(\setR^d)$.
    Our choice of $\nrgm_\lambda$'s domain is justified by Propositions \ref{prp:savarebnd} and \ref{prp:savarelsc} below.
  \item If $\mu$'s density $u$ is positive everywhere,
    then one has
    \[ \nabla^2\sqrt u-4\nabla\sqrt[4]u\otimes\nabla\sqrt[4]u=\frac12\sqrt u\,\nabla^2\log u, \]
    and so $\nrgm_\lambda(\mu)=\nrg_\lambda(u)$ with $\nrg_\lambda$ defined in \eqref{eq:HFE}.
  \item The authors of \cite{GST} consider a slightly different  version $\mathfrak{K}_{-1}(\cdot|\leb)$ of $\nrg_0$:
    assuming $\mu=u\leb$ satisfies the weaker condition $\sqrt u\in W^{2,2}_\loc(\setR^d)$, they set
    \begin{align}
      \label{eq:savareI}
      \mathfrak{K}_{-1}(\mu|\leb)
      = \intrn \left\|\frac{\sqrt u\nabla^2\sqrt u-\nabla\sqrt u\otimes\nabla\sqrt u}{\sqrt u}\right\|^2\dd\mu(x).
    \end{align}
    Since the zero set of $u$ is clearly $\mu$-negligible, the integrand is $\mu$-a.e. well-defined.
    For $\sqrt u\in W^{2,2}(\setR^d)$, one has $\mathfrak{K}_{-1}(\mu|\leb)=\nrgm_0(\mu)$;
    this is a consequence of the fact that the derivatives of Sobolev functions are zero $\leb$-a.e. on their level sets,
    i.e., the integrand in \eqref{eq:nrgm} vanishes a.e. on $\{u=0\}$.
    The representation in \eqref{eq:savareI} is the relevant one in the proof of lower semi-continuity,
    see Proposition \ref{prp:savarelsc} below.
    For our later needs, the representation \eqref{eq:nrgm} is better suited.
  \end{itemize}
\end{remark}
The energy $\nrgm_\lambda$ defined above is part of a family of second-order functionals
that have been studied in \cite[Section 3]{GST} in connection with the DLSS equation \eqref{eq:DLSS}.
We recall and adapt two results here.
\begin{proposition}[adapted from Corollary 3.2 in \cite{GST}]
  \label{prp:savarebnd}
  There is a constant $C$, only depending on the dimension $d$,
  such that for all absolutely continuous $\mu=u\leb\in\propo$:
  \begin{align}
    \label{eq:savarebnd}
    \intrn\big(\big\|\nabla^2\sqrt u\big\|^2+\big|\nabla\sqrt[4]u\big|^4\big)\dd x\le C\nrgm_\lambda(\mu).
  \end{align}
\end{proposition}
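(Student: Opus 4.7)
The plan is to drop the manifestly nonnegative moment term $\lambda^3\int|x|^2\dd\mu$ and to work with $s=\sqrt u$ and $z=\sqrt[4]u$, exploiting the algebraic identity $s=z^2$. A direct chain rule gives $\nabla^2 s=2z\nabla^2z+2\nabla z\otimes\nabla z$, hence
\begin{align*}
\nabla^2 s-4\nabla z\otimes\nabla z=2L,\qquad L:=z\nabla^2 z-\nabla z\otimes\nabla z.
\end{align*}
Thus $\nrgm_\lambda(\mu)\ge 8\int\|L\|^2\dd x$, and the result will follow once I show $\int\|\nabla^2 s\|^2+|\nabla z|^4\dd x\le C\int\|L\|^2\dd x$. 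The bound on $\int\|\nabla^2 s\|^2$ reduces, via $\nabla^2 s=2L+4\nabla z\otimes\nabla z$ and the elementary inequality $\|A+B\|^2\le 2\|A\|^2+2\|B\|^2$, to the bound on $\int|\nabla z|^4$.

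To control $\int|\nabla z|^4$, I would use the divergence identity
\begin{align*}
\dv\bigl(|\nabla z|^2z\nabla z\bigr)=|\nabla z|^4+2z\,(\nabla z)^{\!\top}\nabla^2z(\nabla z)+z|\nabla z|^2\Delta z,
\end{align*}
together with the substitutions $z(\nabla z)^{\!\top}\nabla^2 z(\nabla z)=(\nabla z)^{\!\top}L(\nabla z)+|\nabla z|^4$ and $z\Delta z=\operatorname{tr}(L)+|\nabla z|^2$, which come directly from the definition of $L$. Integrating over $\setR^d$, the divergence term vanishes and collecting terms yields
\begin{align*}
4\intrn|\nabla z|^4\dd x=-2\intrn(\nabla z)^{\!\top}L(\nabla z)\dd x-\intrn|\nabla z|^2\operatorname{tr}(L)\dd x.
\end{align*}
The right-hand side is bounded via $|\operatorname{tr}(L)|\le\sqrt d\,\|L\|$ and Cauchy--Schwarz by $(2+\sqrt d)\bigl(\int|\nabla z|^4\bigr)^{1/2}\bigl(\int\|L\|^2\bigr)^{1/2}$. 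Squaring and rearranging gives $\int|\nabla z|^4\le\tfrac{(2+\sqrt d)^2}{16}\int\|L\|^2$, which is exactly the missing estimate; combining with the $\|\nabla^2 s\|^2$ bound above produces the claim with an explicit $C=C(d)$.

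The step I expect to cost the most care is the justification of the global integration by parts that makes the divergence term drop out: $z=u^{1/4}$ need not be smooth or compactly supported, and $u$ may vanish. The natural remedy is a cutoff/approximation argument. Using $\sqrt u\in W^{2,2}(\setR^d)$ together with the Li--Villani inequality $\|\nabla\sqrt[4]u\|_{L^4}^2\le C\|\nabla^2\sqrt u\|_{L^2}$ cited in Remark \ref{rem:nrgm}, one has $|\nabla z|^2 z\nabla z\in L^1(\setR^d)$ by H\"older (since $z\in L^4$ from $u\in L^1$), so testing the divergence identity against a radial cutoff $\eta_R$ and sending $R\to\infty$ kills the boundary contribution. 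Since the statement and setup match \cite[Corollary 3.2]{GST} almost verbatim, this approximation argument can be taken over directly from that reference, and the only adaptation needed is the passage from their functional $\mathfrak K_{-1}(\,\cdot\,|\leb)$ to our $\nrgm_\lambda$, which is immediate under the hypothesis $\sqrt u\in W^{2,2}(\setR^d)$ as noted in Remark \ref{rem:nrgm}.
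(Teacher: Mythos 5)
Your proof is correct and follows the same route the paper (and the cited \cite{GST} result) takes: the estimate reduces to controlling $\int|\nabla z|^4$ via an integration by parts with the vector field $|\nabla z|^2 z\nabla z=\tfrac12|\nabla\sqrt[4]u|^2\nabla\sqrt u$, which is precisely the vector field discussed after Lemma \ref{lem:Gauss} in the Appendix, and your derived identity $4\int|\nabla z|^4=2\int\nabla z\cdot\nabla^2 s\cdot\nabla z+\int\Delta s\,|\nabla z|^2$ (after rewriting $L=\tfrac12(\nabla^2 s-4\nabla z\otimes\nabla z)$) is exactly the one recorded there. Your algebra, the Cauchy--Schwarz absorption, and the justification of the global divergence theorem via Lemma \ref{lem:Gauss} and the assumption $\sqrt u\in W^{2,2}(\setR^d)$ (which may be assumed since otherwise $\nrgm_\lambda(\mu)=+\infty$) all check out.
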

It is important to remark here that \cite[Corollary 3.2]{GST} indeed applies to the case $\Omega=\setR^d$:
the derivation of \eqref{eq:savarebnd} only involves an integration by parts
with the vector field $\velo=|\nabla\sqrt[4]u|^2\nabla\sqrt u$,
which is integrable on $\setR^d$ since $\sqrt u\in W^{2,2}(\setR^d)$ and $\sqrt[4]u\in W^{1,4}(\setR^d)$.
We refer to Lemma \ref{lem:Gauss} and to the subsequent discussion in the Appendix.
\begin{proposition}[adapted from Corollary 3.4 of \cite{GST}]
  \label{prp:savarelsc}
  The functional $\nrgm_\lambda$ is sequentially lower semi-continuous
  with respect to narrow convergence.
\end{proposition}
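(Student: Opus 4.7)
The plan is to decouple $\nrgm_\lambda$ into its moment part and its second-order integral part, handle the two separately, and reduce the lower semi-continuity of the main term to the known result of \cite{GST} via the alternative representation \eqref{eq:savareI}. The moment term $\lambda^3\intrn|x|^2\dd\mu(x)$ is a standard lower semi-continuous quantity on $\propo$ (the integrand is non-negative and continuous), so the entire difficulty lies in the second-order integral.

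First, I would take a narrowly convergent sequence $\mu_n = u_n\leb \to \mu$ in $\propo$ and, passing to a subsequence, assume that $L := \lim_n \nrgm_0(\mu_n)$ is finite; otherwise there is nothing to show. Proposition \ref{prp:savarebnd} then delivers uniform bounds on $\sqrt{u_n}$ in $W^{2,2}(\setR^d)$ and on $\sqrt[4]{u_n}$ in $W^{1,4}(\setR^d)$. Banach--Alaoglu together with Rellich produces a further subsequence with $\sqrt{u_n} \rightharpoonup s$ weakly in $W^{2,2}$ as well as $\sqrt{u_n}\to s$ in $L^2_{\loc}$ and pointwise a.e., and narrow convergence of $u_n\leb\to u\leb$ identifies $s^2 = u$ a.e. Hence $s=\sqrt u\in W^{2,2}(\setR^d)$, so $\mu$ lies in the effective domain of $\nrgm_0$.

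The crucial passage to the liminf in the integrand is carried out by switching to the representation $\mathfrak{K}_{-1}(\cdot|\leb)$ from \eqref{eq:savareI}, which, by Remark \ref{rem:nrgm}, coincides with $\nrgm_0$ on the common domain $\{\sqrt u \in W^{2,2}(\setR^d)\}$. Corollary 3.4 of \cite{GST} supplies the narrow lower semi-continuity of $\mathfrak{K}_{-1}(\cdot|\leb)$, proved there through a duality representation expressing $\mathfrak{K}_{-1}$ as the supremum of a family of functionals continuous under narrow convergence, from which lower semi-continuity is immediate. Chaining,
\[
\nrgm_0(\mu) = \mathfrak{K}_{-1}(\mu|\leb) \le \liminf_n \mathfrak{K}_{-1}(\mu_n|\leb) = \liminf_n \nrgm_0(\mu_n) = L,
\]
which is the claimed inequality.

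The main obstacle is the tensor-product term $4\nabla\sqrt[4]u\otimes\nabla\sqrt[4]u$: being quadratic in $\nabla\sqrt[4]u$, it does not yield to weak continuity in $W^{1,4}$, nor to a direct convexity argument in the form \eqref{eq:nrgm}. Routing through $\mathfrak{K}_{-1}$ circumvents this at the price of a small verification that \eqref{eq:nrgm} and \eqref{eq:savareI} really do agree on the common domain, which uses the vanishing of Sobolev derivatives on level sets already alluded to in Remark \ref{rem:nrgm}.
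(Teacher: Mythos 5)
Your proposal is correct and takes essentially the same route as the paper's proof: reduce to the narrow lower semi-continuity of $\mathfrak{K}_{-1}(\cdot|\leb)$ quoted from Corollary 3.4 of \cite{GST}, and recover the domain restriction $\sqrt u\in W^{2,2}(\setR^d)$ for the limit from the uniform bound of Proposition \ref{prp:savarebnd} plus compactness of $(\sqrt{u_n})$. The only cosmetic difference is that you spell out the subsequence extraction and handle the moment term explicitly, both of which the paper leaves implicit.
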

\begin{proof}
  In \cite[Corollary 3.4]{GST}, the lower semi-continuity
  of $\mathfrak{K}_{-1}(\cdot|\leb)$ recalled in \eqref{eq:savareI} above is shown.
  The only difference between $\mathfrak{K}_{-1}(\mu|\leb)$ and $\nrgm_0(\mu)$ is that
  the former is defined by the integral value (possibly $+\infty$)
  for all $\mu=u\leb$ with $\sqrt u\in W^{2,2}_\loc(\setR^d)$,
  and the latter is $+\infty$ unless $\sqrt u\in W^{2,2}(\setR^d)$.

  The ``stability'' of $\nrgm_0$'s restricted domain follows directly from Proposition \ref{prp:savarebnd} above:
  if $(\mu_n)$ converges narrowly to $\mu$ and has $\sup_n\nrgm_\lambda(\mu_n)<\infty$,
  then $(\sqrt {u_n})$ is bounded in $W^{2,2}(\setR^d)$ thanks to \eqref{eq:savarebnd}.
  Consequently, $(\sqrt{u_n})$ converges strongly to a limit $\sqrt u$ in $L^2(\setR^d)$,
  which in turn implies that $\mu=u\leb$ is absolutely continuous.
  And boundedness of $(\sqrt{u_n})$ in $W^{2,2}(\setR^d)$ further implies that also $\sqrt u\in W^{2,2}(\setR^d)$.
\end{proof}


\subsection{Minimizing movements}
\label{sct:mm}
Let a time step size $\tau>0$ be fixed.
For a given $\nu\in\propo$,
define the Yosida-penalized energy functional $\nrgm_{\lambda,\tau}(\cdot;\nu)$ by
\begin{align*}
  \nrgm_{\lambda,\tau}(\mu;\nu) = \frac1{2\tau}\wass(\mu,\nu)^2 + \nrgm_\lambda(\mu).
\end{align*}
\begin{lemma}
  \label{lem:mm}
  For each $\nu\in\propo$, there exists a global minimizer $\mu^*\in\propo$ of $\nrgm_{\lambda,\tau}(\cdot;\nu)$. 
\end{lemma}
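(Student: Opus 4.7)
The plan is the standard direct method of the calculus of variations, with two minor points to address: the infimum must be finite (so that minimizers are meaningful), and the minimizing sequence must admit a narrowly convergent subsequence whose limit is $\nrgm_{\lambda,\tau}(\cdot;\nu)$-admissible.

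First I would check that $\inf_\mu\nrgm_{\lambda,\tau}(\mu;\nu)$ is finite. A standard Gaussian density $\mu_0=U_1\leb$ satisfies $\sqrt{u_0}\in W^{2,2}(\setR^d)$ with finite second moment, hence $\nrgm_\lambda(\mu_0)<\infty$, and $\wass(\mu_0,\nu)<\infty$ because $\nu\in\propo$ by assumption. So $\nrgm_{\lambda,\tau}(\mu_0;\nu)<\infty$, while the non-negativity of both the $\wass^2$-term and of $\nrgm_\lambda$ gives $\inf\ge 0$.

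Next I would fix a minimizing sequence $(\mu_n)\subset\propo$, that is, $\nrgm_{\lambda,\tau}(\mu_n;\nu)\to \inf$. In particular, $\wass(\mu_n,\nu)^2$ stays bounded, uniformly in $n$. Combined with the elementary estimate
\begin{equation*}
\sqrt{\mom(\mu_n)}\le \wassernoq{\mu_n}{\nu}+\sqrt{\mom(\nu)},
\end{equation*}
this bounds the second moments of $(\mu_n)$ uniformly, and in particular yields tightness of the family. By Prokhorov's theorem, a subsequence (still labelled $\mu_n$) converges narrowly to some $\mu^*\in\prop$. Uniform bounds on the second moments further guarantee that $\mu^*\in\propo$.

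Finally I would invoke the two lower semi-continuity properties recalled in Section \ref{sct:background} and Proposition \ref{prp:savarelsc}: $\nrgm_\lambda$ is sequentially narrowly lsc, and $\wass(\cdot,\nu)^2$ is narrowly lsc since the Wasserstein distance metrizes and is lsc with respect to narrow convergence on $\propo$. Adding these,
\begin{equation*}
\nrgm_{\lambda,\tau}(\mu^*;\nu)\le \liminf_{n\to\infty}\nrgm_{\lambda,\tau}(\mu_n;\nu)=\inf_{\mu\in\propo}\nrgm_{\lambda,\tau}(\mu;\nu),
\end{equation*}
so $\mu^*$ is the sought minimizer. The only genuinely non-trivial input is the narrow lower semi-continuity of $\nrgm_\lambda$, which includes the implicit statement that $\sqrt{u^*}$ inherits $W^{2,2}(\setR^d)$-regularity from the sequence; this is precisely the content of Proposition \ref{prp:savarelsc} (combined with the bound of Proposition \ref{prp:savarebnd}), so no further work is required.
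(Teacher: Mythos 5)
Your proof is correct and follows essentially the same route as the paper: finiteness of the infimum via a smooth admissible competitor, coercivity (tightness via the Wasserstein term and Prokhorov), and narrow lower semi-continuity of both terms with Proposition \ref{prp:savarelsc} carrying the nontrivial part. The paper states the same three bullet points more tersely; your version simply fills in the standard details.
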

\begin{proof}
  This is a standard argument from the calculus of variations.
  Observe that the functional $\mu\mapsto \nrgm_{\lambda,\tau}(\mu;\nu)$ has the following properties:
  \begin{itemize}
  \item It is bounded from below (in fact: non-negative),
    and is not identically $+\infty$ (since it has a finite value
    for any absolutely continuous $\mu=u\leb\in\propo$ with $\sqrt u\in W^{2,2}(\setR^d)$).
    Hence, it has a finite infimum.
  \item It is coercive.
    Indeed, by non-negativity of $\nrgm_\lambda$ and the properties of $\wass$,
    one easily shows that $\nrgm_{\lambda,\tau}(\mu;\nu)\ge \frac1{4\tau}\mom(\mu) - C$
    with a $C$ that is expressible in terms of $\nu$ and $\tau$.
    Hence, sublevels are tight and thus pre-compact with respect to narrow convergence.
  \item It is sequentially lower semi-continuous with respect to narrow convergence,
    see Proposition \ref{prp:savarelsc} above.
  \end{itemize}
  The existence of a minimizer now follows by standard arguments.
\end{proof}
As a consequence of Lemma \ref{lem:mm},
the \emph{minimizing movement scheme} for $\nrgm$ is well-defined
for every initial condition $\mu_0\in\propo$.
That is, starting from $\mu_\tau^0:=\mu_0$, 
one can define a sequence $(\mu_\tau^n)_{n=0}^\infty$ inductively 
by choosing for $\mu_\tau^n$ as a minimizer of $\nrgm_\tau(\cdot;\mu_\tau^{n-1})$ for $n=1,2,\ldots$
For notational convenience,
we also introduce the usual time-discrete ``interpolation'' $\bar\mu_\tau:\setR_{\ge0}\to\propo$ by
\begin{align*}
  \bar\mu_\tau(t) = \mu_\tau^n\quad\text{for all $t\in((n-1)\tau,n\tau]$}
\end{align*}
for $n=1,2,\ldots$, and $\bar\mu_\tau(0)=\mu_0$.
The sequence $(\mu_\tau^n)_{n=0}^\infty$ and its interpolation $\bar\mu_\tau$ satisfy a variety of energy estimates,
that directly follow from the construction via minimization.
\begin{lemma}[Basic discrete estimates]
  For every $N=1,2,\ldots$, the energy $\nrgm_\lambda(\mu_\tau^N)$ is finite,
  and moreover,
  \begin{align}
    \label{eq:ce-mono}
    & \nrgm_\lambda(\mu_\tau^N) \leq \nrgm_\lambda(\mu_\tau^{N-1}) \leq \nrgm_\lambda(\mu_0).  \\
    \label{eq:ce-sum}
    & \frac\tau2 \sum_{n = N}^{\infty} \left(\frac{\wassernoq{\mu_\tau^{n+1}}{\mu_\tau^{n}}}\tau\right)^2
      \leq \nrgm_\lambda(\mu_\tau^{N}).
  \end{align}
  And further, for all $s,t\ge N\tau$,
  \begin{align}
    \label{eq:ce-holder}
    \wasser{\bar\mu_\tau(t)}{\bar\mu_\tau(s)} \leq 2\nrgm_\lambda(\mu_\tau^N)\,\max\{ \tau, |t-s| \}.
  \end{align}
\end{lemma}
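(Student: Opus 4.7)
All four claims are standard consequences of the variational characterization of $\mu_\tau^n$ as a minimizer of $\nrgm_{\lambda,\tau}(\,\cdot\,;\mu_\tau^{n-1})$, combined with the triangle inequality and Cauchy--Schwarz. I would address them in the order stated.

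For \eqref{eq:ce-mono}, I would test minimality of $\mu_\tau^n$ against the competitor $\mu_\tau^{n-1}$ itself, whose Wasserstein contribution vanishes. This yields
\[
\frac1{2\tau}\wasser{\mu_\tau^n}{\mu_\tau^{n-1}} + \nrgm_\lambda(\mu_\tau^n) \;\le\; \nrgm_\lambda(\mu_\tau^{n-1}),
\]
and dropping the non-negative Wasserstein term gives the one-step monotonicity, which iterates to the full chain of \eqref{eq:ce-mono}. Finiteness of $\nrgm_\lambda(\mu_\tau^N)$ for $N\ge 1$---even if $\nrgm_\lambda(\mu_0)=+\infty$---is already contained in the proof of Lemma~\ref{lem:mm}: since $\nrgm_{\lambda,\tau}(\,\cdot\,;\mu_\tau^{n-1})$ is not identically $+\infty$, its finite infimum is attained by the minimizer $\mu_\tau^n$.

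For \eqref{eq:ce-sum}, I would rewrite the inequality above as
\[
\frac1{2\tau}\wasser{\mu_\tau^{n+1}}{\mu_\tau^{n}} \;\le\; \nrgm_\lambda(\mu_\tau^n)-\nrgm_\lambda(\mu_\tau^{n+1}),
\]
telescope from $n=N$ to $n=M$, and then let $M\to\infty$, using non-negativity of $\nrgm_\lambda$ to drop the terminal term. Dividing both sides by $\tau^2$ and regrouping produces the stated bound.

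For \eqref{eq:ce-holder}, pick $s\le t$ with $s,t\ge N\tau$ and let $N\le n\le m$ be the indices with $\bar\mu_\tau(s)=\mu_\tau^n$ and $\bar\mu_\tau(t)=\mu_\tau^m$. By the triangle inequality for $\wassernoq{\cdot}{\cdot}$, followed by Cauchy--Schwarz in the $m-n$ summands, and then \eqref{eq:ce-sum}, one has
\[
\wasser{\bar\mu_\tau(t)}{\bar\mu_\tau(s)} \;\le\; (m-n)\sum_{k=n}^{m-1}\wasser{\mu_\tau^{k+1}}{\mu_\tau^{k}} \;\le\; 2\tau(m-n)\,\nrgm_\lambda(\mu_\tau^N).
\]
The definition of the piecewise constant interpolation gives the bookkeeping identity $|t-s|\ge(m-n-1)\tau$, so that $(m-n)\tau\le |t-s|+\tau$ is controlled by a multiple of $\max\{\tau,|t-s|\}$, and \eqref{eq:ce-holder} follows.

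\emph{Main obstacle.} There is essentially none: the whole lemma is an unwinding of the Euler--Lagrange-type comparison with the previous iterate. The only point requiring mild care is the case $\nrgm_\lambda(\mu_0)=+\infty$ in \eqref{eq:ce-mono}, handled as above by observing that a single step of the minimizing movement scheme already produces a state of finite energy.
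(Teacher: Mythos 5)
Your proof is correct and follows exactly the standard minimizing-movement argument that the paper invokes (the paper gives no proof of this lemma, referring instead to \cite{AGS}): test minimality against the previous iterate, telescope, and estimate the piecewise-constant curve by triangle inequality plus Cauchy--Schwarz. The observation about finiteness of $\nrgm_\lambda(\mu_\tau^N)$ for $N\ge1$ even when $\nrgm_\lambda(\mu_0)=+\infty$ is a good one, since the theorem only assumes finite entropy and second moment for $u_0$.

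One small point worth flagging, which you yourself hint at: the estimate
\begin{align*}
\wasser{\bar\mu_\tau(t)}{\bar\mu_\tau(s)} \le 2(m-n)\tau\,\nrgm_\lambda(\mu_\tau^N)
\end{align*}
does not quite yield the stated constant $2$ in \eqref{eq:ce-holder}. With the paper's interpolation convention $\bar\mu_\tau(t)=\mu_\tau^n$ for $t\in((n-1)\tau,n\tau]$, one only has $(m-n-1)\tau< t-s$, hence $(m-n)\tau< |t-s|+\tau\le 2\max\{\tau,|t-s|\}$, and the resulting bound is $4\nrgm_\lambda(\mu_\tau^N)\max\{\tau,|t-s|\}$. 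The factor-of-two loss is sharp (take $s=n\tau$, $t=(n+1)\tau+\epsilon$ with $\epsilon\downarrow0$), so the constant $2$ in \eqref{eq:ce-holder} as printed is slightly too optimistic; this is harmless for the rest of the paper, which only needs a $\tau$-uniform modulus of continuity. Your phrasing ``controlled by a multiple of $\max\{\tau,|t-s|\}$'' correctly sidesteps the issue. Finally, in your derivation of \eqref{eq:ce-sum} the phrase ``dividing both sides by $\tau^2$'' is a slip of the pen: the telescoped inequality $\frac1{2\tau}\sum_{n\ge N}\wasser{\mu_\tau^{n+1}}{\mu_\tau^n}\le\nrgm_\lambda(\mu_\tau^N)$ is already \eqref{eq:ce-sum}, merely rewritten.
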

The derivation of these estimates is a standard procedure, see e.g. \cite{AGS}.

\subsection{Discrete equation}
\label{sct:deq}
In this section, we derive a time-discrete surrogate of \eqref{eq:QDD6intro}
that is satisfied by the discrete approximation $(\mu_\tau^n)$.
Following the seminal idea from \cite{JKO,GST},
we produce a time-discrete, very weak formulation of \eqref{eq:QDD6intro}
--- eventually leading to \eqref{eq:weak} --- 
by performing an inner variation of the minimizer $\mu_\tau^n$ of $\nrgm_\tau(\cdot;\mu_\tau^{n-1})$.
That weak formulation is well-defined under the hypothesis that $\mu_\tau^n\in\Dom{\nrgm_\lambda}$,
which follows trivially by the construction.

More specifically,
let a smooth and compactly supported function $\varphi\in C^\infty_c(\setR^d)$ be given,
and define the associated $\setR^d$-gradient flow $X_{(\cdot)}: \setR\times\setR^d\to\setR^d$
as solution to the ODE initial value problem
\begin{equation}
  \label{eq:ourODE}
  \frac{\dn}{\dd\s} X_\s = -\nabla\varphi\circ X_\s, \quad X_0 = \id. 
\end{equation}
For a given absolutely continuous measure $\mu=u\leb \in \Dom{\nrgm_\lambda}$,
we consider the deformations $\mu_\s:=X_\s\#\mu\in\propo$.
These satisfy the continuity equation along the vector field $- \nabla\varphi$, i.e.,
\begin{align}
  \label{eq:cont}
  \partial_\s\mu_\s = \dv(\mu_\s\nabla\varphi) .
\end{align}
In Lemma \ref{lem:1stvariation} below,
the $\s$-derivative of $\nrgm_\lambda(\mu_\s)$ at $\s=0$ is given explicitly.
In view of \eqref{eq:cont}, we obtain formally
--- that is, in case that $u$ is positive and smooth ---
that
\begin{align*}
  \frac{\dn}{\dd\s}\bigg|_{\s=0}\nrgm_\lambda(\mu_\s)
  &= \intrn \left( \frac{\delta\nrg_0}{\delta u} + \lambda^3 |x|^2 \right)\dv(u\nabla\varphi) \dd x\\
  &= \intrn \varphi\, \dv\big(u\nabla\Phi(u)\big)\dd x -2\lambda^3\intrn x\cdot\nabla\varphi u \dd x.
\end{align*}
In other words, calculating the $\s$-derivative of $\nrgm_\lambda(\mu_\s)$ at $\s=0$
amounts to calculating a special form of the right-hand side in \eqref{eq:QDD6intro},
``tested'' against $\varphi$.
This philosophy --- which was the founding idea behind the flow interchange lemma, see \eqref{eq:flowinterchange} ---
is made rigorous in Lemma \ref{lem:discweakform} further below.
\begin{lemma}[First variation]
  \label{lem:1stvariation}
  Let $\mu=u\leb\in \Dom{\nrgm_\lambda}$
  and define accordingly $s:=\sqrt{u}\in W^{2,2}(\setR^d)$ and $z:=\sqrt[4]{u}\in W^{1,4}(\setR^d)$.
  Given $\varphi\in C^\infty_c(\setR^d)$, define the flow $X_{(\cdot)}$ as in \eqref{eq:ourODE} above.
  Then the map $\s \mapsto \nrgm_\lambda(X_\s\#\mu)$ is differentiable in $\s= 0$,
  with derivative:
  \begin{align}
    \label{eq:EL}
    \frac{\dn}{\dd\s}\bigg|_{\s=0}\nrgm_\lambda(X_\s\#\mu)
    = -\N{u}{\varphi}
    -2\lambda^3\intrn x\cdot\nabla\varphi\dd\mu,
  \end{align}
  with $\N{u}{\phi}$ given in \eqref{eq:N}.
\end{lemma}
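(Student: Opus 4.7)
The strategy is a Lagrangian (pullback) computation: freeze the base density $\mu=u\leb$, push all the $\s$-dependence onto the smooth diffeomorphism $X_\s$, differentiate the resulting integrand in $\s$ pointwise a.e., and then integrate by parts to identify the result with $-\N{u}{\varphi}$. Since $\varphi\in C^\infty_c(\setR^d)$, the flow \eqref{eq:ourODE} is a $C^\infty$-diffeomorphism of $\setR^d$ coinciding with the identity outside a fixed ball; in particular $A_\s:=DX_\s$ and $J_\s:=\det A_\s>0$ are jointly smooth in $(\s,x)$, with $A_0=I$, $J_0=1$, $\partial_\s X_\s|_0=-\nabla\varphi$, $\partial_\s A_\s|_0=-\nabla^2\varphi$, and $\partial_\s J_\s|_0=-\Delta\varphi$. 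Conservation of mass under pushforward gives the pointwise identities $s_\s\circ X_\s=s/\sqrt{J_\s}$ and $z_\s\circ X_\s=z/J_\s^{1/4}$, which in particular place $\mu_\s$ in $\Dom{\nrgm_\lambda}$ for small $|\s|$.

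After the change of variables $y=X_\s(x)$ in \eqref{eq:nrgm},
\[\nrgm_\lambda(\mu_\s)=2\intrn\|(\ell_\s\circ X_\s)(x)\|^2 J_\s(x)\dd x+\lambda^3\intrn|X_\s(x)|^2 u(x)\dd x.\]
The moment term is manifestly smooth in $\s$; its derivative at $\s=0$ equals $-2\lambda^3\int x\cdot\nabla\varphi\dd\mu$, producing the second summand of \eqref{eq:EL}. For the first term I would express $\ell_\s\circ X_\s$ explicitly using the chain rule $(\nabla_y\psi)(X_\s)=A_\s^{-T}\nabla_x(\psi\circ X_\s)$ and its Hessian analogue (which contributes an additional $\nabla_x A_\s$ correction) applied to $\psi=s_\s$ and $\psi=z_\s$, combined with the pushforward identities above. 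This yields a tensor-valued formula for $\ell_\s\circ X_\s$ that is polynomial in $s,\nabla s,\nabla^2 s,z,\nabla z$ with coefficients that are $C^\infty$ in $(\s,x)$, built from $A_\s,A_\s^{-1},J_\s,\nabla J_\s,\nabla A_\s$.

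Because all these coefficients are bounded and coincide with their $\s=0$ values outside a fixed ball, the integrand is smooth in $\s$ for almost every $x$. A standard dominated convergence argument --- using the compact support of $\varphi$, the inclusions $\ell,\nabla s,s\in L^2(\setR^d)$, and Proposition \ref{prp:savarebnd} --- justifies exchanging $\partial_\s|_{\s=0}$ with the integral. The resulting pointwise derivative is a finite sum of bilinear contractions of $\ell$ with quantities involving $s$, $\nabla s$, and derivatives of $\varphi$ of order at most three.

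A final sequence of integrations by parts --- all legitimate because $\varphi$ is smooth with compact support while $\ell,\nabla s,s\in L^2(\setR^d)$ --- then reorganizes these contractions into exactly the form \eqref{eq:N} of $\N{u}{\varphi}$. The main obstacle is the bookkeeping in these last two stages: unfolding the Hessian chain rule produces several groups of lower-order terms, and their rearrangement into the compact form \eqref{eq:N} requires patient algebra. Importantly, no positivity or extra smoothness of $u$ is required: every step works symbolically within Sobolev calculus, and the compact support of $\varphi$ keeps every intermediate integrand in $L^1$.
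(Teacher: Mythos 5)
Your proposal follows essentially the same Lagrangian strategy as the paper's proof: change variables along $X_\s$, use the pushforward identities $s_\s\circ X_\s = s/\sqrt{J_\s}$ and $z_\s\circ X_\s = z/J_\s^{1/4}$, unfold the chain rule, differentiate under the integral by dominated convergence exploiting the smoothness in $\s$ and the compact support (in $x$) of $X_\s-\id$ and $J_\s-1$, and identify the result with $-\N{u}{\varphi}$. Two small points are worth flagging. First, the explicit pointwise formula unavoidably involves \emph{fourth}-order derivatives of $\varphi$ (namely $\nabla^2\Delta\varphi$, coming from differentiating $J_\s$ twice), not merely third-order as you state. Second, and more importantly, no final integration by parts is actually needed --- and it would in fact be delicate to integrate by parts against $\ell$ or $\ell^2$, since $\ell$ is only $L^2$: after the chain-rule bookkeeping, the purely pointwise algebraic identity $\nabla s = 2z\nabla z$ cancels the $\nabla\Delta\varphi$-terms and collapses the $\nabla^2\varphi$-terms to $2\nabla^2\varphi:(\ell^2)$, so that the $\s$-derivative of the (Lagrangian) integrand is already exactly $-\tfrac14$ times the integrand of $\N{u}{\varphi}$. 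If you carry out the algebra to the end, you should find that your anticipated integration-by-parts step evaporates.
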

\begin{proof}
  First, let us assume that $\lambda=0$;
  the minor modifications for $\lambda>0$ are described at the end of the proof.
  Introduce $u_\s$ as the density of $X_\s\#\mu$,
  and accordingly $s_\s=\sqrt{u_\s}$ and $z_\s=\sqrt[4]{u_\s}$.
  For later reference, observe that
  \begin{align*}
    X_\s = \id - \s\nabla\varphi + O(\s^2),
    \quad
    \dff X_\s = \eins - \s\dff\nabla\varphi + O(\s^2),
    \quad
    \dff^2X_\s = -\s\dff^2\nabla\varphi + O(\s^2).
  \end{align*}
  Next, define the volume distortion $V_\s:=\det(\dff X_\s)$,
  which is a positive and smooth function,
  and observe that
  \begin{align*}
    V_\s = 1 - \s\Delta\varphi + O(\s^2),
    \quad
    \dff V_\s = -\s \dff\Delta\varphi + O(\s^2),
    \quad
    \dff^2V_\s = -\s\dff^2\Delta\varphi + O(\s^2).
  \end{align*}
  By the change of variables formula, we have with $x=X_\s(y)$,
  \begin{equation}
    \label{eq:nrgs}
    \begin{split}
      \nrgm_0(X_\s\#\mu) 
      &= 2 \intrn \|\nabla^2 s_\s-4\nabla z_\s\otimes\nabla z_\s\|^2\dd x \\
      &= 2 \intrn
      \big\|V_\s^{1/2}\nabla^2 s_\s\circ X_\s-4(V_\s^{1/4}\nabla z_\s\circ X_\s)\otimes(V_\s^{1/4}\nabla z_\s\circ X_\s)\big\|^2
      \dd y
    \end{split}
  \end{equation}
  We shall now express the spatial derivatives of $s_\s$ and $z_\s$
  in terms of the respective derivatives of $s$ and $z$.
  For the next calculations, which involve a repeated application of the chain rule,
  we use instead of gradients and Hessians
  the less intuitive but more consistent notations with total derivatives $\dff$ that produce row vectors.

  Recall the effect of the push-forward on densities:
  \begin{equation}
    \label{eq:recallpush}
    u_\s= \frac{u}{V_\s}\circ X_\s^{-1}.
  \end{equation}
  Hence we have:
  \begin{align}
    \label{eq:sztrafo}
    s_\s = (V_\s^{-1/2}s)\circ X_\s^{-1},
    \quad
    z_\s = (V_\s^{-1/4}z)\circ X_\s^{-1}.
  \end{align}
  For the first derivatives, we thus obtain
  \begin{align*}
    \dff s_\s &= \left\{\left(V_\s^{-1/2}\dff s-\frac12V_\s^{-3/2}s\dff V_\s\right)(\dff X_\s)^{-1}\right\}\circ X_\s^{-1}, \\
    \dff z_\s &= \left\{\left(V_\s^{-1/4}\dff z-\frac14V_\s^{-5/4}z\dff V_\s\right)(\dff X_\s)^{-1}\right\}\circ X_\s^{-1},
  \end{align*}
  and for the second derivative of $s_\s$,
  \begin{align*}
    \dff^2s_\s
    &= \bigg\{
      \bigg[V_\s^{-1/2}\dff^2s-V_\s^{-3/2}\dff s\otimes_s\dff V_\s
      -\frac12V_\s^{-3/2}s\dff^2V_\s + \frac34V_\s^{-5/2}s\dff V_\s\otimes\dff V_\s\\
    &\qquad 
      -\left(V_\s^{-1/2}\dff s-\frac12V_\s^{-3/2}s\dff V_\s\right)(\dff X_\s)^{-1}\dff^2X_\s
      \bigg]
      :\big((\dff X_\s)^{-1}\otimes (\dff X_\s)^{-1}\big)
      \Big\}\circ X_\s^{-1}.                 
  \end{align*}
  The expression on the right hand side calls for some explanation:
  the part in the square brackets is a symmetric bilinear form;
  and when $\dff^2s_\s\circ X_\s$ is applied to two vectors $\xi,\eta\in\setR^d$,
  then that bilinear form is evaluated on the vectors $(\dff X_\s)^{-1}\xi,(\dff X_\s)^{-1}\eta$.
  
  Since all the $\s$-dependence on the right-hand side is now in $X_\s$ and $V_\s$,
  these expressions are obviously differentiable in $\s$,
  with derivatives
  \begin{align*}
    \frac{\dn}{\dd\s}\bigg|_{\s=0}\big(V_\s^{1/4}\dff z_\s\circ X_\s\big)
    &=\frac{\dn}{\dd\s}\bigg|_{\s=0}
      \left\{\left(\dff z-\frac14V_\s^{-1}z\dff V_\s\right)(\dff X_\s)^{-1}\right\}
    &= \dff z\dff\nabla\varphi + \frac14z\dff\Delta\varphi
  \end{align*}
  and
  \begin{align*}
    &\frac{\dn}{\dd\s}\bigg|_{\s=0}\big(V_\s^{1/2}\dff^2s_\s\circ X_\s\big) \\
    &= \frac{\dn}{\dd\s}\bigg|_{\s=0}
    \bigg\{
      \bigg[\dff^2s-V_\s^{-1}\dff s\otimes_s\dff V_\s
      -\frac12V_\s^{-1}s\dff^2V_\s + \frac34V_\s^{-2}s\dff V_\s\otimes\dff V_\s \\
    &\qquad -\left(\dff s-\frac12V_\s^{-1}s\dff V_\s\right)(\dff X_\s)^{-1}\dff^2X_\s
    \bigg]
      :\big((\dff X_\s)^{-1}\otimes (\dff X_\s)^{-1}\big)
      \bigg\}
    \\
    &= 2\dff^2s\dff\nabla\varphi +\dff s\otimes_s\dff\Delta\varphi
    +\frac12s\dff^2\Delta\varphi +\dff s\dff^2\nabla\varphi.
  \end{align*}
  We are now in the position to calculate the $\s$-derivative of $\nrgm(X_\s\#\mu)$ at $\s=0$
  by differentiating in \eqref{eq:nrgs} directly under the integral.
  Recall the abbreviation $\ell=\nabla^2s-4\nabla z\otimes\nabla z$, which is a symmetric $d\times d$-matrix.
  For the derivative of the integrand we obtain:
  \begin{align*}
    &\frac12\frac{\dn}{\dd\s}\bigg|_{\s=0}
    \big\|V_\s^{1/2}\nabla^2 s_\s\circ X_\s-4(V_\s^{1/4}\nabla z_\s\circ X_\s)\otimes(V_\s^{1/4}\nabla z_\s\circ X_\s)\big\|^2 \\
    &= \left(\frac{\dn}{\dd\s}\bigg|_{\s=0}\big(V_\s^{1/2}\nabla^2s_\s\circ X_\s\big)\right):\ell
      - 8\nabla z\cdot\ell\cdot\left(\frac{\dn}{\dd\s}\bigg|_{\s=0}\big(V_\s^{1/4}\nabla z_\s\circ X_\s\big)\right) \\
    &= 2\nabla^2\varphi:(\ell\cdot\nabla^2s) + \nabla\Delta\varphi\cdot\ell\cdot\nabla s
      +\frac12s\nabla^2\Delta\varphi:\ell + \nabla s\cdot\nabla^3\varphi:\ell \\
    &\qquad -8\nabla^2\varphi: \big(\ell\cdot(\nabla z\otimes\nabla z)\big)
      -2z\nabla\Delta\varphi\cdot\ell\cdot\nabla z \\
    &=2\nabla^2\varphi:(\ell^2) + \frac12(s\nabla^2\Delta\varphi+2 \nabla s\cdot\nabla^3\varphi):\ell,
  \end{align*}
  where we have used that $\nabla s=2z\nabla z$ to cancel the two terms multiplying $\nabla\Delta\varphi$.
  Integration of this equality with respect to $x$ yields \eqref{eq:EL} with $\lambda=0$.

  When $\lambda>0$, we calculate
  \begin{align*}
    \frac{\dn}{\dd\s}\bigg|_{\s=0}\nrgm_\lambda(X_\s\#\mu)
    = \frac{\dn}{\dd\s}\bigg|_{\s=0}\nrgm_0(X_\s\#\mu) 
    +\lambda^3\frac{\dn}{\dd\s}\bigg|_{\s=0}\intrn|x|^2\dd X_\s\#\mu(x).
  \end{align*}
  Since by definition of the push-forward,
  \begin{align*}
    \intrn|x|^2\dd(X_\s\#\mu) = \intrn |X_\s(y)|^2\dd\mu(y),
  \end{align*}
  we directly obtain
  \begin{align*}
    \frac{\dn}{\dd\s}\bigg|_{\s=0}\intrn|X_\s(y)|^2\dd X_\s\#\mu(y)
    = \intrn 2X_0(y)\cdot \frac{\dn}{\dd\s}\bigg|_{\s=0}X_\s(y)\dd\mu(y)
    = -2\intrn y\cdot\nabla\varphi(y)\dd\mu(y),
  \end{align*}
  which provides the additional term in \eqref{eq:EL}.
\end{proof}
\begin{lemma}
  \label{lem:discweakform}
  For any test function $\phi \in C^\infty_c(\setR^d)$, there is a constant $\alpha$
  such that for each $n\ge1$,
  the measures $\mu_\tau^n=u_\tau^n\leb$ and $\mu_\tau^{n-1}=u_\tau^{n-1}\leb$
  satisfy the following time discrete version of \eqref{eq:weak}:
  \begin{equation}
    \label{eq:discweak}
    \left| \intrn \phi \frac{u_\tau^n - u_\tau^{n-1}}\tau\dd x + \N{u^n_\tau}{\phi} + 2\lambda^3\intrn x\cdot\nabla\varphi u_\tau^n\dd x\right|
      \leq \frac{\alpha\tau}{2} \left(\frac{\wassernoq{u^n_\tau}{u^{n-1}_\tau}}\tau\right)^2.
  \end{equation}
\end{lemma}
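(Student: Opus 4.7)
The plan is to exploit minimality of $\mu_\tau^n$ in the variational problem \eqref{eq:mm-preview} by varying along the flow $X_\s$ from \eqref{eq:ourODE}, and to combine the first-variation identity from Lemma \ref{lem:1stvariation} with Taylor expansions of both the Wasserstein distance and the test function $\phi$ itself. The quadratic error on the right-hand side of \eqref{eq:discweak} will come entirely from the latter expansion, not from the variational argument.

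First I would fix an optimal transport plan $\pi^*\in\Pi(\mu_\tau^n,\mu_\tau^{n-1})$ and observe that $(X_\s\times\id)\#\pi^*$ is admissible between $\mu_\s:=X_\s\#\mu_\tau^n$ and $\mu_\tau^{n-1}$. Since $\phi\in C^\infty_c(\setR^d)$, the expansion $X_\s(y)=y-\s\nabla\phi(y)+\mathcal{O}(\s^2)$ is uniform in $y$, and integrating $|X_\s(y)-z|^2-|y-z|^2$ against $\pi^*$ gives
\begin{align*}
  \wasser{\mu_\s}{\mu_\tau^{n-1}}^2-\wasser{\mu_\tau^n}{\mu_\tau^{n-1}}^2 \;\le\; -2\s\,A + \s^2 B_\phi
\end{align*}
for $|\s|$ small, where $A:=\int\nabla\phi(y)\cdot(y-z)\,\dd\pi^*(y,z)$ and $B_\phi$ depends only on $\phi$ and on $\wassernoq{\mu_\tau^n}{\mu_\tau^{n-1}}$. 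Inserting this into the minimality inequality $\nrgm_\lambda(\mu_\s)-\nrgm_\lambda(\mu_\tau^n)\ge-\tfrac{1}{2\tau}\bigl[\wasser{\mu_\s}{\mu_\tau^{n-1}}^2-\wasser{\mu_\tau^n}{\mu_\tau^{n-1}}^2\bigr]$, dividing by $\s$, and sending $\s\downarrow 0$ respectively $\s\uparrow 0$ (the latter flipping the inequality) sandwiches the two-sided derivative of $\s\mapsto\nrgm_\lambda(\mu_\s)$ at $\s=0$ between $A/\tau$ and itself. Lemma \ref{lem:1stvariation} then identifies this derivative as $-\N{u_\tau^n}{\phi}-2\lambda^3\intrn x\cdot\nabla\phi\,u_\tau^n\,\dd x$, yielding the \emph{exact} identity
\begin{align*}
  -\N{u_\tau^n}{\phi}-2\lambda^3\intrn x\cdot\nabla\phi\,u_\tau^n\,\dd x \;=\; \frac{A}{\tau}.
\end{align*}

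Next I would relate $A/\tau$ to the discrete time-difference $\tfrac{1}{\tau}\intrn\phi(u_\tau^n-u_\tau^{n-1})\,\dd x$. Using $\pi^*$ once more,
\[\intrn\phi\,(u_\tau^n-u_\tau^{n-1})\,\dd x \;=\; \int\bigl[\phi(y)-\phi(z)\bigr]\,\dd\pi^*(y,z),\]
and the second-order Taylor expansion $\phi(y)-\phi(z)=\nabla\phi(y)\cdot(y-z)+r(y,z)$ with pointwise remainder $|r(y,z)|\le\tfrac12\|\nabla^2\phi\|_\infty|y-z|^2$ yields $\intrn\phi\,(u_\tau^n-u_\tau^{n-1})\,\dd x = A+\mathcal{R}$ with $|\mathcal{R}|\le\tfrac12\|\nabla^2\phi\|_\infty\,\wasser{\mu_\tau^n}{\mu_\tau^{n-1}}^2$. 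Dividing by $\tau$ and substituting the identity for $A/\tau$ above then produces \eqref{eq:discweak} with $\alpha:=\|\nabla^2\phi\|_\infty$.

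The only delicate point I anticipate is verifying that the $\mathcal{O}(\s^2)$ term $B_\phi$ in the Wasserstein expansion is genuinely controlled uniformly for small $|\s|$; this relies crucially on $\phi\in C^\infty_c(\setR^d)$, so that $\nabla\phi$ and all its derivatives are uniformly bounded and the Taylor remainder of $X_\s$ can be pushed under the integral against $\pi^*$ without losing integrability (the factor $|y-z|$ that appears in one cross term is absorbed by $\wassernoq{\mu_\tau^n}{\mu_\tau^{n-1}}$ via Cauchy--Schwarz). After division by $\s$ this correction carries a harmless factor of $\s$ and vanishes cleanly in the one-sided limits, so it does \textbf{not} contaminate the quadratic error on the right-hand side of \eqref{eq:discweak}: that error originates exclusively from the Taylor remainder of $\phi$.
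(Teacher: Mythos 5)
Your proof is correct, and it takes a genuinely different — and more self-contained — route than the paper's. Both arguments share the same skeleton (vary the minimizer along the flow $X_\s$ and use Lemma \ref{lem:1stvariation} for the first variation of $\nrgm_\lambda$), but you bypass the flow interchange lemma entirely. The paper proves \eqref{eq:discweak} by appealing to Lemma \ref{lem:flowinterchange} with $\aux(\mu)=\intrn\phi\dd\mu$, which in turn rests on the EVI for linear mass transport (Example \ref{xmp:flow}, quoted from \cite{AGS}); this gives a \emph{one-sided} inequality, and the paper recovers the two-sided bound by replacing $\phi$ with $-\phi$. You instead expand the transport cost along the optimal plan $\pi^*$ directly to obtain a two-sided (in the sign of $\s$) bound on the derivative of $\s\mapsto\wasser{\mu_\s}{\mu_\tau^{n-1}}$, which yields the \emph{exact} Euler--Lagrange identity $\frac{\dn}{\dd\s}|_{\s=0}\nrgm_\lambda(\mu_\s)=A/\tau$; the inequality \eqref{eq:discweak} then comes purely from Taylor-expanding $\phi$ against $\pi^*$. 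In both proofs the error $\frac{\alpha\tau}{2}(\wassernoq{\cdot}{\cdot}/\tau)^2$ is ultimately a manifestation of the curvature of $\phi$ — in the paper it enters through the geodesic semiconvexity modulus in the EVI, in yours through the Taylor remainder — so the constants agree up to a dimension-dependent normalization. What your route buys is elementary self-containment (no black-box EVI), a cleaner bookkeeping of where the quadratic error originates, and as a byproduct the exact identity $-\N{u_\tau^n}{\phi}-2\lambda^3\intrn x\cdot\nabla\phi\,u_\tau^n\dd x = \frac1\tau\int\nabla\phi(y)\cdot(y-z)\dd\pi^*$, which is slightly more information than the paper extracts. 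The one point worth stating more carefully when writing this up is that $W^2(\mu_\s,\mu_\tau^{n-1})\le\int|X_\s(y)-z|^2\dd\pi^*$ is only an upper bound (the pushed-forward plan need not be optimal), but — as you implicitly use — this upper bound has exactly the right sign to combine with the minimality inequality in both directions of $\s$, so the exact identity does come out.
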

\begin{proof}
  Choose $\alpha>0$ such that $-\alpha \one \leq \nabla^2 \phi \leq \alpha \one$.
  According to Example \ref{xmp:flow},
  the solution $\mu_\s=X_\s\#\mu_\tau^n$ to the continuity equation \eqref{eq:cont}
  follows an $(-\alpha)$-flow for the potential energy $\aux(\mu)=\intrn\varphi(x)\dd\mu(x)$ emerging from $\mu_\tau^n$.
  As a consequence of the flow interchange estimate \eqref{eq:flowinterchange},
  \begin{align*}
    -\frac{\dn}{\dd\s}\bigg|_{\s=0}\nrgm_\lambda(X_\s\#\mu_\tau^n)
    &= \limsup_{\s\downarrow0} \frac{\nrgm_\lambda(\mu_\tau^n)-\nrgm_\lambda(X_\s\#\mu_\tau^n)}{\s} \\
    &\le \frac{\aux(\mu_\tau^{n-1})-\aux(\mu_\tau^n)}{\tau} + \frac{\alpha}{2\tau}\wasser{\mu_\tau^n}{\mu_\tau^{n-1}} \\
    &\quad = -\intrn \varphi\frac{u_\tau^{n}-u_\tau^{n-1}}{\tau}\dd x
      + \frac{\alpha}{2\tau}\wasser{\mu_\tau^n}{\mu_\tau^{n-1}}.
  \end{align*}
  Substitution of \eqref{eq:EL} for the $\s$-derivative of $\nrgm_\lambda$ yields
  \begin{align*}
    \intrn \phi \frac{u_\tau^n - u_\tau^{n-1}}\tau\dd x + \N{u^n_\tau}{\phi} + 2\lambda^3\intrn x\cdot\nabla\varphi u_\tau^n\dd x
    \le  \frac{\alpha}{2\tau}\wasser{\mu_\tau^n}{\mu_\tau^{n-1}}.
  \end{align*}
  Replacing $\phi$ by $-\phi$ produces the same inequality (also with the same value of $\alpha$)
  but with an overall minus on the left-hand side.
  Combination of these two estimates leads to \eqref{eq:discweak}.
\end{proof}


\subsection{Additional a priori estimates}
The next step is to derive a time-discrete version of the a priori estimate \eqref{eq:apriori}.
\begin{proposition}
  \label{prp:apriori}
  The sequence $(\mu_\tau^n)$ of time-discrete approximations $\mu_\tau^n=u_\tau^n\leb$ constructed above
  satisfies at each $n=1,2,\ldots$
  \begin{equation}
    \label{eq:bpriori}
    \kappa \intrn\big(\interleave\nabla^3 \sqrt{u_\tau^n}\interleave^2
    +|\nabla \sqrt[6]{u_\tau^n}|^6 \big)\dd x
    \leq \frac{\ent_0(u_\tau^{n-1}) - \ent_0(u_\tau^n)}\tau +2d\lambda^3,
  \end{equation}
  where $\kappa > 0$ is a constant that is expressible in terms of the dimension $d$ alone.
\end{proposition}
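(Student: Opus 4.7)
The plan is to apply the flow interchange machinery of Lemma \ref{lem:flowinterchange} with the Boltzmann entropy $\aux = \ent_0$ as auxiliary functional. Its $L^2$-Wasserstein gradient flow is the linear heat semigroup $(\heat_\s)_{\s\ge 0}$, which is a $0$-flow by Example \ref{xmp:flow}(1). Since $\mu_\tau^n$ is the global minimizer of the Yosida penalization $\nrgm_{\lambda,\tau}(\cdot;\mu_\tau^{n-1})$ and $\Dom{\nrgm_\lambda}\subseteq\Dom{\ent_0}$ (by $W^{2,2}$-Sobolev embedding of $\sqrt u$ in $d\le3$), Lemma \ref{lem:flowinterchange} yields
\begin{align*}
\limsup_{\s \downarrow 0} \frac{\nrgm_\lambda(\mu_\tau^n) - \nrgm_\lambda(\heat_\s \mu_\tau^n)}{\s} \leq \frac{\ent_0(\mu_\tau^{n-1}) - \ent_0(\mu_\tau^n)}{\tau}.
\end{align*}
Since $\nrgm_\lambda = \nrgm_0 + \lambda^3 \mom$ and $\mom(\heat_\s \mu) = \mom(\mu) + 2d\s$ for every $\mu\in\propo$, this simplifies to
\begin{align*}
\limsup_{\s \downarrow 0} \frac{\nrgm_0(\mu_\tau^n) - \nrgm_0(\heat_\s \mu_\tau^n)}{\s} \leq \frac{\ent_0(\mu_\tau^{n-1}) - \ent_0(\mu_\tau^n)}{\tau} + 2d\lambda^3,
\end{align*}
so it suffices to bound the left-hand side from below by $\kappa \intrn \big(\interleave \nabla^3\sqrt{u_\tau^n}\interleave^2 + |\nabla \sqrt[6]{u_\tau^n}|^6\big)\dd x$ in order to obtain \eqref{eq:bpriori}.

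For every $r > 0$, the regularized density $u_r := \heat_r u_\tau^n$ is $C^\infty$, strictly positive, and has rapidly decaying spatial derivatives of all orders. Hence $\nrgm_0(\heat_r\mu_\tau^n)$ coincides with the classical expression $\nrg_0(u_r)$ of \eqref{eq:HFE}, and direct differentiation in $r$ gives
\begin{align*}
-\frac{\dn}{\dd r}\nrg_0(u_r) = -\intrn \Phi(u_r)\,\Delta u_r \dd x.
\end{align*}
Following the sum-of-squares calculation of \cite{BJM}, a sequence of integrations by parts rewrites this right-hand side as a collection of pointwise non-negative integrals, which in turn can be bounded from below by $\kappa \intrn \big(\interleave\nabla^3\sqrt{u_r}\interleave^2+|\nabla\sqrt[6]{u_r}|^6\big)\dd x$; all boundary fluxes at infinity vanish thanks to the rapid decay of $u_r$ and its derivatives. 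Integrating over $r \in (0,\s)$ and dividing by $\s$ yields
\begin{align*}
\frac{\nrg_0(u_\tau^n) - \nrg_0(\heat_\s u_\tau^n)}{\s} \geq \frac{\kappa}{\s}\int_0^\s \intrn \big(\interleave \nabla^3 \sqrt{u_r}\interleave^2 + |\nabla \sqrt[6]{u_r}|^6\big)\dd x \dd r.
\end{align*}
Passing to $\liminf_{\s\downarrow 0}$ via Fatou's lemma together with the lower semicontinuity of each of the two sixth-order functionals along the convergence $\heat_r u_\tau^n\to u_\tau^n$ as $r\downarrow 0$ yields the required lower bound, closing the estimate.

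The principal obstacle lies in the rigorous execution of the BJM sum-of-squares identity on $\setR^d$: on the torus the argument is purely algebraic integration by parts, whereas here each intermediate vector field — a cubic combination of $\sqrt{u_r}$ and its derivatives up to order three — must be shown integrable so that the flux at infinity may indeed be discarded. This is where the paper's remark that the whole-space case is \emph{technically more involved} comes to bear. A secondary, but genuine, technical point is the lower semicontinuity of the two sixth-order integrands themselves; its verification is dimension-sensitive and is part of what constrains the analysis to $\dimens\le 3$, via the same Sobolev embeddings already exploited in Proposition \ref{prp:savarebnd}.
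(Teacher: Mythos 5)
Your overall plan — flow interchange against the heat semigroup, differentiation of the energy along the heat flow for $r>0$, the BJM sum-of-squares inequality, and recovery of the difference quotient via the fundamental theorem of calculus — is exactly the strategy the paper pursues. However, there is a genuine gap in the step where you integrate the derivative over $r\in(0,\s)$ and equate
\begin{align*}
\int_0^\s\Big(-\tfrac{\dn}{\dd r}\nrg_0(\heat_r u_\tau^n)\Big)\dd r
= \nrg_0(u_\tau^n)-\nrg_0(\heat_\s u_\tau^n).
\end{align*}
This requires $\nrg_0(\heat_r u_\tau^n)\to\nrgm_0(\mu_\tau^n)$ as $r\downarrow 0$, and this is \emph{not} known. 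From Proposition \ref{prp:savarelsc} one only has lower semicontinuity, $\nrgm_0(\mu_\tau^n)\le\liminf_{r\downarrow0}\nrgm_0(\heat_r\mu_\tau^n)$, which is the \emph{wrong} inequality for your purpose: if you integrate on $(\delta,\s)$ and let $\delta\downarrow0$, you end up with $\liminf_\delta \nrg_0(\heat_\delta u_\tau^n)$ rather than $\nrgm_0(\mu_\tau^n)$ on the favorable side, and lower semicontinuity cannot close the gap. The source of the problem is that $s\mapsto\sqrt[4]s$ is not differentiable at $s=0$, so $\sqrt[4]{u_\tau^n}\in W^{1,4}$ does not propagate to convergence $\nabla\sqrt[4]{\heat_r u_\tau^n}\to\nabla\sqrt[4]{u_\tau^n}$ in $L^4$.

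The paper resolves this by introducing the $\eps$-regularized energy $\nrg^\eps(u)=2\intrn\|\nabla^2\sqrt{u+\eps}-4\nabla\sqrt[4]{u+\eps}\otimes\nabla\sqrt[4]{u+\eps}\|^2\dd x$, for which the map $r\mapsto\nrg^\eps(\heat_r u_\tau^n)$ \emph{is} continuous at $r=0$ (Lemma \ref{lem:epscont}; this is where the restriction $d\le3$ and the embedding $W^{2,2}\hookrightarrow L^\infty$ enter) and moreover $\nrg^\eps(u_\tau^n)\to\nrgm_0(\mu_\tau^n)$ as $\eps\downarrow0$ (Lemma \ref{lem:nrgeps}). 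The fundamental theorem of calculus is applied to $\nrg^\eps$, one then sends $\eps\downarrow0$ with Fatou's lemma on the dissipation side, and only afterwards lets $\s\downarrow0$ using lower semicontinuity. Your closing remarks identify the integrability of the vector fields in the BJM integration by parts as the technical hurdle — that is indeed one part, handled in the paper via Lemma \ref{lem:Gauss} — but the missing idea you would still need to supply is the $\eps$-regularization that makes the fundamental theorem of calculus legitimate. Relatedly, the dimension restriction $d\le3$ is used in Lemma \ref{lem:epscont} to establish that continuity, not primarily for lower semicontinuity of the sixth-order functionals as your last paragraph suggests.
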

The proof of the Lemma builds on the analogous result derived in \cite{BJM}
for a --- more hands-on --- time-discrete approximation of solutions to \eqref{eq:QDD6intro}
with \emph{periodic boundary conditions}.
The formal calculations there are identical to the ones that lead to \eqref{eq:bpriori}.
The rigorous justification is more difficult:
first because the time steps in \cite{BJM} have a higher degree of spatial regularity than the Wasserstein approximants here;
and second, because the periodic boundary conditions make any discussion of boundary terms related to integration by parts unnecessary.

We shall perform several approximations before we can apply the formal calculations from \cite{BJM}.
For notational convenience, we assume $\lambda=0$ for the moment;
the minor modifications for $\lambda>0$ are summarized at the end of the proof of Proposition \ref{prp:apriori}.

The first ingredient in our approximation procedure is the following regularization of the energy functional:
for each $\eps>0$, define
\begin{align*}
  \nrg^\eps(u) : = 2\intrn \big\|\nabla^2\sqrt{u+\eps}-4\nabla\sqrt[4]{u+\eps}\otimes\nabla\sqrt[4]{u+\eps}\|^2\dd x.
\end{align*}
for $u\in L^1(\setR^d)$ with $\sqrt u\in W^{2,2}(\setR^d)$.
\begin{lemma}
  \label{lem:nrgeps}
  Assume that $u\leb \in \Dom{\nrgm_0}$.
  Then also $\nrg^\eps(u)<\infty$, and $\lim_{\eps\downarrow0}\nrg^\eps(u)=\nrgm_0(u\leb)$.
\end{lemma}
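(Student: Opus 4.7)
The plan is to derive an explicit identity relating the integrand of $\nrg^\eps$ to that of $\nrgm_0$ plus $\eps$-dependent perturbations, verify pointwise a.e.~convergence, and then conclude with the dominated convergence theorem. Writing $s:=\sqrt{u}\in W^{2,2}(\setR^d)$ and $z:=\sqrt[4]{u}\in W^{1,4}(\setR^d)$ (using Remark~\ref{rem:nrgm}), I would observe that $s_\eps:=\sqrt{u+\eps}=\sqrt{s^2+\eps}$ and $z_\eps:=\sqrt[4]{u+\eps}=s_\eps^{1/2}$ are smooth functions of $s$. The chain rule then gives
\begin{equation*}
  \nabla s_\eps = \tfrac{s}{s_\eps}\,\nabla s,
  \qquad
  \nabla^2 s_\eps = \tfrac{s}{s_\eps}\,\nabla^2 s + \tfrac{\eps}{s_\eps^3}\,\nabla s\otimes\nabla s,
  \qquad
  \nabla z_\eps = \tfrac{s}{2\,s_\eps^{3/2}}\,\nabla s,
\end{equation*}
and hence the key identity
\begin{equation*}
  \nabla^2 s_\eps - 4\,\nabla z_\eps\otimes\nabla z_\eps
  = \tfrac{s}{s_\eps}\,\nabla^2 s - \tfrac{s^2-\eps}{s_\eps^3}\,\nabla s\otimes\nabla s.
\end{equation*}

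For pointwise a.e.~convergence, on $\{s>0\}$ the coefficients tend to $1$ and $1/s$ as $\eps\downarrow 0$, so the expression converges to $\nabla^2 s - s^{-1}\,\nabla s\otimes\nabla s$, which matches $\nabla^2 s - 4\,\nabla z\otimes\nabla z$ there since $z=\sqrt{s}$ gives $4\,\nabla z\otimes\nabla z = s^{-1}\,\nabla s\otimes\nabla s$. On $\{s=0\}$, the Sobolev principle invoked in Remark~\ref{rem:nrgm}, that $\nabla f=0$ a.e. on $\{f=0\}$ whenever $f\in W^{1,p}$, applied once to $s$ and once componentwise to $\partial_i s\in W^{1,2}$, forces $\nabla s=0$ and $\nabla^2 s=0$ a.e., so both the $\eps$-dependent expression and its candidate limit vanish there.

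The main obstacle is the construction of an $\eps$-uniform dominating function, because the coefficient $\eps/s_\eps^3$ in $\nabla^2 s_\eps$ blows up like $\eps^{-1/2}$ where $s$ is small. Using $s/s_\eps\le 1$ and $|s^2-\eps|\le s^2+\eps=s_\eps^2$, I would estimate
\begin{equation*}
  \big\|\nabla^2 s_\eps - 4\,\nabla z_\eps\otimes\nabla z_\eps\big\|^2
  \le 2\,\|\nabla^2 s\|^2 + \tfrac{2\,\|\nabla s\|^4}{s^2+\eps}.
\end{equation*}
The second summand is controlled using the relation $|\nabla s|^2 = 4\,s\,|\nabla z|^2$ on $\{s>0\}$: there, $\|\nabla s\|^4/(s^2+\eps)\le \|\nabla s\|^4/s^2 = 16\,|\nabla z|^4$, while on $\{s=0\}$ the numerator vanishes a.e. by the Sobolev level-set property. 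Thus $2\,\|\nabla^2 s\|^2 + 32\,|\nabla z|^4 \in L^1(\setR^d)$ is an $\eps$-independent dominator, integrability coming from $s\in W^{2,2}(\setR^d)$ and $z\in W^{1,4}(\setR^d)$. This simultaneously yields $\nrg^\eps(u)<\infty$ (uniformly in $\eps$) and, via Lebesgue's dominated convergence theorem, the identity $\lim_{\eps\downarrow 0}\nrg^\eps(u)=\nrgm_0(u\leb)$.
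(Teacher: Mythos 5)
Your proof is correct and follows essentially the same strategy as the paper's: compute explicit chain-rule identities for the $\eps$-regularized quantities in terms of the unregularized ones, observe pointwise a.e.\ convergence, and conclude by dominated convergence. The main difference is parametrization. The paper writes
\begin{align*}
  \nabla^2\sqrt{u+\eps}
  = \left[\tfrac u{u+\eps}\right]^{1/2}\!\left(\nabla^2\sqrt u +4\tfrac{\eps}{u+\eps}\nabla\sqrt[4]u\otimes\nabla\sqrt[4]u\right),
  \qquad
  \nabla\sqrt[4]{u+\eps} = \left[\tfrac u{u+\eps}\right]^{3/4}\nabla\sqrt[4]u,
\end{align*}
where \emph{every} coefficient is manifestly bounded by $4$, so the dominators ($\|\nabla^2\sqrt u\|+4|\nabla\sqrt[4]u|^2$ in $L^2$ and $|\nabla\sqrt[4]u|$ in $L^4$) are immediate and each factor is handled separately. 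You instead write the full matrix $\ell_\eps$ in terms of $\nabla^2 s$ and $\nabla s\otimes\nabla s$, whose coefficient $\eps/s_\eps^3$ blows up like $\eps^{-1/2}$ on $\{s\approx 0\}$; you then recover a uniform dominator by the trick $|\nabla s|^2=4s|\nabla z|^2$, which cancels the dangerous factor $s_\eps^{-2}$ against the $s^2$ hidden in $|\nabla s|^4$. Both arguments are sound; the paper's choice of ``$\nabla\sqrt[4]u$ basis'' makes the boundedness automatic and is a little slicker, while your derivation is more explicit about the level-set considerations on $\{s=0\}$ (which the paper defers to Remark \ref{rem:nrgm}). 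One small hygiene point: as in the paper, you should note that the chain rule is being applied to a Sobolev composition with a smooth sublinear function of globally bounded derivatives — that is what puts $s_\eps$ into $W^{2,2}_\loc$ to begin with and licenses the pointwise formulas.
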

\begin{proof}
  By definition of $\nrgm_0$, we have that $\sqrt u\in W^{2,2}(\setR^d)$ and $\sqrt[4]u\in W^{1,4}(\setR^d)$.
  According to the chain rule for concatenation of $u$ with
  the smooth functions $r\mapsto \sqrt{r+\eps}$ and $r\mapsto\sqrt[4]{r+\eps}$
  that are sublinear with globally bounded first and second derivatives,
  also $\sqrt{u+\eps}\in W^{2,2}_\loc(\setR^d)$ and $\sqrt[4]{u+\eps}\in W^{1,4}_\loc(\setR^d)$,
  and
  \begin{align*}
    \nabla^2\sqrt{u+\eps}
    &= \left[\frac u{u+\eps}\right]^{1/2}
      \left(\nabla^2\sqrt u +4\frac{\eps}{u+\eps}\nabla\sqrt[4]u\otimes\nabla\sqrt[4]u\right), \\
    \nabla\sqrt[4]{u+\eps}
    &= \left[\frac u{u+\eps}\right]^{3/4}\nabla\sqrt[4]u.
  \end{align*}
  In particular, $\nabla^2\sqrt{u+\eps}\in L^2(\setR^d)$ and $\nabla\sqrt[4]{u+\eps}\in L^4 (\setR^d)$,
  and so $\nrg^\eps(u)<\infty$.
  Further, the quotients $u/(u+\eps)$ and $\eps/(u+\eps)$ are bounded by one,
  and converge to one and to zero respectively, in measure $u\leb$-a.e.
  It follows by dominated convergence that
  \begin{align*}
    \nabla^2\sqrt{u+\eps} \to \nabla^2\sqrt u\quad\text{in $L^2(\setR^d)$},
    \qquad
    \nabla\sqrt[4]{u+\eps} \to \nabla\sqrt[4]u\quad\text{in $L^4(\setR^d)$},
  \end{align*}
  and therefore also $\nrg^\eps(u)\to\nrgm_0(u\leb)$.
\end{proof}
We are now going to study certain properties of $\nrg^\eps$ along solutions of the heat flow.
More specifically, let some probability density $u\in L^1(\setR^d)$ be given,
and consider for each $r>0$:
\begin{align}
  \label{eq:heatbyconv}
  w_r := K_r\ast u \quad \text{for} \quad K_r(z)=(4\pi r)^{-d/2}\exp\left(-\frac{|z|^2}{4r}\right).
\end{align}
Note that $w_r$ is the unique solution to the initial value problem
\begin{align*}
  \partial_rw_r=\Delta w_r, \quad w_0=u.
\end{align*}
Our first observation is that $r\mapsto\nrg^\eps(w_r)$ has the expected limit for $r\downarrow0$.
\begin{lemma}
  \label{lem:epscont}
  Assume that $u\leb \in \Dom{\nrgm_0}$,
  define $w_r$ as in \eqref{eq:heatbyconv} above.
  Then $\lim_{r\downarrow0}\nrg^\eps(w_r)=\nrg^\eps(u)$.
\end{lemma}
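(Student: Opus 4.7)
My plan is to reduce the statement to an $L^2$-continuity claim by exploiting that the $\eps$-regularization bounds $v+\eps$ uniformly away from zero, turning $\sqrt{\cdot+\eps}$ and $\sqrt[4]{\cdot+\eps}$ into compositions with globally smooth functions that have bounded derivatives on the range of $v+\eps$.

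First, I would upgrade the regularity of $u$. Under the standing assumption $\dimens\le3$, the hypothesis $\sqrt u\in W^{2,2}(\setR^d)$ combined with the Sobolev embeddings $W^{2,2}\hookrightarrow L^\infty$ and $W^{2,2}\hookrightarrow W^{1,4}$ yields $\sqrt u\in L^\infty$ and $\nabla\sqrt u\in L^4$. The product identities $\nabla u=2\sqrt u\,\nabla\sqrt u$ and $\nabla^2u=2\sqrt u\,\nabla^2\sqrt u+2\nabla\sqrt u\otimes\nabla\sqrt u$ then place $u$ in $W^{2,2}(\setR^d)$. Standard properties of the heat kernel give $w_r\to u$ in $W^{2,2}(\setR^d)$ as $r\downarrow0$, and by Sobolev embedding this upgrades to $w_r\to u$ in $L^\infty(\setR^d)$ and $\nabla w_r\to\nabla u$ in $L^4(\setR^d)$.

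Next, applying the standard $W^{2,2}$-chain rule to $s\mapsto\sqrt{s+\eps}$ and $s\mapsto\sqrt[4]{s+\eps}$, both smooth with bounded first and second derivatives on $[0,\|v\|_{L^\infty}]$, I would derive the unified representation
\[
\Psi_\eps(v):=\nabla^2\sqrt{v+\eps}-4\nabla\sqrt[4]{v+\eps}\otimes\nabla\sqrt[4]{v+\eps}=\frac{\nabla^2v}{2\sqrt{v+\eps}}-\frac{\nabla v\otimes\nabla v}{2(v+\eps)^{3/2}},
\]
valid for every $v\in W^{2,2}(\setR^d)$ with $v\ge0$. Compatibility with the formulas of Lemma~\ref{lem:nrgeps} amounts to a short algebraic manipulation on $\{v>0\}$, using that $\nabla v=0$ a.e.\ on $\{v=0\}$, and yields $\nrg^\eps(v)=2\|\Psi_\eps(v)\|_{L^2}^2$ for both $v=u$ and $v=w_r$. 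It therefore suffices to show $\Psi_\eps(w_r)\to\Psi_\eps(u)$ in $L^2(\setR^d)$. The coefficients $(v+\eps)^{-1/2}$ and $(v+\eps)^{-3/2}$ are uniformly bounded by $\eps^{-1/2}$ and $\eps^{-3/2}$ and Lipschitz in $v$ on bounded intervals; together with the uniform convergence $w_r\to u$, the strong convergence $\nabla^2 w_r\to\nabla^2 u$ in $L^2$, and $\nabla w_r\otimes\nabla w_r\to\nabla u\otimes\nabla u$ in $L^2$ (the latter from $\nabla w_r\to\nabla u$ in $L^4$), a triangle-inequality argument delivers the $L^2$-convergence of $\Psi_\eps(w_r)$.

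The main obstacle is the initial regularity bootstrap $\sqrt u\in W^{2,2}\Rightarrow u\in W^{2,2}$: it is precisely here that the dimensional restriction $\dimens\le3$ enters, and without it the chain-rule representation of $\Psi_\eps(u)$ in terms of $\nabla u$ and $\nabla^2u$ would have to be replaced by the less convenient $\sqrt u$-based formulas of Lemma~\ref{lem:nrgeps}. Once $u$ has been placed in $W^{2,2}(\setR^d)$, the remainder is a routine fixed-$\eps$ continuity argument that exploits the fact that $\eps>0$ is kept fixed throughout.
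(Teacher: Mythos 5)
Your proof is correct and follows essentially the same route as the paper: upgrade $u$ to $W^{2,2}(\setR^d)\cap W^{1,4}(\setR^d)$ using $d\le 3$, obtain $w_r\to u$ in that space, then apply the $\eps$-regularized chain rule with uniformly bounded coefficients and pass to the limit in $L^2$. Your unified expression $\Psi_\eps$ is just an algebraic repackaging of the paper's two separate formulas for $\nabla^2\sqrt{\cdot+\eps}$ and $\nabla\sqrt[4]{\cdot+\eps}$, so there is no substantive difference.
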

\begin{proof}
  We present a proof that heavily uses the dimensionality restriction $d\le3$.
  
  By definition of $\nrgm_0$, we have that $\sqrt u\in W^{2,2}(\setR^d)$ and $\sqrt[4]u\in W^{1,4}(\setR^d)$.
  First, we show that
  \begin{align}
    \label{eq:uregular}
    u\in  W^{2,2}(\setR^d)\cap W^{1,4}(\setR^d).    
  \end{align}
  For this, we use that since $d\le 3$, one has $W^{2,2}(\setR^d)\hookrightarrow L^\infty(\setR^d)$,
  and so $u$ is globally bounded.
  Now, by the chain rule,
  \begin{align*}
    \nabla u &= \nabla\big(\sqrt[4]{u}^4\big) = 4\sqrt[4]{u}^3\nabla\sqrt[4]{u}, \\
    \nabla^2u&= \nabla^2\big(\sqrt u^2\big) = 2\sqrt u\nabla^2\sqrt u + 2\nabla\sqrt u\otimes\nabla\sqrt u
               = 2 \sqrt u\nabla^2\sqrt u + 8\sqrt[4]u^2\nabla\sqrt[4]u\otimes\nabla\sqrt[4]u,
  \end{align*}
  which shows $\nabla u\in L^4(\setR^d)$ and $\nabla^2u\in L^2(\setR^d)$.
  Interpolation with $u\in L^1(\setR^d)$ yields \eqref{eq:uregular}.

  By the representation \eqref{eq:heatbyconv} of $w_r$ as convolution with $K_r$,
  and thanks to Young's inequality, it follows from \eqref{eq:uregular} that
  \begin{align}
    \label{eq:wratzero}
    w_r\to u \quad\text{in $W^{2,2}(\setR^d)\cap W^{1,4}(\setR^d)$}.
  \end{align}
  Recall that $w_r$ is smooth and positive.
  It follows by the chain rule that:
  \begin{align*}
    \nabla\sqrt[4]{w_r+\eps} &=  \frac14(w_r+\eps)^{-3/4}\nabla w_r, \\
    \nabla^2\sqrt{w_r+\eps} &= \frac12(w_r+\eps)^{-1/2}\nabla^2w_r -\frac14(w_r+\eps)^{-3/2}\nabla w_r\otimes\nabla w_r.
  \end{align*}
  Combining \eqref{eq:wratzero} with the fact that, for any $p>0$,
  the function $(w_r+\eps)^{-p}$ is bounded by $\eps^{-p}$,
  and converges in measure to $(u+\eps)^{-p}$ as $r\downarrow0$,
  we conclude that
  \begin{align*}
    \nabla\sqrt[4]{w_r+\eps}
    &\to \frac14(u+\eps)^{-3/4}\nabla u = \nabla\sqrt[4]{u+\eps}
      \quad \text{in $L^4(\setR^d)$}, \\
    \nabla^2\sqrt{w_r+\eps}
    &\to \frac12(u+\eps)^{-1/2}\nabla^2u -\frac14(u+\eps)^{-3/2}\nabla u\otimes\nabla u = \nabla^2\sqrt{u+\eps}
      \quad\text{in $L^2(\setR^d)$},
  \end{align*}
  and so we have that
  \begin{align*}
    \nabla^2\sqrt{w_r+\eps}-4\nabla\sqrt[4]{w_r+\eps}\otimes\nabla\sqrt[4]{w_r+\eps}
    \to
    \nabla^2\sqrt{u+\eps}-4\nabla\sqrt[4]{u+\eps}\otimes\nabla\sqrt[4]{u+\eps}
    \quad \text{in $L^2(\setR^d)$},
  \end{align*}
  which is the claim.
\end{proof}
The next result is our core computation,
namely of the derivative of $\nrg^\eps(w_r)$ at $r>0$.
\begin{lemma}
  \label{lem:cpriori}
  Given a probability density $u$, define $w_r$ by \eqref{eq:heatbyconv}. 
  Then $r\mapsto \nrg^\eps(w_r)$ is differentiable at each $r>0$,
  with
  \begin{align}
    \label{eq:cpriori}
    -\frac{\dn}{\dd r}\nrg^\eps(w_r)
    \ge \kappa\intrn\big(\interleave\nabla^3\sqrt{w_r+\eps}\interleave^2+|\nabla\sqrt[6]{w_r+\eps}|^6\big)\dd x.
  \end{align}
\end{lemma}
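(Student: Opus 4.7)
The plan is to reduce the statement to the smooth positive setting formally handled in \cite{BJM}, exploiting that the heat-kernel convolution combined with the $\eps$-shift produces an object on which every operation one would like to perform is licit. Fix $r > 0$ and set $v = v_r := w_r + \eps$. Since $K_r$ and each of its derivatives is a Schwartz function, Young's inequality gives $\nabla^k w_r \in L^p(\setR^d)$ for every $k \ge 0$ and every $p \in [1,\infty]$, with bounds locally uniform in $r$. Consequently $v$ is smooth, bounded, satisfies $v \ge \eps > 0$ pointwise, and inherits the linear heat equation $\partial_r v = \Delta v$ (because $\Delta\eps = 0$).

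Next I would invoke the identity recalled in Remark \ref{rem:nrgm} to obtain the pointwise representation
\begin{align*}
\nabla^2 \sqrt v - 4 \nabla \sqrt[4]{v} \otimes \nabla \sqrt[4]{v} = \tfrac12 \sqrt v \, \nabla^2 \log v,
\end{align*}
so that $\nrg^\eps(w_r) = \tfrac12 \intrn v \norm{\nabla^2 \log v}^2 \dd x$. This is precisely the functional whose dissipation along the heat flow is computed in \cite{BJM}. Differentiation under the integral in $r$ is justified by dominated convergence on any compact $[r_0,r_1]\subset(0,\infty)$, using the uniform bound $v^{-\alpha}\le\eps^{-\alpha}$ together with the $L^p$ bounds on $\nabla^j w_r$. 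After substituting $\partial_r v = \Delta v$, one reproduces the chain of integrations by parts from \cite{BJM} verbatim, with $v$ playing the role of their smooth periodic solution. The algebra is identical and produces the right-hand side of \eqref{eq:cpriori} with the same dimension-only constant $\kappa$.

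The main obstacle, and the sole place where the argument genuinely departs from \cite{BJM}, is to rule out boundary contributions in the integrations by parts on the whole space. Every integrand encountered along the way is a polynomial in the $\nabla^j v = \nabla^j w_r$ multiplied by a negative power $v^{-\alpha}$; since $v \ge \eps$ the latter is bounded by $\eps^{-\alpha}$, and each factor $\nabla^j w_r$ lies in every $L^p(\setR^d)$, so H\"older's inequality places each integrand and each vector field whose divergence is integrated in $W^{1,1}(\setR^d)\cap L^\infty(\setR^d)$. The appendix version of Gauss's theorem (Lemma \ref{lem:Gauss}) then applies at every step without contribution from infinity. This is exactly what makes the $\eps$-regularization indispensable: it sacrifices the probability normalization, but in return supplies the uniform pointwise positivity that, together with the Schwartz-type decay of $\nabla^j w_r$, performs the same role played by periodic boundary conditions in \cite{BJM}.
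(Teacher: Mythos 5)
Your proposal follows exactly the strategy of the paper's proof: set $v = w_r + \eps$ (the paper works with $y_r = \log(w_r+\eps)$, which is the same), use smoothness, pointwise positivity $v\ge\eps$ and $L^p$-integrability of all spatial derivatives of $w_r$ from Young's inequality, justify differentiation under the integral and all integrations by parts via Lemma \ref{lem:Gauss}, and then invoke the algebraic pointwise inequality from \cite{BJM} to conclude with the same dimension-dependent constant $\kappa$. The approach and the key observations (in particular, that the $\eps$-shift and the Schwartz decay together replace the periodic boundary conditions of \cite{BJM}) coincide with those of the paper.
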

\begin{proof}
  The heat kernel $K_s(z)$ from \eqref{eq:heatbyconv}
  is smooth and positive at every $s>0$ and $z\in\setR^d$,
  and all spatial derivatives $\nabla^\alpha K_s$ with arbitrary multi-index $\alpha$
  belong to any $L^p(\setR^d)$ with $p\in[1,\infty]$.
  Consequently,
  the function $\setR_{>0}\times\setR^d\ni(t;x)\mapsto w_r(x)$ is positive and smooth,
  each $w_r$ is a probability density,
  and, thanks to Young's integral inequality,
  all spatial derivatives $\nabla^\alpha w_r$ are in any $L^p(\setR^d)$.

  Next, observe that $y_r:=\log(w_r+\eps)$ is a smooth function as well,
  that satisfies
  \begin{align}
    \label{eq:heateps}
    \partial_re^{y_r}=\Delta e^{y_r}
  \end{align}
  in the classical sense.
  Moreover, despite the fact that $y_r$ itself is clearly not integrable on $\setR^d$,
  its spatial derivatives $\partial^\alpha y_r$ belong to any $L^p(\setR^d)$;
  the latter is most easily seen from the fact that each $\partial^\alpha y_r$
  can be written as a linear combination of products of terms $\partial^\beta w_r/(w_r+\eps)$,
  with suitable multi-indices $\beta$, where $1\le|\beta|\le|\alpha|$.
  This further means that also $e^{y_r}=w_r+\eps$ times any linear combination of products
  of spatial derivatives of $y_r$ are integrable.
  In particular, Gauss' theorem as stated in Lemma \ref{lem:Gauss} in the Appendix
  is applicable to vector fields built from such functions.
  
  We can now calculate the derivative of $\nrgm(w_r\leb)$,
  which we write equivalently in the form
  \begin{align*}
    \nrg^\eps(w_r) = \intrn (w_r+\eps)\big\|\nabla^2\log(w_r+\eps)\big\|^2\dd x
    = \intrn e^{y_r}\|\nabla^2y_r\|^2\dd x.
  \end{align*}  
  Thanks to the aforementioned smoothness of $y_r$
  and the admissibility of integration by parts via Lemma \ref{lem:Gauss} from the Appendix,
  we obtain --- recalling \eqref{eq:heateps}, and supressing $y$'s sub-index $r>0$ from now on ---
  \begin{equation}
    \label{eq:nrgdiss}    
    \begin{split}
      -\frac{\dn}{\dd r}\nrg^\eps(w_r)
      &= -\frac12\intrn \Delta e^y\|\nabla^2y\|^2\dd x
      - \intrn e^y \nabla^2 y:\nabla^2(e^{-y}\Delta e^y)\dd x\\
      &= \intrn \nabla e^y\cdot\nabla^3y:\nabla^2y\dd x
      - \intrn e^y \nabla^2 y:\nabla^2\big(\Delta y+|\nabla y|^2\big)\dd x \\
      &= \intrn e^y\nabla y\cdot\nabla^3y:\nabla^2y\dd x
      +\intrn \nabla\big(e^y\nabla^2 y)\vdots\nabla^3y\dd x
      - 2\intrn e^y\nabla^2y:\nabla(\nabla^2y\cdot\nabla y) \dd x \\
      &= \intrn e^y \interleave\nabla^3y\interleave^2\dd x
      - 2\intrn e^y\nabla^2 y:\big([\nabla^2y]^2\big)\dd x.
    \end{split}
  \end{equation}
  From this point on, we proceed in analogy to the proof of \cite[inequality (19)]{BJM}.
  That means, we define (ad hoc) the smooth and integrable vector field
  \begin{align*}
    \velo &= e^y\big\{\big[2|\nabla y|^2+\Delta y+5\nabla y\cdot\nabla^2y\cdot\nabla y)+5\nabla y\cdot\nabla\Delta y\big]\nabla y \\
    &\quad + \big[3|\nabla y|^2\nabla y+11\nabla\Delta y+24\nabla y\cdot\nabla^2y\big]\cdot\nabla^2y
    -\big[5\nabla y\otimes\nabla y+11\nabla^2y\big]:\nabla^3y\big\}.
  \end{align*}
  The proof of \cite[inequality (19)]{BJM} amounts to showing that
  \begin{align}
    \label{eq:monster}
    12e^y\big[\interleave\nabla^3y\interleave^2-2\nabla^2 y:\big([\nabla^2y]^2\big)\big]
    +\dv\velo
    \ge \kappa \big[2^6\interleave\nabla^3e^{y/2}\interleave^2+6^6|\nabla e^{y/6}|^6\big]
  \end{align}
  holds \emph{pointwise},
  i.e., $y$'s domain of definition plays no role here.
  The key idea in proving \eqref{eq:monster} is that after division by $e^y$,
  it becomes an inequality between polynomials in the first, second and third derivatives of $y$,
  hence the proof of its validity is a (cumbersome) algebraic problem.
  For more details on the choice of $\velo$
  and the general concepts behind the algebraic method for proving entropy dissipation inequalities,
  we refer the reader to \cite{BJM,JuMa}.

  Integration of \eqref{eq:monster} on $\setR^d$,
  recalling \eqref{eq:nrgdiss}, and making use of Lemma \ref{lem:Gauss},
  leads to \eqref{eq:cpriori}. 
\end{proof}
We are finally in the position to prove the main estimate \eqref{eq:bpriori}.
\begin{proof}[Proof of Proposition \ref{prp:apriori}]
  This is another application of the flow interchange method, see Lemma \ref{lem:flowinterchange}.
  According to Example \ref{xmp:flow},
  the heat flow given by $\flow_r\mu=K_r\ast\mu$ defines a $0$-flow on $\propo$ for the (unperturbed) entropy $\ent_0$.
  Therefore,
  \begin{align}
    \label{eq:fromevi}
    \liminf_{r\downarrow0}\left(-\frac1r\big[\nrgm_0(w_r\leb)-\nrgm_0(\mu_\tau^n)\big]\right)
    \le \frac{\ent_0(u_\tau^{n-1})-\ent_0(u_\tau^n)}\tau.
  \end{align}
  Formally, the left-hand side above is minus the derivative of $r\mapsto\nrgm_0(w_r\leb)$ at $r=0$,
  and the eventual goal is to control this in the spirit of \eqref{eq:cpriori}, for $\eps=0$.
  The main technical obstacle that prevents us from carrying out this differentiation 
  and to conclude directly \eqref{eq:bpriori} from here
  is the possible irregularity of $\nrgm_0(w_r\leb)$ at $r=0$.
  It is not even clear that $\nrgm_0(w_r\leb)\to\nrgm_0(\mu_\tau^n)$ as $r\downarrow0$.
  Indeed, while lower semi-continuity is known from Proposition \ref{prp:savarelsc},
  upper semi-continuity might fail.
  The problem is that --- due to the non-differentiability of $s\mapsto\sqrt[4]s$ at $s=0$ ---
  one cannot conclude from $\sqrt[4]{u_\tau^n}\in W^{1,4}(\setR^d)$
  that $\nabla\sqrt[4]{w_r}\to\nabla\sqrt[4]{u_\tau^n}$ in $L^4(\setR^d)$.
  We tackle this problem by approximation of $\nrgm_0$ by $\nrg^\eps$,
  for which continuity at $r=0$ has been shown in Lemma \ref{lem:epscont}.

  It follows from Lemma \ref{lem:cpriori} that $r\mapsto\nrg^\eps(w_r)$ is differentiable at every $r>0$,
  with derivative given in \eqref{eq:cpriori}.
  Thanks to Lemma \ref{lem:epscont}, we can apply the fundamental theorem of calculus to obtain that
  \begin{align*}
    -\frac1{\bar r}\big[\nrg^\eps(w_{\bar r})-\nrg^\eps(u_\tau^n)\big]
    & =\fint_0^{\bar r}\left(-\frac{\dn}{\dd r}\nrg^\eps(w_r)\right)\dd r \\
    &\ge \kappa\fint_0^{\bar r}\intrn\big(\interleave\nabla^3\sqrt{w_r+\eps}\interleave^2+|\nabla\sqrt[6]{w_r+\eps}|^6\big)\dd x\dd r
  \end{align*}
  We pass to the limit $\eps\downarrow0$, using Lemma \ref{lem:nrgeps} on the left hand side and Fatou's lemma respectively the lower semi-continuity of norms on the right hand side.
  With the latter we subsequently obtain for $\bar r\downarrow0$
  \begin{align*}
    \liminf_{\bar r\downarrow0}\left(-\frac1{\bar r}\big[\nrgm_0(w_{\bar r})-\nrgm_0(u_\tau^n)\big]\right)
    &\ge \kappa\liminf_{\bar r\downarrow0}\fint_0^{\bar r}\intrn\big(\interleave\nabla^3\sqrt{w_r}\interleave^2+|\nabla\sqrt[6]{w_r}|^6\big)\dd x\dd r \\
    &\ge \kappa\intrn\big(\interleave\nabla^3\sqrt{u_\tau^n}\interleave^2+|\nabla\sqrt[6]{u_\tau^n}|^6\big)\dd x.
  \end{align*}
  Plugging this into \eqref{eq:fromevi} yields \eqref{eq:bpriori} in the case $\lambda=0$.

  The changes induced by passing from $\lambda=0$ to $\lambda>0$ means
  to add on the right-hand side of \eqref{eq:cpriori} the contribution
  \begin{align*}
    \lambda^3\frac{\dn}{\dd r}\intrn |x|^2\dd\nu_r
    = \lambda^3\intrn|x|^2\Delta w_r \dd x
    = \lambda^3\intrn 2dw_r\dd x = 2d\lambda^3,
  \end{align*}
  which is independent of $r>0$.
  The passage to the limit $r\downarrow0$ is trivial here.
  In combination with the the result for $\lambda=0$,
  we arrive at \eqref{eq:bpriori}.
\end{proof}


\subsection{A universal bound on the energy}
In the spirit of \cite[Theorem 1.4]{GST}, we prove the following bound on the energy.
\begin{proposition}
  \label{prp:universal}
  There is a constant $\ebnd$, expressible in terms of
  the initial entropy value $\ent_0(u_0)$ and the initial second moment $\mom(u_0)$,
  such that, for each $N=1,2,\ldots$,
  \begin{align}
    \label{eq:universal}
    \nrgm_\lambda(\mu_\tau^N) \le \ebnd\big(1+(N\tau)^{-2/3}\big).
  \end{align}
\end{proposition}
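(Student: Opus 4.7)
The plan is to combine the discrete dissipation estimate \eqref{eq:bpriori} with a Gagliardo--Nirenberg interpolation relating $\nrgm_0$ to $\|\nabla^3\sqrt{u}\|_{L^2}$, the monotonicity \eqref{eq:ce-mono}, and an auxiliary discrete second-moment bound obtained via flow interchange with the rescaling flow from Example \ref{xmp:flow}.

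First I would exploit that each $s_n:=\sqrt{u_\tau^n}$ has unit $L^2$-norm, since $u_\tau^n$ is a probability density. The Gagliardo--Nirenberg inequality in $\setR^d$ with $d\le 3$, applied with $j=2$, $m=3$, $p=r=q=2$, forces the exponent $\theta=2/3$ and produces $\|\nabla^2 s_n\|_{L^2}^2\le C\|\nabla^3 s_n\|_{L^2}^{4/3}$; combined with \eqref{eq:nrgabove} this gives $\nrgm_0(\mu_\tau^n)\le C\|\nabla^3 s_n\|_{L^2}^{4/3}$. Summing \eqref{eq:bpriori} over $n=1,\ldots,N$ and invoking the classical Carleman bound $-\ent_0(\mu)\le C(1+\mom(\mu))$ (a one-line computation comparing $u\log u$ with the logarithm of a Gaussian reference density) then produces
\[
\tau\sum_{n=1}^N\|\nabla^3 s_n\|_{L^2}^2\le C\big(\ent_0(u_0)+1+\mom(\mu_\tau^N)+N\tau\big).
\]

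Next I would extract a pointwise bound by averaging: monotonicity \eqref{eq:ce-mono} gives $\nrgm_\lambda(\mu_\tau^N)\le \frac1N\sum_{n=1}^N \nrgm_\lambda(\mu_\tau^n)$. Splitting $\nrgm_\lambda=\nrgm_0+\lambda^3\mom$ and applying Jensen's inequality for the concave map $x\mapsto x^{2/3}$ to the $\nrgm_0$-part yields
\[
\frac1N\sum_{n=1}^N\nrgm_0(\mu_\tau^n)\le C\Big(\frac1N\sum_{n=1}^N\|\nabla^3 s_n\|_{L^2}^2\Big)^{2/3}\le C(N\tau)^{-2/3}\big(\ent_0(u_0)+1+\mom(\mu_\tau^N)+N\tau\big)^{2/3},
\]
which already produces the desired $(N\tau)^{-2/3}$ scaling, modulo controlling $\mom(\mu_\tau^N)$. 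For the moment, I would apply the flow interchange inequality \eqref{eq:flowinterchange} with $\aux=\frac12\mom$ and the $1$-flow from Example \ref{xmp:flow}. A direct scaling computation gives $\nrgm_\lambda(\flow_\s\mu)=e^{4\s}\nrgm_0(\mu)+e^{-2\s}\lambda^3\mom(\mu)$, whose derivative at $\s=0$ equals $4\nrgm_0(\mu)-2\lambda^3\mom(\mu)$. Substituted into \eqref{eq:flowinterchange} this yields the discrete recursion
\[
(1+4\tau\lambda^3)\,\mom(\mu_\tau^n)\le \mom(\mu_\tau^{n-1})+8\tau\,\nrgm_0(\mu_\tau^n),
\]
whose iteration bounds $\mom(\mu_\tau^N)$ in terms of $\mom(u_0)$ and cumulative values of $\nrgm_0$; for $\lambda>0$ the geometric factor even furnishes a uniform-in-$N$ bound that simultaneously absorbs the $\lambda^3\mom$ portion of $\nrgm_\lambda$ into a universal constant.

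The main obstacle is closing the self-referential system formed by the last two displays: the energy estimate contains $\mom(\mu_\tau^N)^{2/3}$ on the right, while the moment estimate contains cumulative $\nrgm_0$ on the right. The bootstrap succeeds precisely because the Gagliardo--Nirenberg exponent $2/3$ is strictly less than $1$, so Young's inequality $\mom^{2/3}\le \epsilon\,\mom+C_\epsilon$ permits one to absorb the unknown into itself with an arbitrarily small coefficient. This produces a bound on $\mom(\mu_\tau^N)$ in terms of $\ent_0(u_0)$, $\mom(u_0)$, $\lambda$, and $N\tau$; substituting back then yields \eqref{eq:universal} with $\ebnd$ depending only on the claimed data.
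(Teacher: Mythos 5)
Your overall strategy matches the paper's almost step for step: the heat-flow interchange estimate \eqref{eq:bpriori}, the Gagliardo--Nirenberg interpolation bounding $\nrgm_0$ by $\|\nabla^3\sqrt{u}\|_{L^2}^{4/3}$ (this is the paper's Lemma~\ref{lem:improtechnical}, cf.\ \eqref{eq:GNS}), the lower bound on entropy by the second moment, the moment recursion from flow interchange with the dilation flow, and a Young-type absorption to close the coupled system (which is exactly how Lemma~\ref{lem:impro} is proved), followed by monotonicity of $\nrgm_\lambda$ to convert cumulative bounds into a pointwise one. Your use of Jensen's inequality for the concave map $x\mapsto x^{2/3}$ to average $\|\nabla^3\sqrt{u_\tau^n}\|_{L^2}^{4/3}$ is mathematically equivalent to the paper's device of raising $\nrgm_\lambda(\mu_\tau^N)$ to the power $3/2$ before comparing with the telescoped dissipation; the two are interchangeable, so this is a cosmetic rather than a structural difference.

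There is a gap at the very end, however. The averaged inequality $\nrgm_\lambda(\mu_\tau^N)\le\frac1N\sum_{n=1}^N\nrgm_\lambda(\mu_\tau^n)$ splits into the $\nrgm_0$ contribution, which your Jensen step correctly controls with the $(N\tau)^{-2/3}$ scaling, and a contribution $\lambda^3\frac1N\sum_{n=1}^N\mom(\mu_\tau^n)$. For $N\tau$ large, the recursion plus Young only yields a moment bound that grows linearly in $N\tau$, so the averaged moment term does not stay bounded; this is inconsistent with the constant $\ebnd$ in \eqref{eq:universal}. The appeal to the ``geometric factor'' $(1+4\tau\lambda^3)$ to obtain a uniform-in-$N$ moment bound is not actually established by the estimates you have in hand: the cumulative bound $\tau\sum_k\|\nabla^3\sqrt{u_\tau^k}\|_{L^2}^2\lesssim 1+N\tau$ does not by itself control the geometrically weighted sum arising from iterating the recursion, and using a pointwise bound on $\nrgm_0(\mu_\tau^k)$ there would be circular, since that bound is precisely what Proposition~\ref{prp:universal} asserts. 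The clean and elementary fix, which the paper employs, is to observe that it suffices without loss of generality to prove the $(N\tau)^{-2/3}$ bound for $N\tau\le 1$, where all moments $\mom(\mu_\tau^n)$ for $n\le N$ are controlled uniformly in terms of $\ent_0(u_0)$, $\mom(u_0)$, and $\lambda$; for $N\tau>1$ one then invokes the monotonicity \eqref{eq:ce-mono} of $\nrgm_\lambda$ to reduce to an index $N'$ with $N'\tau\approx 1$. Inserting that reduction makes your bootstrap close exactly as intended.
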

This proposition is a consequence of the following a priori estimates.
\begin{lemma}
  \label{lem:impro}
  There is a universal constant $A$ such that,
  for each $N=1,2,\ldots$,
  \begin{align}
    \label{eq:improends}
    \frac\kappa2\tau \sum_{n=1}^N
    \intrn \big(\interleave\nabla^3 \sqrt{u_\tau^n}\interleave^2+|\nabla \sqrt[6]{u_\tau^n}|^6 \big)\dd x 
    &\le \pi\mom(u_0) + \ent_0(u_0) +(A+2d\lambda^3)N\tau, \\
    \label{eq:impromom}
    \pi\mom(u_\tau^N)
    &\le 2\pi\mom(u_0) + \ent_0(u_0) + 2(A+d\lambda^3)N\tau.
  \end{align}
\end{lemma}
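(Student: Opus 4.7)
My plan is to derive both estimates simultaneously by combining Proposition \ref{prp:apriori} with a second flow interchange estimate using the auxiliary functional $\aux := \pi\mom$. The critical choice is the coefficient $\pi$, tuned precisely to match the Gaussian comparison $\ent_0(u) \ge -\pi\mom(u)$ (which holds because $\int_{\setR^d} e^{-\pi|x|^2}\dd x = 1$), so that the two bounds click together at the end.

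By Example \ref{xmp:flow}(2) applied to the potential $V(x) = \pi|x|^2$ (whose Hessian is $2\pi\eins$), the functional $\aux = \pi\mom$ generates the rescaling $\partial_s\mu = \dv(2\pi x\mu)$ as a $2\pi$-flow, with explicit push-forward $u_s(x) = e^{2\pi ds}u(e^{2\pi s}x)$. A direct change of variables in the integrals defining $\nrgm_0$ and $\mom$ shows the scaling identities $\nrgm_0(\flow^\aux_s\mu) = e^{8\pi s}\nrgm_0(\mu)$ and $\mom(\flow^\aux_s\mu) = e^{-4\pi s}\mom(\mu)$, which yield
\[
  \frac{\dn}{\dd s}\nrgm_\lambda(\flow^\aux_s\mu)\bigg|_{s=0} = 8\pi\nrgm_0(\mu) - 4\pi\lambda^3\mom(\mu).
\]
To absorb the harmful $8\pi\nrgm_0(\mu)$ term into the favorable dissipation $D_n := \int_{\setR^d}(\interleave\nabla^3\sqrt{u_\tau^n}\interleave^2 + |\nabla\sqrt[6]{u_\tau^n}|^6)\dd x$ from \eqref{eq:improends}, I combine Remark \ref{rem:nrgm}'s bound $\nrgm_0(\mu) \le C\|\nabla^2\sqrt u\|_{L^2}^2$ with the Gagliardo-Nirenberg interpolation $\|\nabla^2 v\|_{L^2}^2 \le \|\nabla^3 v\|_{L^2}^{4/3}\|v\|_{L^2}^{2/3}$ on $\setR^d$, applied to $v = \sqrt{u_\tau^n}$ with $\|\sqrt{u_\tau^n}\|_{L^2} = 1$, and then Young's inequality to obtain $8\pi\nrgm_0(u_\tau^n) \le \tfrac{\kappa}{2}D_n + A$ for a universal constant $A$.

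Inserting this bound into the flow interchange Lemma \ref{lem:flowinterchange} (noting that $\alpha = 2\pi$ here), multiplying by $\tau$, and summing from $n = 1$ to $N$ produces, after dropping non-negative terms on the left,
\[
  \pi\big[\mom(u_\tau^N) - \mom(u_0)\big] \le \tfrac{\kappa\tau}{2}\sum_{n=1}^N D_n + AN\tau.
\]
Adding half of the summed Proposition \ref{prp:apriori}, namely $\tfrac{\kappa\tau}{2}\sum_{n=1}^N D_n + \tfrac{1}{2}\ent_0(u_\tau^N) \le \tfrac{1}{2}\ent_0(u_0) + d\lambda^3 N\tau$, cancels the $\tfrac{\kappa\tau}{2}\sum_{n=1}^N D_n$ contribution and yields
\[
  2\pi\mom(u_\tau^N) + \ent_0(u_\tau^N) \le 2\pi\mom(u_0) + \ent_0(u_0) + 2(A + d\lambda^3)N\tau.
\]
Applying the Gaussian comparison $\ent_0(u_\tau^N) \ge -\pi\mom(u_\tau^N)$ on the left cancels exactly one factor of $\pi\mom(u_\tau^N)$ and produces \eqref{eq:impromom} with the stated constants. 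Finally, feeding this bound on $\pi\mom(u_\tau^N)$ back into the summed Proposition \ref{prp:apriori} (via $-\ent_0(u_\tau^N) \le \pi\mom(u_\tau^N)$) and dividing by two yields \eqref{eq:improends}.

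The main technical obstacle will be justifying the scaling identity $\nrgm_0(\flow^\aux_s\mu) = e^{8\pi s}\nrgm_0(\mu)$ rigorously for measures $\mu = u\leb$ with only $\sqrt u \in W^{2,2}(\setR^d)$, via a direct change of variables in the integral defining $\nrgm_0$. The key conceptual insight making everything fit together cleanly is the precise choice $\aux = \pi\mom$: the coefficient $\pi$ is exactly what makes the Gaussian comparison cancel without slack, producing the tight universal constants claimed in the lemma.
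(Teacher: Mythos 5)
Your proof is correct and follows essentially the same route as the paper: flow interchange with the dilation flow to control $\mom(\mu_\tau^N)$ via $\sum\nrgm_0(\mu_\tau^n)$, Gagliardo--Nirenberg plus Young's inequality to absorb $\nrgm_0$ into the dissipation $D_n$ (this is precisely the paper's Lemma \ref{lem:improtechnical}, which you re-derive on the fly), and the Gaussian comparison $-\ent_0\le\pi\mom$ to close the estimate. The only differences are cosmetic: the paper parametrizes the dilation as a $1$-flow for $\tfrac12\mom$ rather than a $2\pi$-flow for $\pi\mom$, and it derives \eqref{eq:improends} before \eqref{eq:impromom}; your ``add half of Proposition \ref{prp:apriori}'' trick is a clean reshuffling of the same arithmetic and produces identical constants.
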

As a technical ingredient in the proofs of both Proposition \ref{prp:universal} and Lemma \ref{lem:impro} above,
we need:
\begin{lemma}
  \label{lem:improtechnical}
  There exists a constant $B$ such that for any $\mu=u\leb\in\Dom{\nrgm_0}$,  
  \begin{align}
    \label{eq:GNS}
    \big[\nrgm_0(\mu)\big]^{3/2} \le B \intrn \interleave\nabla^3 \sqrt u\interleave^2\dd x.
  \end{align}
  And consequently, 
  for each $\eps>0$, there exists a $C_\eps$ independent of $\mu$ such that
  \begin{align}
    \label{eq:improtechnical}
    \nrgm_0(\mu) \le \eps\intrn \interleave\nabla^3 \sqrt u\interleave^2\dd x + C_\eps.
  \end{align}
\end{lemma}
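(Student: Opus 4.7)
The plan is to combine the one-sided bound $\nrgm_0(\mu)\le C'\|\nabla^2\sqrt u\|_{L^2}^2$ already recorded in \eqref{eq:nrgabove} with a Gagliardo--Nirenberg interpolation between $\sqrt u$, $\nabla^2\sqrt u$, and $\nabla^3\sqrt u$. The decisive observation is that, since $u$ is a probability density, $\|\sqrt u\|_{L^2(\setR^d)}^2=\intrn u\dd x=1$, which turns the low-order factor in the interpolation into a harmless multiplicative constant.

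Write $s:=\sqrt u$; I may assume $\nabla^3 s\in L^2(\setR^d)$ as otherwise \eqref{eq:GNS} holds trivially. First I would invoke the Gagliardo--Nirenberg inequality
\[
\|\nabla^2 s\|_{L^2(\setR^d)}\le C_{\mathrm{GN}}\,\|s\|_{L^2(\setR^d)}^{1/3}\|\nabla^3 s\|_{L^2(\setR^d)}^{2/3},
\]
whose interpolation exponent $\theta=2/3$ is fixed by the scaling identity $\tfrac12=\tfrac{2}{\dimens}+\tfrac23(\tfrac12-\tfrac3{\dimens})+\tfrac13\cdot\tfrac12$. Since all three Lebesgue exponents equal $2$, this inequality holds in every dimension, so the standing assumption $\dimens\le 3$ is not actually needed in the present lemma. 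Substituting $\|s\|_{L^2}=1$, squaring, and plugging into \eqref{eq:nrgabove} yields
\[
\nrgm_0(\mu)\le C'\|\nabla^2 s\|_{L^2}^{2}\le C'\,C_{\mathrm{GN}}^{2}\,\|\nabla^3 s\|_{L^2}^{4/3},
\]
and raising both sides to the $3/2$ power produces \eqref{eq:GNS} with $B:=(C'C_{\mathrm{GN}}^2)^{3/2}$. The second assertion \eqref{eq:improtechnical} then follows at once from the intermediate estimate $\nrgm_0(\mu)\le B^{2/3}\|\nabla^3 s\|_{L^2}^{4/3}$ by Young's inequality applied to the conjugate pair $(3/2,3)$, which absorbs the sub-quadratic power into $\eps\,\|\nabla^3 s\|_{L^2}^{2}+C_\eps$ for any prescribed $\eps>0$, with $C_\eps$ depending only on $\eps$ and $B$.

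I do not foresee any genuine obstacle: the whole argument reduces to scale-invariant interpolation together with the probability normalization $\intrn u\dd x=1$. The only point worth double-checking is that Gagliardo--Nirenberg is applicable under the regularity $s\in W^{2,2}(\setR^d)$ augmented by the assumption $\nabla^3 s\in L^2(\setR^d)$ (equivalently $s\in W^{3,2}(\setR^d)$), which is classical and requires no further hypothesis on $\mu$.
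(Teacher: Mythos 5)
Your argument is correct and coincides with the paper's proof: both invoke the bound \eqref{eq:nrgabove}, the standard Gagliardo--Nirenberg/Sobolev interpolation $\|\nabla^2 f\|_{L^2}^2 \le A\|\nabla^3 f\|_{L^2}^{4/3}\|f\|_{L^2}^{2/3}$ applied to $f=\sqrt u$ together with the normalization $\|\sqrt u\|_{L^2}=1$, and then Young's inequality. Your side remark that the dimension restriction $d\le 3$ plays no role here is a correct (and mildly useful) observation, but otherwise this is the same proof.
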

\begin{proof}[Proof of Proposition \ref{prp:universal} from Lemmas \ref{lem:impro} and \ref{lem:improtechnical}]
  Without loss of generality, it suffices to prove
  \begin{align}
    \label{eq:pre-universal}
    \nrgm_\lambda(\mu) \le \ebnd\,(N\tau)^{-2/3}
  \end{align}
  for all $N$ such that $N\tau\le1$;
  the estimate \eqref{eq:universal} for larger $N$ is then a trivial consequence of the monotonicity \eqref{eq:ce-mono}.
  Choosing $N$ accordingly, the monotonicity \eqref{eq:ce-mono} of $\nrgm_\lambda$ implies that
  \begin{align*}
    N\tau\big[\nrgm_\lambda(\mu_\tau^N)\big]^{3/2}
    \le \tau\sum_{n=1}^N\big[\nrgm_\lambda(\mu_\tau^n)\big]^{3/2}.
  \end{align*}
  Substitute the elementary estimate
  \begin{align*}
    \big[\nrgm_\lambda(\mu_\tau^n)\big]^{3/2}
    \le \sqrt2\left(\nrgm_0(\mu_\tau^n)^{3/2} + \lambda^{9/2}\mom(\mu_\tau^n)^{3/2}\right)
  \end{align*}
  in the sum on the right-hand side and use \eqref{eq:GNS} to obtain
  \begin{align*}
    N\tau\big[\nrgm_\lambda(\mu_\tau^N)\big]^{3/2}
    \le \sqrt \tau\sum_{n=1}^N 2\left( B\intrn \interleave\nabla^3 \sqrt{u_\tau^n}\interleave^2\dd x + \lambda^{9/2}\mom(\mu_\tau^n)^{3/2}\right).
  \end{align*}
  Since we assume $n\tau\le N\tau\le 1$,
  the terms on the right-hand side are bounded thanks to \eqref{eq:improends} and \eqref{eq:impromom}.
  That is, $N\tau\big[\nrgm_\lambda(\mu_\tau^N)\big]^{3/2}\le C$,
  where $C$ depends just $\ent_0(u_0)$ and $\mom(u_0)$.
  Divide by $N\tau$ and take the power $2/3$
  to obtain \eqref{eq:pre-universal} with $\ebnd=C^{2/3}$.
\end{proof}
\begin{proof}[Proof of Lemma \ref{lem:impro}]
  Summation of \eqref{eq:bpriori} from $n=1$ to $n=N$ yields
  \begin{align}
    \label{eq:improstarts}
    \kappa\tau \sum_{n=1}^N
    \intrn \big(\interleave\nabla^3 \sqrt{u_\tau^n}\interleave^2
    +|\nabla \sqrt[6]{u_\tau^n}|^6 \big)\dd x
    \le \ent_0(u_0) - \ent_0(u_\tau^N) +2d\lambda^3N\tau.
  \end{align}
  We shall now derive a suitable lower bound on $\ent_0(u_\tau^N)$.
  More precisely, we derive an upper bound on $\mom(u_\tau^N)$;
  notice that entropy and second moment are connected via
  \begin{align}
    \label{eq:entbymom}
    - \ent_0(u_\tau^N) \le \pi\mom(u_\tau^N),
  \end{align}
  which follows from a scaling argument --- see e.g. \cite[Section 2.2]{GST} for details.
  To estimate the second moment, we apply once again the flow interchange technique.
  The flow $\flow_{(\cdot)}$ is given by exponential dilations, that is $\flow_\s\mu = (e^{-\s}\id)\#\mu$,
  or more explicitly in terms of the densities $u_\s$ of $\flow_\s\mu$:
  \begin{align*}
    u_\s (x) = e^{d\s} u(e^\s x).
  \end{align*}
  Since, by the chain rule,
  \begin{align*}
    \big[\nabla^2\log u_\s\big](x) = e^{2\s}\big[\nabla^2\log u\big](e^\s x),
  \end{align*}
  and since
  \begin{align*}
    \mom(u_\s) = \intrn|x|^2u_\s(x)\dd x = \intrn |e^{-\s}y|^2u(y)\dd y = e^{-2\s}\mom(u),
  \end{align*}
  it is easily seen that
  \[\nrg_\lambda(u_\s) = e^{4\s}\nrg_0(u) + \lambda^3e^{-2\s}\mom(u). \]
  Clearly, this scaling property carries over to the extension $\nrgm_\lambda$,
  and we obtain
  \begin{align*}
    \frac{\dn}{\dd\s}\bigg|_{\s=0}\nrgm_\lambda(\flow_\s\mu_\tau^n)
    = 4\nrgm_0(\mu_\tau^n) - 2\lambda^3\mom(\mu_\tau^n).
  \end{align*}
  Recall from Example \ref{xmp:flow} that $\flow_{(\cdot)}$ is a $1$-flow for the auxiliary functional $\frac12\mom$.
  The flow interchange estimate \eqref{eq:flowinterchange} now yields
  \begin{align*}
    -4\nrgm_0(\mu_\tau^n) + 2\lambda^3\mom(\mu_\tau^n)
    &= \limsup_{\s\to0}\frac{\nrgm_\lambda(\mu_\tau^n)-\nrgm_\lambda(\flow_\s\mu_\tau^n)}{\s} \\
    &\le \frac{\mom(\mu_\tau^{n-1})-\mom(\mu_\tau^n)}{2\tau} - \frac\tau2\left(\frac{\wassernoq{\mu_\tau^n}{\mu_\tau^{n-1}}}\tau\right)^2.
  \end{align*}
  Neglecting the last term, we obtain from here the recursion formula
  \begin{align*}
    \mom(\mu_\tau^n) \le \mom(\mu_\tau^{n-1}) + 8\tau\nrgm_0(\mu_\tau^n)-4\lambda^3\tau\mom(\mu_\tau^n),
  \end{align*}
  which clearly implies that
  \begin{align*}
    \mom(\mu_\tau^N) \le \mom(\mu_\tau^0) + 8\tau\sum_{n=1}^N\nrgm_0(\mu_\tau^n).
  \end{align*}
  On the right-hand side, we estimate $\nrgm_0(\mu_\tau^n)$
  by means of \eqref{eq:improtechnical} with $\eps:=\kappa/(16\pi)$,
  \begin{align}
    \label{eq:help001}
    \mom(\mu_\tau^N) \le \mom(\mu_\tau^0)
    + \frac{\kappa}{2\pi}\tau\sum_{n=1}^N\intrn\interleave \nabla^3\sqrt{u_\tau^n}\interleave^2\dd x + 8C_\eps N\tau.
  \end{align}
  By means of the inequality \eqref{eq:entbymom} between entropy and second moment,
  this provides an estimate from above on $-\ent_0(u_\tau^N)$ on the right-hand side of \eqref{eq:improstarts}.
  Rearranging terms, one finally obtains \eqref{eq:improends}, with $A:=8\pi C_\eps$.
  The estimate \eqref{eq:impromom} is now easily obtained
  by estimating the first sum in \eqref{eq:help001} above with \eqref{eq:improends},
  and neglecting the second sum.
\end{proof}
\begin{proof}[Proof of Lemma \ref{lem:improtechnical}]
  By standard interpolation of Sobolev norms, there exists a constant $A$ such that
  \begin{align*}
    \|\nabla^2f\|_{L^2}^2
    \le A\|\nabla^3f\|_{L^2}^{4/3}\|f\|_{L^2}^{2/3}
  \end{align*}
  for all $f\in W^{3,2}(\setR^d)$.
  Apply this to $f:=\sqrt u$.
  Since
  \begin{align*}
    \big\|\sqrt u\big\|_{L^2}^2 = \intrn u\dd x = 1,
  \end{align*}
  we obtain by means of \eqref{eq:nrgabove} that
  \begin{align*}
    \nrgm_0(\mu)\le C'A\big\|\nabla^3\sqrt u\big\|_{L^2}^{4/3},
  \end{align*}
  which is \eqref{eq:GNS}.
  The other estimate \eqref{eq:improtechnical} follows directly from \eqref{eq:GNS}
  via Young's inequality.
\end{proof}
\begin{corollary}
  There is a constant $C$, 
  expressible in terms of the initial entropy $\ent_0(u_0)$ and the initial second moment $\mom(u_0)$ alone,
  such that, for each $T>0$,
  \begin{align}
    \label{eq:zpriori}
    \int_0^T\intrn \big(\interleave\nabla^3 \sqrt{\bar u_\tau}\interleave^2
    +|\nabla \sqrt[6]{\bar u_\tau}|^6 \big)\dd x\dd t
    \le C(1+T)
  \end{align}
\end{corollary}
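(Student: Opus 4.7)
The plan is to reduce the continuous-in-time estimate to the summed discrete bound \eqref{eq:improends} of Lemma \ref{lem:impro}. By the very definition of the piecewise-constant interpolation in \eqref{eq:interpol}, the space-time integral over $[0,T]\times\setR^d$ of any nonnegative functional of $\bar u_\tau$ equals $\tau$ times the sum of the corresponding integrals evaluated at $u_\tau^n$, summed over the indices $n$ with $(n-1)\tau<T$. Thus, choosing $N:=\lceil T/\tau\rceil$ so that $[0,T]\subseteq[0,N\tau]$ and $N\tau\le T+\tau$, I would write
\begin{align*}
\int_0^T\intrn \big(\interleave\nabla^3\sqrt{\bar u_\tau}\interleave^2 + |\nabla\sqrt[6]{\bar u_\tau}|^6\big)\dd x\dd t
\le \tau\sum_{n=1}^N \intrn \big(\interleave\nabla^3\sqrt{u_\tau^n}\interleave^2 + |\nabla\sqrt[6]{u_\tau^n}|^6\big)\dd x.
\end{align*}

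Next, I would invoke \eqref{eq:improends} directly: dividing through by $\kappa/2$, the right-hand side above is bounded by
\begin{align*}
\frac{2}{\kappa}\big(\pi\mom(u_0)+\ent_0(u_0)\big) + \frac{2(A+2d\lambda^3)}{\kappa}\,N\tau.
\end{align*}
Using $N\tau\le T+\tau$, and observing that for the limit procedure only small time steps $\tau$ (say $\tau\le 1$) need to be considered, we have $N\tau \le T+1$, so the total bound assumes the asserted form $C(1+T)$ with
\begin{align*}
C := \max\left\{\tfrac{2}{\kappa}\big(\pi\mom(u_0)+\ent_0(u_0)\big),\;\tfrac{2(A+2d\lambda^3)}{\kappa}\right\} \cdot 2,
\end{align*}
which depends only on $\ent_0(u_0)$ and $\mom(u_0)$ once the dimensional parameters $d$, $\lambda$, $\kappa$, and the universal constant $A$ from Lemma \ref{lem:impro} are fixed.

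The argument is essentially a bookkeeping exercise; there is no genuine analytical obstacle here, since the difficult work -- justifying the entropy-dissipation inequality in the whole-space setting, controlling boundary terms from integration by parts via the cutoff argument in Lemma \ref{lem:Gauss}, passing to the limit $\eps\downarrow 0$ in the regularization $\nrg^\eps$, and converting the per-step bound into a summed bound -- has already been carried out in Proposition \ref{prp:apriori} and Lemma \ref{lem:impro}. The only mild point to watch is that the right-hand side of \eqref{eq:improends} grows linearly in $N\tau$ rather than in $N$, which is precisely what permits the constant $C$ to be chosen independently of the time step $\tau$.
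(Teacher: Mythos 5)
Your proof is correct and is exactly the route the paper takes: the paper's own proof of this corollary is the single line ``This is a direct consequence of \eqref{eq:improends},'' and you have simply written out the bookkeeping — converting the time integral of the piecewise-constant interpolation into $\tau$ times the discrete sum, applying \eqref{eq:improends}, and absorbing $N\tau\le T+\tau\le T+1$ (for $\tau\le1$) into a constant times $(1+T)$. No difference in approach, and no gaps.
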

\begin{proof}
  This is a direct consequence of \eqref{eq:improends}.
\end{proof}


\subsection{Uniform almost continuity of the discrete trajectory}
In this section we show that the piecewise constant interpolations $\bar\mu_\tau$
are ``uniformly almost continuous'' as curves in $\propo$.
More specifically, we derive an approximate H\"older estimate with a $\tau$-independent H\"older constant,
see Proposition \ref{prp:holder} below.
\begin{lemma}
  \label{lem:1ststep}
  There is a constant $A$, depending on $\mu_0=u_0\leb$ just in terms of $\mom(\mu_0)$,
  such that for every $\tau\in(0,1)$:
  \begin{align}
    \label{eq:firststep}
    \wassernoq{\mu_\tau^1}{\mu_0} \le A\tau^{1/6}.
  \end{align}
\end{lemma}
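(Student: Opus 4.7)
The starting point is the minimality of $\mu_\tau^1$ for $\nrgm_{\lambda,\tau}(\cdot\,;\mu_0)$: since $\nrgm_\lambda\ge 0$, for any competitor $\mu_*\in\Dom{\nrgm_\lambda}$ one has
\begin{align*}
  \wasser{\mu_\tau^1}{\mu_0}\le\wasser{\mu_*}{\mu_0}+2\tau\,\nrgm_\lambda(\mu_*).
\end{align*}
The naive choice $\mu_*=\mu_0$ fails, since $\nrgm_\lambda(\mu_0)$ is allowed to be infinite. My plan is to test instead against a heat-smoothed density $\mu_*=w_s\leb$ with $w_s=K_s\ast u_0$ as in \eqref{eq:heatbyconv}, for $s>0$ to be optimized. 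The coupling $\dn\pi(x,y):=K_s(x-y)u_0(y)\dn x\,\dn y$ has $w_s\leb$ and $\mu_0$ as its marginals and cost $2ds$, which immediately gives $\wasser{w_s\leb}{\mu_0}\le 2ds$ and $\mom(w_s)\le\mom(\mu_0)+2ds$.

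The crucial step is to bound $\nrg_0(w_s\leb)$ by a quantity depending only on $s$ and $d$. For this I would combine two classical facts about heat-smoothed measures. The first is Stam's inequality for Fisher information, which furnishes the $u_0$-independent bound
\begin{align*}
\fish_0(w_s)\le\fish_0(K_s)=\frac{d}{4s}.
\end{align*}
The second is the De Bruijn--type identity
\begin{align*}
\frac{\dn}{\dd s}\fish_0(w_s)=-2\nrg_0(w_s),
\end{align*}
which holds rigorously for $s>0$ because $w_s$ is smooth and positive; the integrations by parts on $\setR^d$ it requires are of the same nature as those appearing in the proof of Lemma \ref{lem:cpriori} and can be justified via Lemma \ref{lem:Gauss}.

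Combining the two, an integration on $[r,2r]$ yields
\begin{align*}
\int_r^{2r}\nrg_0(w_s)\dd s=\tfrac12\bigl[\fish_0(w_r)-\fish_0(w_{2r})\bigr]\le\frac{d}{8r},
\end{align*}
so by the mean value theorem there exists $s^*\in[r,2r]$ with $\nrg_0(w_{s^*})\le d/(8r^2)$. Since $w_{s^*}$ is smooth and positive, $\nrgm_0(w_{s^*}\leb)=\nrg_0(w_{s^*})$, and feeding $\mu_*=w_{s^*}\leb$ into the minimality bound produces
\begin{align*}
\wasser{\mu_\tau^1}{\mu_0}\le 4dr+\frac{\tau d}{4r^2}+2\tau\lambda^3\bigl(\mom(\mu_0)+4dr\bigr).
\end{align*}
The choice $r:=\tau^{1/3}$ balances the first two terms, and since $\tau\in(0,1)$ implies $\tau,\tau^{4/3}\le\tau^{1/3}$, the right-hand side is dominated by $A^2\tau^{1/3}$ with a constant $A$ depending only on $d$, $\lambda$ and $\mom(\mu_0)$; the claim \eqref{eq:firststep} follows upon taking square roots.

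The main technical point I expect to require care is the honest justification of the De Bruijn--type identity on the whole of $\setR^d$: the relevant factor $\nabla\log w_s$ decays only polynomially at infinity, so boundary contributions in the integration by parts must be discarded via exactly the machinery developed in the proof of Lemma \ref{lem:cpriori}.
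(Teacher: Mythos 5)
Your proposal is correct and does follow the paper's overall skeleton — test the minimality of $\mu_\tau^1$ against the heat-smoothed competitor $w_s\leb$, bound the transport cost by $2ds$ via the explicit coupling $K_s(x-y)u_0(y)$, bound the energy of $w_s$ by a negative power of $s$, and balance at $s\sim\tau^{1/3}$. The genuine divergence is in how the bound on $\nrg_0(w_s)$ is obtained. The paper bounds $\nrg_0(\heat_\sigma\ast u_0)$ \emph{directly} for every $\sigma$, by applying Jensen's inequality to $\|\nabla^2(\heat_\sigma\ast u_0)\|^2/(\heat_\sigma\ast u_0)$ and $|\nabla(\heat_\sigma\ast u_0)|^4/(\heat_\sigma\ast u_0)^3$ separately, computing the resulting Gaussian integrals and arriving at the pointwise-in-$\sigma$ estimate $\nrg_0(\heat_\sigma\ast u_0)\le d(2d+3)/(2\sigma)^2$. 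You instead apply Jensen one rung lower on the functional ladder (Stam's inequality is precisely Jensen for $|\nabla\log(\cdot)|^2$), obtain a bound on $\fish_0(w_s)$, and transfer it to $\nrg_0$ via the De Bruijn-type dissipation identity $\frac{\dn}{\dd s}\fish_0(w_s)=-2\nrg_0(w_s)$, selecting a good time $s^*\in[r,2r]$ by a mean-value argument. This is consistent with and in fact exploits the $\ent_0\to\fish_0\to\nrg_0$ hierarchy that drives the rest of the paper, which is conceptually appealing; the trade-off is that your approach gives an estimate only for \emph{some} $s^*$ in the interval (entirely sufficient here) rather than for all $\sigma$, and that it requires the additional rigorous verification of the De Bruijn identity on $\setR^d$ — which, as you correctly note, is the same kind of integration-by-parts justification already carried out in Lemma \ref{lem:cpriori}, so this is not a gap. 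Both routes yield the same $\sigma^{-2}$ scaling and hence the same $\tau^{1/6}$ Hölder exponent.

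Two minor technical remarks worth tidying in a final write-up: (1) Stam's inequality in the form $1/\fish_0(f\ast g)\ge 1/\fish_0(f)+1/\fish_0(g)$ does not need $\fish_0(u_0)<\infty$ — the conclusion $\fish_0(w_s)\le\fish_0(K_s)$ degenerates gracefully when $\fish_0(u_0)=\infty$ — but it is simpler and self-contained to quote the monotonicity $\fish_0(f\ast g)\le\fish_0(g)$ directly, which is itself a one-line Jensen argument. (2) The mean-value selection of $s^*$ does not require continuity of $s\mapsto\nrg_0(w_s)$; it suffices that this nonnegative map is measurable, since then its infimum over $[r,2r]$ is at most the average. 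A cheap fix, if you want an exact $s^*$ rather than an approximate one, is to note that $s\mapsto\fish_0(w_s)$ is $C^1$ on $(0,\infty)$ (by the smoothing properties of the heat semigroup), so $\nrg_0(w_s)=-\frac12\frac{\dn}{\dd s}\fish_0(w_s)$ is in fact continuous.
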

\begin{proof}
  The idea is to show that, for a constant $A'$ expressible just in terms of $\mom(\mu_0)$,
  \begin{align}
    \label{eq:1ststep1}
    \frac1{2\tau}\wasser{\heat_\sigma\ast\mu_0}{\mu_0} + \nrg_\lambda(\heat_\sigma \ast u_0) \le A'\tau^{-2/3}.
  \end{align}
  It then follows thanks to non-negativity of $\nrgm_\lambda$
  that the minimizer $\mu_\tau^1$ of $\nrgm_{\lambda,\tau} (\cdot;\mu_0)$ satisfies
  \begin{align*}
    \frac1{2\tau}\wasser{\mu_\tau^1}{\mu_0}
    \le \nrgm_{\lambda,\tau}(\mu_\tau^1;\mu_0)
    \le \nrgm_{\lambda,\tau}(\heat_\sigma\ast\mu_\tau^0;\mu_0)
    \le A'\tau^{-2/3},
  \end{align*}
  which immediately implies \eqref{eq:firststep}.
  To prove \eqref{eq:1ststep1}, we show that, on the one hand,
  \begin{align}
    \label{eq:1ststep2}
    \wasser{\heat_\sigma\ast\mu_0}{\mu_0} \le 2d\sigma,
  \end{align}
  and that, on the other hand,
  \begin{align}
    \label{eq:1ststep3}
    \nrg_\lambda(\heat_\sigma\ast u_0) \le B\sigma^{-2},
  \end{align}
  for all $\sigma\in(0,1)$, with a constant $B$ that again depends on $\mu_0$ only via $\mom(\mu_0)$.
  The choice $\sigma:=\tau^{1/3}$ then yields \eqref{eq:1ststep1}.
  For the proof of \eqref{eq:1ststep2}, we compare the Wasserstein distance
  with the transport cost generated by the plan $\pi$ that has Lebesgue density
  \begin{align*}
    g(x,y) = u_0(x)\heat_\sigma(y-x).
  \end{align*}
  It is easily seen that the two marginals are indeed $u_0$ and $\heat_\sigma\ast u_0$, respectively.
  According to \eqref{eq:Wassersteindist1}, we have
  \begin{align*}
    \wasser{\heat_\sigma\ast\mu_0}{\mu_0}
    \le \int_{\setR^d\times\setR^d}|x-y|^2 g(x,y)\dd(x,y)
    = \intrn u_0(x)\dd x \intrn |z|^2\heat_\sigma(z)\dd z
    = 2d\sigma.
  \end{align*}
  The proof of \eqref{eq:1ststep3} is a bit more elaborate.
  First, note that
  \begin{align*}
    \heat_\sigma\ast\big(|\cdot|^2\big)(y)
    &= \intrn |y-x|^2\heat_\sigma(x)\dd x \\
    & = |y|^2\intrn\heat_\sigma(x)\dd x - 2y\cdot\intrn x\heat_\sigma(x)\dd x + \intrn|x|^2\heat_\sigma(x)\dd x \\
    & = |y|^2 - 2y\cdot 0 + 2d\sigma,
  \end{align*}
  and hence
  \begin{align*}
    \mom(\heat_\sigma\ast u_0)
    = \intrn |x|^2 \dd\big(\heat_\sigma\ast u_0\big)(x)
    &= \intrn \heat_\sigma\ast\big(|\cdot|^2\big)(y)\dd \mu_0(y) \\
    &= 2d\sigma \intrn\dd\mu_0(y) + \intrn|y|^2\dd\mu_0(y)
    = 2d\sigma + \mom(\mu_0).
  \end{align*}
  Concerning the estimate of $\nrg_0$, observe that for a smooth and positive probability density $f$,
  \begin{align*}
    f\big\|\nabla^2\log f\big\|^2
    = f\left\|\frac{\nabla^2f}{f} - \frac{\nabla f\otimes\nabla f}{f^2}\right\|^2
    \le 2\frac{\|\nabla^2f\|^2}{f} + 2\frac{|\nabla f|^4}{f^3}.
  \end{align*}
  Now we plug $f:=\heat_\sigma\ast u_0$ in.
  By Jensen's inequality,
  \begin{align*}
    \big\|\nabla^2(\heat_\sigma\ast u_0)\big\|^2(x)
    &= \big(\heat_\sigma\ast u_0\big)^2(x)
    \left\|\intrn\frac{\nabla^2\heat_\sigma(y)}{\heat_\sigma(y)}\,\frac{\heat_\sigma(y)u_0(x-y)\dd y}{\big(\heat_\sigma\ast u_0\big)(x)}\right\|^2 \\
    & \le \big(\heat_\sigma\ast u_0\big)^2(x)
    \intrn \left\|\frac{\nabla^2\heat_\sigma(y)}{\heat_\sigma(y)}\right\|^2 \,\frac{\heat_\sigma(y)u_0(x-y)\dd y}{\big(\heat_\sigma\ast u_0\big)(x)} \\
    &= \big(\heat_\sigma\ast u_0\big)(x)\,
      \left(\frac{\|\nabla^2\heat_\sigma\|^2}{\heat_\sigma}\ast u_0\right) (x).
  \end{align*}
  It thus follows that   
  \begin{align*}
    \intrn \frac{\|\nabla^2(\heat_\sigma\ast u_0)\|^2}{\heat_\sigma\ast u_0}\dd x
    \le \intrn \left(\frac{\|\nabla^2\heat_\sigma\|^2}{\heat_\sigma}\ast u_0\right)\dd x
    = \intrn \frac{\|\nabla^2\heat_\sigma\|^2}{\heat_\sigma} \dd x
    = \left(\frac1{2\sigma}\right)^2d(d+1).
  \end{align*}
  In an analogous manner,
  \begin{align*}
    \big|\nabla(\heat_\sigma\ast u_0)\big|^4(x)
    &=  \big(\heat_\sigma\ast u_0\big)^4(x)\,
      \left|\intrn \frac{\nabla\heat_\sigma(y)}{\heat_\sigma(y)}\,\frac{\heat_\sigma(y)u_0(x-y)\dd y}{\big(\heat_\sigma\ast u_0\big)(x)}\right|^4 \\
    &\le \big(\heat_\sigma\ast u_0\big)^4(x)\,
      \intrn\left|\frac{\nabla\heat_\sigma(y)}{\heat_\sigma(y)}\right|^4 \,\frac{\heat_\sigma(y)u_0(x-y)\dd y}{\big(\heat_\sigma\ast u_0\big)(x)} \\
    &= \big(\heat_\sigma\ast u_0\big)^3(x)\,
      \left(\frac{|\nabla\heat_\sigma|^4}{\heat_\sigma^3}\ast u_0\right)(x),
  \end{align*}
  implying that   
  \begin{align*}
    \intrn \frac{|\nabla(\heat_\sigma\ast u_0)|^4}{(\heat_\sigma\ast u_0)^3}\dd x
    \le \intrn \left(\frac{|\nabla\heat_\sigma|^4}{\heat_\sigma^3}\ast u_0\right)\dd x
    = \intrn \frac{|\nabla\heat_\sigma|^4}{\heat_\sigma^3} \dd x
    = \left(\frac1{2\sigma}\right)^2d(d+2).
  \end{align*}
  Collecting these estimates, we finally obtain
  \begin{align*}
    \nrg_\lambda(\heat_\sigma\ast u_0)
    & = \nrg_0(\heat_\sigma\ast u_0) + \lambda^3\mom(\heat_\sigma\ast u_0) \\
    & \le \intrn\frac{\|\nabla^2(\heat_\sigma\ast u_0)\|^2}{\heat_\sigma\ast u_0}\dd x
      + \intrn\frac{|\nabla(\heat_\sigma\ast u_0)|^4}{(\heat_\sigma\ast u_0)^3}\dd x
      +\lambda^3\mom(\heat_\sigma\ast u_0) \\
    & \le \left(\frac1{2\sigma}\right)^2d(2d+3) + \lambda^3\big[2d\sigma + \mom(u_0)\big].
  \end{align*}
  From here, \eqref{eq:1ststep3} follows immediately.
\end{proof}
\begin{lemma}
  \label{lem:holder1}
  There is a constant $\lip$ such that, for all $\overline M>\underline M\ge 1$ with $\overline M\tau\le1$,
  \begin{align}
    \label{eq:holder1}
    \wassernoq{u_\tau^{\overline M}}{u_\tau^{\underline M}} \le \lip \big(\overline M\tau-\underline M\tau\big)^{1/12}.
  \end{align}
\end{lemma}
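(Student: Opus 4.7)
The plan is to combine the Cauchy--Schwarz-type bound \eqref{eq:ce-holder} with the universal energy estimate of Proposition \ref{prp:universal}, and then close the argument by strong induction on $k := \overline M - \underline M$ through a dyadic splitting of $[\underline M,\overline M]$. The basic ingredient is the following: for any $1 \le N \le M$ with $M\tau \le 1$, \eqref{eq:ce-holder} applied at the base index $N$, combined with $\nrgm_\lambda(\mu_\tau^N) \le 2\ebnd(N\tau)^{-2/3}$ (which follows from \eqref{eq:universal} because $N\tau \le 1$ forces $(N\tau)^{-2/3} \ge 1$), yields
\begin{equation*}
\wassernoq{u_\tau^M}{u_\tau^N}^2 \le 4\ebnd (M-N)\tau\,(N\tau)^{-2/3}.
\end{equation*}
Used directly with $N = \underline M$ this gives the claim when $\underline M$ is comparable to $\overline M$, but is too weak when $\underline M$ is small compared to $k$, which is what motivates the splitting.

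For the induction the base case $k = 1$ is immediate: since $\underline M \ge 1$ and $\tau \le 1$, the displayed estimate yields $\wassernoq{u_\tau^{\underline M + 1}}{u_\tau^{\underline M}} \le 2\sqrt{\ebnd}\,\tau^{1/6} \le 2\sqrt{\ebnd}\,\tau^{1/12}$. For $k \ge 2$, I set $K := \underline M + \lfloor k/2\rfloor$, observe $K \ge (k+1)/2 > k/2$, and split
\begin{equation*}
\wassernoq{u_\tau^{\overline M}}{u_\tau^{\underline M}} \le \wassernoq{u_\tau^{\overline M}}{u_\tau^K} + \wassernoq{u_\tau^K}{u_\tau^{\underline M}}.
\end{equation*}
The first piece is controlled by the displayed estimate with $N = K$: using $(K\tau)^{-2/3} \le 2^{2/3}(k\tau)^{-2/3}$, $(\overline M - K)\tau \le k\tau$, and $(k\tau)^{1/3} \le (k\tau)^{1/6}$ (since $k\tau \le 1$), one obtains $\wassernoq{u_\tau^{\overline M}}{u_\tau^K} \le 2^{4/3}\sqrt{\ebnd}\,(k\tau)^{1/12}$. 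The second piece has length $K - \underline M = \lfloor k/2\rfloor < k$, so the inductive hypothesis yields $\wassernoq{u_\tau^K}{u_\tau^{\underline M}} \le \lip\,(\lfloor k/2\rfloor\tau)^{1/12} \le 2^{-1/12}\lip\,(k\tau)^{1/12}$.

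Adding the two contributions, $\wassernoq{u_\tau^{\overline M}}{u_\tau^{\underline M}} \le \bigl(2^{4/3}\sqrt{\ebnd} + 2^{-1/12}\lip\bigr)(k\tau)^{1/12}$, and the induction closes for any $\lip \ge \max\bigl\{2\sqrt{\ebnd},\,2^{4/3}\sqrt{\ebnd}/(1 - 2^{-1/12})\bigr\}$, a finite constant depending only on $\ebnd$ and hence ultimately on the dimension and on the initial data through $\ent_0(u_0)$ and $\mom(u_0)$. The only delicate point is to keep all constants uniform in $\tau$, $\underline M$ and $\overline M$ as the induction unfolds; this is automatic since every estimate above relies solely on the fixed $\ebnd$ from Proposition \ref{prp:universal} and on absolute numerical factors coming from $K \ge k/2$.
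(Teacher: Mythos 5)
Your proof is correct, and it takes a genuinely different route from the paper's. The paper proves a single weighted summation estimate, namely
\[
\tau\sum_{n=1}^{N_\tau}(n\tau)^{5/6}\left(\frac{\wassernoq{u_\tau^{n+1}}{u_\tau^n}}{\tau}\right)^2 \le 24\ebnd,
\]
by reorganizing the sum via Lemma \ref{lem:elementary1} together with \eqref{eq:ce-sum} and Proposition \ref{prp:universal}, and then closes with a single Cauchy--Schwarz against the weight $\tau\sum_{n=\underline M}^{\overline M-1}(n\tau)^{-5/6}$, which is dominated by $12\big[(\overline M\tau)^{1/6}-(\underline M\tau)^{1/6}\big]$ thanks to Lemma \ref{lem:elementary2}; the conclusion follows from $(a+b)^{1/6}\le a^{1/6}+b^{1/6}$, with $\lip=12\sqrt{2\ebnd}$. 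You instead combine the local H\"older estimate \eqref{eq:ce-holder} --- itself a Cauchy--Schwarz consequence of \eqref{eq:ce-sum} --- with the universal bound at a single scale, and iterate across scales by a dyadic-splitting strong induction on $\overline M-\underline M$. The mechanism that makes the exponent $1/12$ appear is the same in both proofs: the energy estimate $(N\tau)^{-2/3}$ is singular as $N\tau\downarrow 0$ and has to be averaged against the step distances; you absorb the singularity by always splitting at the midpoint $K$, so that $K\tau$ is comparable to the interval length $k\tau$ and $(K\tau)^{-2/3}$ costs only a dimensional factor $2^{2/3}$, whereas the paper builds the singularity into the summation weight up front. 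The paper's one-shot argument gives a cleaner explicit constant; yours is more self-contained (no appeal to the appendix lemmas on sums of powers) but requires choosing $\lip$ above the recursion-determined threshold $2^{4/3}\sqrt{\ebnd}/(1-2^{-1/12})$ for the induction to close. Both yield the same exponent $1/12$ and a constant depending only on $\ebnd$.
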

\begin{proof}
  Let $N_\tau$ be the largest integer $N$ with $N\tau\le1$.
  Using Lemma \ref{lem:elementary1} from the appendix and Proposition \ref{prp:universal},
  we find that
  \begin{align*}
    \tau\sum_{n=1}^{N_\tau} (n\tau)^{5/6}\left(\frac{\wassernoq{u_\tau^{n+1}}{u_\tau^n}}\tau\right)^2
    & \le \tau\sum_{n=1}^{N_\tau}\left[\tau\sum_{N=1}^n(\tau N)^{-1/6} \left(\frac{\wassernoq{u_\tau^{n+1}}{u_\tau^n}}\tau\right)^2\right] \\
    &\le \sum_{N=1}^{N_\tau}\left[(\tau N)^{-1/6}\ \tau\sum_{n=N}^\infty \left(\frac{\wassernoq{u_\tau^{n+1}}{u_\tau^n}}\tau\right)^2\right] \\
    &\le 2\sum_{N=1}^{N_\tau} (\tau N)^{-1/6}\nrgm_\lambda(u_\tau^N) \\
    &\le 4\ebnd\sum_{N=1}^{N_\tau} (\tau N)^{-5/6}
    \le 4\ebnd \int_0^1s^{-5/6}\dd s = 24\ebnd.
  \end{align*}
  Now we combine this with the triangle inequality for $\wass$,
  the basic energy estimate \eqref{eq:ce-sum}, and
  Lemma \ref{lem:elementary2} from the appendix:
  \begin{align*}
    \wassernoq{u_\tau^{\overline M}}{u_\tau^{\underline N}}
    &\le \sum_{n=\underline M}^{\overline M-1}\wassernoq{u_\tau^{n+1}}{u_\tau^n} \\
    &\le \left(\tau \sum_{n=1}^{N_\tau}(n\tau)^{5/6}\left(\frac{\wassernoq{u_\tau^{n+1}}{u_\tau^n}}\tau\right)^2 \right)^{1/2} 
    \left(\tau\sum_{n=\underline M}^{\overline M-1}(n\tau)^{-5/6}\right)^{1/2} \\
    &\le (24\ebnd)^{1/2} \left( 12\left[\big(\overline M \tau\big)^{1/6}-\big(\underline M\tau\big)^{1/6}\right] \right)^{1/2}.
  \end{align*}
  To conclude \eqref{eq:holder1} from here, with $\lip=12\sqrt{2\ebnd}$,
  it suffices to recall that $(a+b)^{1/6}\le a^{1/6}+b^{1/6}$ for arbitrary non-negative reals $a$ and $b$. 
\end{proof}
\begin{lemma}
  \label{lem:holder2}
  For all $\overline M\ge\underline M$ with $\underline M\tau\ge1$,
  \begin{align}
    \label{eq:holder2}
    \wassernoq{\mu_\tau^{\overline M}}{\mu_\tau^{\underline M}} \le 2\sqrt{\ebnd}\big(\overline M\tau-\underline M\tau\big)^{1/2}.
  \end{align}
\end{lemma}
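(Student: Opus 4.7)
My plan is to derive \eqref{eq:holder2} by combining the triangle inequality for $\wassernoq{\cdot}{\cdot}$ with the Cauchy--Schwarz inequality applied to the sum of one-step distances, and to control that sum via the basic estimate \eqref{eq:ce-sum} together with the universal bound from Proposition \ref{prp:universal}. The key observation is that the universal bound $\nrgm_\lambda(\mu_\tau^N)\le\ebnd(1+(N\tau)^{-2/3})$ simplifies dramatically once we are past time $1$: the assumption $\underline M\tau\ge1$ gives $\nrgm_\lambda(\mu_\tau^{\underline M})\le 2\ebnd$, a bound that no longer degenerates as $\tau\downarrow0$. This is exactly what is missing from the earlier estimate in Lemma \ref{lem:holder1}, which had to absorb the singular factor $(N\tau)^{-2/3}$ near the initial time via the weighted summation trick from Lemma \ref{lem:elementary1}; here, on the interval $[\underline M\tau,\overline M\tau]\subset[1,\infty)$, that step is unnecessary.

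Concretely, I would first apply the triangle inequality and Cauchy--Schwarz as follows:
\begin{align*}
  \wassernoq{\mu_\tau^{\overline M}}{\mu_\tau^{\underline M}}
  &\le \sum_{n=\underline M}^{\overline M-1}\wassernoq{\mu_\tau^{n+1}}{\mu_\tau^n}
  = \sum_{n=\underline M}^{\overline M-1}\tau\cdot\frac{\wassernoq{\mu_\tau^{n+1}}{\mu_\tau^n}}{\tau} \\
  &\le \bigl((\overline M-\underline M)\tau\bigr)^{1/2}\left(\tau\sum_{n=\underline M}^{\overline M-1}\left(\frac{\wassernoq{\mu_\tau^{n+1}}{\mu_\tau^n}}{\tau}\right)^2\right)^{1/2}.
\end{align*}
Next, I would bound the inner sum using \eqref{eq:ce-sum} at $N=\underline M$, which yields
\begin{align*}
  \frac{\tau}{2}\sum_{n=\underline M}^{\overline M-1}\left(\frac{\wassernoq{\mu_\tau^{n+1}}{\mu_\tau^n}}{\tau}\right)^2
  \le \frac{\tau}{2}\sum_{n=\underline M}^{\infty}\left(\frac{\wassernoq{\mu_\tau^{n+1}}{\mu_\tau^n}}{\tau}\right)^2
  \le \nrgm_\lambda(\mu_\tau^{\underline M}).
\end{align*}

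Finally, I would invoke Proposition \ref{prp:universal} together with the assumption $\underline M\tau\ge1$ to conclude $\nrgm_\lambda(\mu_\tau^{\underline M})\le\ebnd(1+1)=2\ebnd$. Substituting into the previous display gives $\tau\sum_{n=\underline M}^{\overline M-1}\bigl(\wassernoq{\mu_\tau^{n+1}}{\mu_\tau^n}/\tau\bigr)^2\le 4\ebnd$, and plugging into the Cauchy--Schwarz estimate produces \eqref{eq:holder2} with the asserted constant $2\sqrt{\ebnd}$. There is no real obstacle here: everything reduces to a routine telescoping/Cauchy--Schwarz argument once one realizes that Proposition \ref{prp:universal} decouples from the time-step $\tau$ in the regime $\underline M\tau\ge 1$. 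The role of the present lemma, combined with Lemma \ref{lem:holder1}, is precisely to produce an approximate H\"older estimate that is uniform in $\tau$ on all of $[0,\infty)$, with the exponent $1/2$ in the large-time regime reflecting the expected regularity of the limiting Wasserstein curve, while the weaker exponent $1/12$ near $t=0$ compensates for the singularity of $\nrgm_\lambda(\mu_\tau^n)$ at small $n\tau$.
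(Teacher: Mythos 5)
Your proof is correct and follows essentially the same route as the paper: triangle inequality plus Cauchy--Schwarz on the one-step distances, the sum estimate \eqref{eq:ce-sum} at $N=\underline M$, and the observation that Proposition \ref{prp:universal} gives $\nrgm_\lambda(\mu_\tau^{\underline M})\le 2\ebnd$ once $\underline M\tau\ge1$. The constants also match the stated $2\sqrt{\ebnd}$.
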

\begin{proof}
  From the basic estimate \eqref{eq:ce-sum},
  we obtain that
  \begin{align*}
     \wassernoq{u_\tau^{\overline M}}{u_\tau^{\underline N}}
    &\le \sum_{n=\underline M}^{\overline M-1}\wassernoq{u_\tau^{n+1}}{u_\tau^n} \\
    &\le \left(\tau \sum_{n=\underline M}^{N_\tau}\left(\frac{\wassernoq{u_\tau^{n+1}}{u_\tau^n}}\tau\right)^2 \right)^{1/2} 
      \left(\tau\sum_{n=\underline M}^{\overline M-1}1\right)^{1/2} \\
    &\le \big(2\nrgm_\lambda(\mu_\tau^{\underline M})\big)^{1/2}\big(\overline M\tau-\underline M\tau\big)^{1/2}
  \end{align*}
  To conclude \eqref{eq:holder2}, observe that $\nrgm_\lambda(\mu_\tau^{\underline M}) \le 2\ebnd$.
  thanks to Proposition \eqref{prp:universal} and since $\underline M\tau\ge1$.
\end{proof}
Recall that a \emph{modulus of continuity} is a map $\omega:\setR_{\ge0}\times\setR_{\ge0}\to\setR_{\ge0}$
with the property that $\lim_{(s,t)\to(r,r)}\omega(s,t)=0$ for arbitrary $r\in\setR_{\ge0}$.
\begin{proposition}
  \label{prp:holder}
  With the constants $A$ from Lemma \ref{lem:1ststep},
  $\lip$ from Lemma \ref{lem:holder1},
  and $\ebnd$ from Proposition \ref{prp:universal},
  one has, for all $\tau\in(0,1)$,
  \begin{align}
    \label{eq:holderall}
    \wassernoq{\bar\mu_\tau(t)}{\bar\mu_\tau(s)}
    \le \left(A+\lip+2\sqrt{\ebnd}\right)\big(\tau^{1/12} + |t-s|^{1/12} + |t-s|^{1/2}\big) \quad \text{for all $s,t\ge0$}.
  \end{align}
\end{proposition}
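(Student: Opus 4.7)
The plan is to reduce the proposition to an estimate for $\wassernoq{\mu_\tau^{\overline M}}{\mu_\tau^{\underline M}}$ and then combine the three preceding lemmas across the distinct time regimes in which they apply. Assuming WLOG $s\le t$, I will set $\underline M = \lceil s/\tau\rceil$ and $\overline M = \lceil t/\tau\rceil$ (with the convention $\lceil 0\rceil=0$), so that $\bar\mu_\tau(s)=\mu_\tau^{\underline M}$ and $\bar\mu_\tau(t)=\mu_\tau^{\overline M}$. A direct count gives $(\overline M-\underline M)\tau\le|t-s|+\tau$, hence any bound in terms of $(\overline M-\underline M)\tau$ converts to the form of \eqref{eq:holderall} by subadditivity $(a+b)^{\alpha}\le a^\alpha+b^\alpha$ for $\alpha\in(0,1]$, together with the trivial inequalities $\tau^{1/6},\tau^{1/2}\le\tau^{1/12}$ valid for $\tau\in(0,1)$.

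The argument then splits according to where the transition time $1$ between Lemmas \ref{lem:holder1} and \ref{lem:holder2} sits relative to the interval $[\underline M\tau,\overline M\tau]$. If $\overline M\tau\le 1$ and $\underline M\ge 1$, Lemma \ref{lem:holder1} directly gives $\lip(|t-s|^{1/12}+\tau^{1/12})$; in the boundary subcase $\underline M=0$ I will insert $\mu_\tau^1$ via the triangle inequality and bound the initial jump $\wassernoq{\mu_\tau^1}{\mu_0}\le A\tau^{1/6}\le A\tau^{1/12}$ through Lemma \ref{lem:1ststep}. If instead $\underline M\tau\ge 1$, Lemma \ref{lem:holder2} yields $2\sqrt{\ebnd}(|t-s|^{1/2}+\tau^{1/2})$, and the $\tau^{1/2}$ piece is absorbed into $\tau^{1/12}$.

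The mixed case $\underline M\tau<1\le\overline M\tau$ is the main obstacle, since the two lemmas fail to overlap unless $1/\tau$ happens to be an integer. Setting $M^\ast=\lceil 1/\tau\rceil$, I will triangulate
\[
\wassernoq{\mu_\tau^{\overline M}}{\mu_\tau^{\underline M}}
\le \wassernoq{\mu_\tau^{\overline M}}{\mu_\tau^{M^\ast}}
+\wassernoq{\mu_\tau^{M^\ast}}{\mu_\tau^{M^\ast-1}}
+\wassernoq{\mu_\tau^{M^\ast-1}}{\mu_\tau^{\underline M}},
\]
handle the outer two terms by Lemmas \ref{lem:holder2} and \ref{lem:holder1} (using that $M^\ast\tau\ge 1$ while $(M^\ast-1)\tau<1$, and invoking Lemma \ref{lem:1ststep} if $\underline M=0$), and control the bridging middle term by isolating the $n=M^\ast-1$ summand in \eqref{eq:ce-sum}, obtaining $\wassernoq{\mu_\tau^{M^\ast}}{\mu_\tau^{M^\ast-1}}\le\sqrt{2\tau\,\nrgm_\lambda(\mu_\tau^{M^\ast-1})}$. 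Since $\tau\in(0,1)$ forces $M^\ast\ge 2$, one checks $(M^\ast-1)\tau\ge\max(1-\tau,\tau)\ge 1/2$, so the universal estimate \eqref{eq:universal} controls $\nrgm_\lambda(\mu_\tau^{M^\ast-1})$ uniformly, and the bridging contribution is of order $\sqrt{\ebnd}\,\tau^{1/2}\le\sqrt{\ebnd}\,\tau^{1/12}$, which is absorbed into the constant multiplying $\tau^{1/12}$. Summing all contributions across the three regimes then yields \eqref{eq:holderall}.
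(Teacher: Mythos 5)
Your proof follows the same overall strategy as the paper's: decompose $\setR_{\ge0}$ into the regimes covered by Lemmas \ref{lem:holder1} and \ref{lem:holder2}, triangulate across the transition, and use Lemma \ref{lem:1ststep} for the initial jump when $s=0$. What you do differently --- and in fact more carefully --- is to notice that the two lemmas leave a one-step gap at the transition: Lemma \ref{lem:holder1} requires $\overline M\tau\le1$ and Lemma \ref{lem:holder2} requires $\underline M\tau\ge1$, so unless $1/\tau$ is an integer there is no single index $N_\tau$ satisfying both hypotheses simultaneously, and the paper's choice of $N_\tau$ (smallest integer with $N_\tau\tau\ge1$) violates the hypothesis of Lemma \ref{lem:holder1} when applied to the pair $(N_\tau,\underline M)$. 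Your bridging term $\wassernoq{\mu_\tau^{M^\ast}}{\mu_\tau^{M^\ast-1}}$, controlled by isolating the single summand $n=M^\ast-1$ in \eqref{eq:ce-sum} together with the universal bound \eqref{eq:universal} at $(M^\ast-1)\tau\ge\max(\tau,1-\tau)\ge\tfrac12$, is the right fix. The only shortfall is that this extra bridging contribution, of order $\sqrt{2(1+2^{2/3})\ebnd}\,\tau^{1/2}\le\sqrt{2(1+2^{2/3})\ebnd}\,\tau^{1/12}$, enlarges the coefficient beyond the stated $A+\lip+2\sqrt{\ebnd}$; this is a harmless discrepancy --- the explicit constant is never used downstream, only the modulus of continuity enters the Arzel\`a--Ascoli argument --- and is implicitly present in the paper's own proof once the boundary gap is accounted for. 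If you want to recover the stated constant verbatim, a cleaner alternative is to weaken the hypothesis of Lemma \ref{lem:holder2} to $\underline M\tau\ge\tfrac12$ (at the price of a slightly larger constant there), which lets the two lemmas genuinely overlap and removes the need for a separate bridging term.
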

\begin{proof}
  Without loss of generality, assume that $t>s$.

  Given $\tau$, let $\underline M$ and $\overline M$ be the smallest integers
  with $\underline M\tau\ge s$ and $\overline M\tau\ge t$, respectively.
  By definition of $\bar\mu_\tau$, we have that
  \begin{align*}
    \wassernoq{\bar\mu_\tau(t)}{\bar\mu_\tau(s)}
    = \wassernoq{\mu_\tau^{\overline M}}{\mu_\tau^{\underline M}}.
  \end{align*}
  Further note that $\overline M\tau-\underline M\tau \le t-s+\tau$.  
  Let $N_\tau$ be the smallest integer $N$ with $N\tau\ge1$.
  If either $\underline M\ge N_\tau$ or $\overline M\le N_\tau$,
  then \eqref{eq:holderall} follows directly from \eqref{eq:holder1} or \eqref{eq:holder2}, respectively,
  using that
  \begin{align*}
    \big(\overline M\tau-\underline M\tau\big)^{1/12} &\le (t-s+\tau)^{1/12} \le (t-s)^{1/12} + \tau^{1/12} , \\
    \big(\overline M\tau-\underline M\tau\big)^{1/2} &\le (t-s+\tau)^{1/2} \le (t-s)^{1/2} + \tau^{1/2} \le (t-s)^{1/2} + \tau^{1/12}.
  \end{align*}
  If instead $\underline M<N_\tau<\overline M$, then we estimate further:
  \begin{align*}
    \wassernoq{\mu_\tau^{\overline M}}{\mu_\tau^{\underline M}}
    \le \wassernoq{\mu_\tau^{\overline M}}{\mu_\tau^{N_\tau}} + \wassernoq{\mu_\tau^{N_\tau}}{\mu_\tau^{\underline N}},
  \end{align*}
  and apply \eqref{eq:holder1} and \eqref{eq:holder2}, respectively, to the sum on the right-hand side,
  using that, trivially,
  \begin{align*}
    \overline M\tau-N_\tau \le t-s+\tau,\quad
    N_\tau-\underline M\tau \le t-s.
  \end{align*}
  Finally, if $\underline M_\tau$, i.e., $s=0$,
  then we estimate
  \begin{align*}
    \wassernoq{\mu_\tau^{\overline M}}{\mu_0}
    \le \wassernoq{\mu_\tau^{\overline M}}{\mu_\tau^1} + \wassernoq{\mu_\tau^1}{\mu_0},
  \end{align*}
  apply the reasoning above to the first distance, and Lemma \ref{lem:1ststep} to the second,
  using $\tau^{1/6}\le\tau^{1/12}$.
\end{proof}

        
\subsection{Passage to the continuous equation}
The collected estimates will be enough to pass to the limit $\tau \downarrow 0$
in the time-discrete evolution equation \eqref{eq:discweak}.
We choose for any $\tau>0$ an interpolated time discrete solution $\bar\mu_\tau:\setR_{\ge0}\to\propo$
starting from $\mu_\tau^0=u_0\leb$,
with respective Lebesgue densities $\bar u_\tau:\setR_{\ge0}\to L^1(\rn)$.
\begin{lemma}
  \label{lem:limit}
  There is a sequence $\tau_k\downarrow0$ and a H\"older continuous limit curve $u\leb:\setR_{\ge0}\to (\propo, \wass)$
  with $u(0,\cdot)=u_0$ and $u \in L_{loc}^2(\mathbb{R}_{>0}; W^{2,2}(\rn))$,
  such that 
  \begin{enumerate}   
  \item[i)] \label{it:conv-n} $\bar u_{\tau_k}(t, \cdot)\leb \rightarrow u(t, \cdot) \leb$ narrowly at each $t \in [0,T]$,
  \item[ii)] \label{it:conv-s} $\sqrt{\bar u_{\tau_k}} \rightarrow \sqrt{u}$ strongly in $L_{loc}^2(\mathbb{R}_{>0}; W^{2,2}(\rn))$,
  \item[iii)] \label{it:conv-z} $\sqrt[4]{\bar u_{\tau_k}} \rightarrow \sqrt[4]{u}$ strongly in $L_{loc}^4(\mathbb{R}_{>0}; W^{1,4}(\rn))$, 
  \item[iv)] \label{it:conv-6} $\sqrt[6]{\bar u_{\tau_k}} \rightarrow \sqrt[6]{u}$ weakly in $L_{loc}^6(\mathbb{R}_{>0}; W^{1,6}(\rn))$.
  \end{enumerate}
\end{lemma}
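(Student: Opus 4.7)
The plan is to proceed in three stages: first extract a narrow limit curve via a refined Ascoli-Arzelà argument in the Wasserstein setting, then obtain weak Sobolev compactness from the bounds assembled in the previous sections, and finally upgrade to strong convergence via an Aubin-Lions type argument.

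For the first stage, Proposition \ref{prp:holder} provides a uniform-in-$\tau$ modulus of continuity for $\bar\mu_\tau$ viewed as curves in $(\propo,\wass)$, while \eqref{eq:impromom} bounds the second moments and hence yields tightness of $\{\bar\mu_\tau(t)\}$ on every finite time interval. A diagonal subsequence argument on a countable dense subset of times, followed by extension to all $t\ge0$ using the uniform modulus, produces a null sequence $\tau_k\downarrow 0$ and a limit $t\mapsto \mu(t)=u(t,\cdot)\leb$ with $\bar\mu_{\tau_k}(t)\to \mu(t)$ narrowly for every $t\ge0$. Lower semi-continuity of $\wass$ under narrow convergence transfers \eqref{eq:holderall} to the limit, yielding H\"older continuity in $(\propo,\wass)$ and $u(0,\cdot)=u_0$; this establishes (i).

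For the weak-compactness stage, combine Proposition \ref{prp:universal} with Proposition \ref{prp:savarebnd} to bound $\sqrt{\bar u_\tau}$ in $L^\infty_\loc(\setR_{>0};W^{2,2}(\setR^d))$ and $\sqrt[4]{\bar u_\tau}$ in $L^\infty_\loc(\setR_{>0};W^{1,4}(\setR^d))$, and then use \eqref{eq:zpriori} to additionally bound $\sqrt{\bar u_\tau}$ in $L^2_\loc(\setR_{>0};W^{3,2}(\setR^d))$ and $\sqrt[6]{\bar u_\tau}$ in $L^6_\loc(\setR_{>0};W^{1,6}(\setR^d))$. Banach-Alaoglu delivers weak respectively weak-$*$ subsequential limits in each of these reflexive spaces. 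To identify each weak limit with the corresponding root of $u$, exploit the dimensional embedding $W^{2,2}(\setR^d)\hookrightarrow L^\infty(\setR^d)$ valid for $d\le 3$: it yields a uniform $L^\infty_\loc$ bound on $\bar u_\tau$, which combined with the narrow convergence gives (along a further diagonal extraction) $\bar u_{\tau_k}\to u$ pointwise almost everywhere; continuity of the root maps then forces the weak limits to be $\sqrt u$, $\sqrt[4]u$ and $\sqrt[6]u$. This already settles (iv) and the inclusion $u\in L^2_\loc(\setR_{>0};W^{2,2}(\setR^d))$, since $u=(\sqrt u)^2\in L^\infty_\loc(W^{2,2})$ by the embedding and the product rule.

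For the strong convergences (ii) and (iii), the missing ingredient is a time-regularity estimate for the roots; this is the main obstacle. The plan is to derive one from Proposition \ref{prp:holder} via Kantorovich-Rubinstein duality, which translates the Wasserstein bound on $\bar\mu_\tau(t)-\bar\mu_\tau(s)$ into an estimate on the densities in the dual of $W^{1,\infty}(\setR^d)$. Transferring this to $\sqrt{\bar u_\tau}$ by means of the uniform $L^\infty$ bound on $\bar u_\tau$ and combining with the spatial bound in $L^2_\loc(W^{3,2})$ through the Aubin-Lions-Simon lemma yields strong relative compactness in $L^2_\loc(\setR_{>0};W^{2,2}(K))$ for every compact $K\subset\setR^d$, with limit necessarily $\sqrt u$. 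The final delicate step is the upgrade from $W^{2,2}_\loc$ to $W^{2,2}(\setR^d)$: here I would use a radial cutoff $\chi_R$ together with the tail estimate $\|\sqrt{\bar u_\tau}\|_{L^2(|x|>R)}^2\le R^{-2}\mom(\bar u_\tau)$ and Gagliardo-Nirenberg interpolation between this tail smallness and the uniform $L^2(W^{3,2})$ bound to make $\|\nabla^j\sqrt{\bar u_\tau}\|_{L^2(|x|>R)}$ ($j=0,1,2$) arbitrarily small uniformly in $\tau$. Item (iii) follows along the same lines, replacing the $L^2_\loc(W^{3,2})$ bound by $L^4_\loc(W^{1,4})$ and using that $W^{1,4}$ embeds compactly into $L^4$ on bounded domains.
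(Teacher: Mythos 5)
Your outline for item (i) and the weak-compactness part is in the same spirit as the paper (generalized Arzel\`a--Ascoli, lower semi-continuity of $\nrgm_\lambda$, uniform bounds from Propositions \ref{prp:universal} and \ref{prp:savarebnd} and estimate \eqref{eq:zpriori}), but the stage that upgrades to the strong convergences (ii) and (iii) has two genuine gaps.

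First, the step ``transferring [the Wasserstein--H\"older estimate] to $\sqrt{\bar u_\tau}$ by means of the uniform $L^\infty$ bound'' is not justified. Kantorovich duality does turn \eqref{eq:holderall} into time-equicontinuity of $\bar u_\tau$ in a negative Sobolev norm, but $r\mapsto\sqrt r$ is not Lipschitz near the origin, so no estimate of the type $\|\sqrt{\bar u_\tau(t)}-\sqrt{\bar u_\tau(s)}\|\lesssim\|\bar u_\tau(t)-\bar u_\tau(s)\|$ follows; an $L^\infty$ bound from above says nothing about the set where $\bar u_\tau$ is small, which is exactly where all the regularity difficulties of this problem are concentrated. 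Without a time-equicontinuity estimate on $\sqrt{\bar u_\tau}$ itself, Aubin--Lions--Simon is not applicable to the square root, and the asserted strong relative compactness in $L^2_\loc(W^{2,2}(K))$ does not follow. The paper avoids Aubin--Lions entirely: it works pointwise in time, using that $\sqrt{\bar u_{\tau_k}(t)}$ is bounded in $W^{2,2}(\setR^d)$ for each fixed $t>0$ (Propositions \ref{prp:universal}, \ref{prp:savarebnd}); together with the already-established narrow convergence this gives, via Rellich and a subsequence-uniqueness argument, $\sqrt{\bar u_{\tau_k}(t)}\to\sqrt{u(t)}$ in $W^{1,2}$ at each fixed $t>0$. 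Dominated convergence in time (integrability of $1+t^{-2/3}$ on $(0,T)$) yields convergence in $L^2_\loc(\setR_{>0};W^{1,2})$, and interpolation with the $L^2_\loc(W^{3,2})$ bound from \eqref{eq:zpriori} upgrades this to $L^2_\loc(\setR_{>0};W^{2,2})$.

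Second, your plan for (iii) is too weak: compactness of $W^{1,4}(K)\hookrightarrow L^4(K)$ would at best give strong convergence of $\sqrt[4]{\bar u_\tau}$ in $L^4_\loc(L^4)$, not in $W^{1,4}$, and one cannot pass from strong convergence of $\sqrt{\bar u_\tau}$ to strong convergence of $\sqrt[4]{\bar u_\tau}=\bigl(\sqrt{\bar u_\tau}\bigr)^{1/2}$ in $W^{1,4}$ by elementary means, again because $r\mapsto r^{1/2}$ degenerates near $r=0$. The paper closes this by a dedicated nonlinear power-interpolation result (Theorem \ref{thm:interpol}): strong convergence of $u^\alpha$ in $L^p(W^{1,p})$ plus boundedness of $u^\beta$ in $L^q(W^{1,q})$ yields strong convergence of the intermediate power $u^\gamma$ in $L^r(W^{1,r})$; here one takes $\alpha=\tfrac12$, $\beta=\tfrac16$, $\gamma=\tfrac14$. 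The proof of that theorem (narrow convergence of the velocity plans $(\id\times v_n)\#\mu_n$ in the sense of AGS Theorem 5.4.4 and lower semi-continuity of integral functionals with $r$-growth) is a nontrivial ingredient that some form of your argument would need to replicate.

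You should also note that your tail-control argument for upgrading from $W^{2,2}(K)$ to $W^{2,2}(\setR^d)$ is not quite complete as stated: the Chebyshev bound $\|\sqrt{\bar u_\tau}\|_{L^2(|x|>R)}^2\le R^{-2}\mom(\bar u_\tau)$ controls the $L^2$ tail of $\sqrt{\bar u_\tau}$, but making the tails of $\nabla\sqrt{\bar u_\tau}$ and $\nabla^2\sqrt{\bar u_\tau}$ small uniformly in $\tau$ requires a careful Gagliardo--Nirenberg interpolation that sacrifices a fraction of the $W^{3,2}$ regularity; the paper sidesteps this by interpolating the whole $W^{2,2}$-norm against the globally controlled $W^{1,2}$- and $W^{3,2}$-norms rather than doing the cutoff at the level of $W^{2,2}$.
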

\begin{proof}
  In the following, let some time horizon $T>0$ be fixed.
  Thanks to the moment estimate \eqref{eq:impromom} and the $\tau$-uniform H\"older regularity \eqref{eq:holderall},
  the curves $\bar\mu_\tau$ satisfy the hypotheses of the generalized Arzel\'{a}-Ascoli-Theorem \cite[Theorem 3.3.1]{AGS}, see Lemma \ref{lem:arzela-ascoli}.
  
  Hence, for a suitable vanishing sequence $\tau_k$, the $\bar\mu_{\tau_k}$ converge
  --- narrowly at each $t\ge0$ --- to a limit curve $\mu:\setR_{\ge0}\to\propo$.
  And that limit inherits the H\"older continuity \eqref{eq:holderall}, i.e.,
  \begin{align}
    \label{eq:holder3}
    \wass(\mu(t), \mu(s)) \le C\big(|t-s|^{1/12}+|t-s|^{1/2}\big) \quad \text{for all $s,t\ge0$}.
  \end{align}
  By the lower semi-continuity of $\nrgm_\lambda$, see Proposition \ref{prp:savarelsc},
  and the universal bound \eqref{eq:universal},
  one has
  \begin{align*}
    \nrgm_\lambda(\mu(t)) \le \liminf_{k\to\infty}\nrgm_\lambda(\bar\mu_{\tau_k}(t)) \le \ebnd\big(1+t^{-2/3}\big)
  \end{align*}
  at each $t>0$.
  Hence the limit measures are absolutely continuous, $\mu(t)=u(t)\leb$, with $\sqrt{u(t)}\in W^{2,2}(\rn)$.
  Additionally, in view of the estimate \eqref{eq:savarebnd}, we have that
  \begin{align}
    \label{eq:conv-W22}
    \|\sqrt{\bar u_\tau (t)}\|_{W^{2,2}}^2 \le C\ebnd\big(1+t^{-2/3}\big),
  \end{align}
  and consequently,
  \begin{align}
    \label{eq:conv-W12p}
    \sqrt{\bar u_{\tau_k}(t)}\to\sqrt{u(t)}\quad \text{in $W^{1,2}(\rn)$}
  \end{align}
  at each $t>0$.    
  Indeed, by Rellich's theorem, a subsubsequence of an arbitrary subsequence of $\sqrt{\bar u_{\tau_k}(t)}$
  converges strongly to \emph{some} limit $v$ in $W^{1,2}(\rn)$;
  but then the implied pointwise a.e.\ convergence leads to $v^2=u(t)$.
  By independence of the limit from the chosen subsequence, we conclude convergence of the entire sequence.
  Next, since \eqref{eq:conv-W22} provides a uniform bound on $\sqrt{\bar u_\tau}$ in $L^2(0,T;W^{1,2}(\rn))$,
  the dominated convergence theorem applies and yields
  \begin{align*}
    \sqrt{\bar u_{\tau_k}}\to \sqrt{u}\quad\text{in $L^2(0,T;W^{1,2}(\rn)$}.
  \end{align*}
  We combine this convergence with the uniform bound on $\sqrt{\bar u_\tau}$ in $L^2(0,T;W^{3,2}(\rn))$ from \eqref{eq:zpriori}
  to obtain claim ii) above via interpolation.
  With ii) and the uniform bound on $\sqrt[6]{\bar u_\tau}$ in $L^6(0,T;W^{1,6}(\rn))$ from \eqref{eq:zpriori} at hand,
  we verify claim iii) via Theorem \ref{thm:interpol} from the Appendix.
  The last claim iv) is another direct consequence of the bound \eqref{eq:zpriori} on $\sqrt[6]{\bar u_\tau}$. That $u \in L_{loc}^2(\mathbb{R}_{>0}; W^{2,2}(\rn))$ follows from the arguments given in Lemma \ref{lem:epscont}.

\end{proof}
\begin{lemma}
  \label{lem:limit2}
  The limit $u$ defined in Lemma \ref{lem:limit} above is a solution to \eqref{eq:QDD6intro} in the sense of \eqref{eq:weak}.            
\end{lemma}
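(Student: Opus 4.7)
The plan is to pass to the limit $\tau=\tau_k\downarrow 0$ in a time-summed version of the discrete weak formulation of Lemma \ref{lem:discweakform}, using the convergences provided by Lemma \ref{lem:limit}. Fix $\varphi\in C^\infty_c(\setR_{>0}\times\setR^d)$ with support contained in $[t_0,T]\times\setR^d$ for some $0<t_0<T$, and set $\alpha:=\sup_{(t,x)}\|\nabla^2\varphi(t,x)\|$. Apply \eqref{eq:discweak} at each $n\geq 1$ with the spatial test function $\phi_n:=\varphi(n\tau,\cdot)$ — the uniform choice of $\alpha$ is legitimate since $\varphi\in C^\infty_c$ — multiply by $\tau$, sum over $n$, and perform summation by parts on the first term, whose boundary contributions vanish for $\tau$ small enough because $\varphi$ is compactly supported in $(0,\infty)$. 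One obtains
\begin{equation*}
  \left| -\int_0^\infty\!\!\int \delta^\tau_+\varphi_\tau\,\bar u_\tau\, dx\,dt + \int_0^\infty \N{\bar u_\tau}{\varphi_\tau}\,dt + 2\lambda^3\!\!\int_0^\infty\!\!\int x\cdot\nabla\varphi_\tau\,\bar u_\tau\, dx\,dt\right| \leq C\tau,
\end{equation*}
where $\varphi_\tau(t,x)=\varphi(n\tau,x)$ and $\delta^\tau_+\varphi_\tau(t,x)=\tau^{-1}[\varphi((n+1)\tau,x)-\varphi(n\tau,x)]$ for $t\in((n-1)\tau, n\tau]$. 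The bound $C\tau$ on the right is obtained by combining \eqref{eq:ce-sum} (applied at the first index $\underline n$ with $\underline n\tau\geq t_0$) with the universal energy bound \eqref{eq:universal}, yielding $\alpha\tau\,\nrgm_\lambda(u^{\underline n}_\tau)\leq\alpha\tau\,\ebnd(1+t_0^{-2/3})$.

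The time-derivative term converges to $-\int_0^\infty\!\!\int\partial_t\varphi\,u\,dx\,dt$ by uniform convergence $\delta^\tau_+\varphi_\tau\to\partial_t\varphi$ combined with the narrow convergence $\bar u_{\tau_k}(t,\cdot)\leb\to u(t,\cdot)\leb$ at each $t$ (claim i) of Lemma \ref{lem:limit}) and dominated convergence over the compact temporal support. The transport term is handled analogously, since $x\cdot\nabla\varphi$ is bounded and compactly supported.

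The main obstacle is the nonlinear term, whose integrand depends quadratically on $\ell_\tau:=\nabla^2 s_\tau - 4\nabla z_\tau\otimes\nabla z_\tau$ with $s_\tau=\sqrt{\bar u_\tau}$ and $z_\tau=\sqrt[4]{\bar u_\tau}$. From claims ii) and iii) of Lemma \ref{lem:limit}, $\nabla^2 s_{\tau_k}\to\nabla^2 s$ strongly in $L^2_{\loc}(\setR_{>0};L^2(\setR^d))$ and $\nabla z_{\tau_k}\to\nabla z$ strongly in $L^4_{\loc}(\setR_{>0};L^4(\setR^d))$; by H\"older the tensor product $\nabla z_{\tau_k}\otimes\nabla z_{\tau_k}\to\nabla z\otimes\nabla z$ converges strongly in $L^2_{\loc}(L^2)$, and therefore $\ell_{\tau_k}\to\ell$ strongly in $L^2_{\loc}(\setR_{>0};L^2(\setR^d))$. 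It follows that $\ell_{\tau_k}^2\to\ell^2$ strongly in $L^1_{\loc}(L^1)$ on the spatial support of $\varphi$, which handles the quadratic piece $\int\nabla^2\varphi_\tau:\ell_\tau^2\,dx$. For the two remaining (linear-in-$\ell$) terms, the dimensional restriction $d\leq 3$ enters through Sobolev embeddings: $W^{2,2}(\setR^d)\hookrightarrow L^\infty(\setR^d)$ upgrades ii) to $s_{\tau_k}\to s$ strongly in $L^2_{\loc}(L^\infty)$, while $W^{1,2}(\setR^d)\hookrightarrow L^6(\setR^d)$ gives $\nabla s_{\tau_k}\to\nabla s$ strongly in $L^2_{\loc}(L^6)$. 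Combined with the strong $L^2_{\loc}(L^2)$ convergence of $\ell_{\tau_k}$ and the uniform convergence of $\nabla^2\Delta\varphi_\tau$ and $\nabla^3\varphi_\tau$ to their unshifted counterparts, H\"older gives convergence of $s_{\tau_k}\,\nabla^2\Delta\varphi_\tau:\ell_{\tau_k}$ and $\nabla s_{\tau_k}\cdot\nabla^3\varphi_\tau:\ell_{\tau_k}$ in $L^1(\setR_{>0}\times\setR^d)$. Passing to the limit $\tau_k\downarrow 0$ in the displayed inequality then yields exactly \eqref{eq:weak}.
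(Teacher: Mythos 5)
Your proof is correct and follows the same strategy as the paper's: test the discrete weak formulation \eqref{eq:discweak} with $\varphi(n\tau,\cdot)$, sum over $n$, perform summation by parts on the discrete time derivative, bound the accumulated Wasserstein error, and pass to the limit using the convergences from Lemma \ref{lem:limit}. Two points where you are more careful than the paper deserve mention. First, the paper's displayed estimate shows the right-hand side as $\frac{\alpha\tau_k}{2}\sum_{n\ge1}(\wassernoq{u_{\tau_k}^n}{u_{\tau_k}^{n-1}}/\tau_k)^2$, which, read literally against \eqref{eq:ce-sum}, would require $\nrgm_\lambda(\mu_0)<\infty$ (not assumed) and would not even tend to zero; you correctly observe that only indices with $\varphi(n\tau,\cdot)\neq0$ contribute, invoke \eqref{eq:ce-sum} from the first such index $\underline n$ with $\underline n\tau\ge t_0$, and use the universal bound \eqref{eq:universal} there, producing the $O(\tau)$ bound. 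Second, the paper dismisses the limit passage in the nonlinear term with ``It is now easily checked,'' whereas you spell out the $L^2_{\loc}(L^2)$ convergence of $\ell_{\tau_k}$, the resulting $L^1_{\loc}$ convergence of $\ell_{\tau_k}^2$, and the Sobolev embeddings ($d\le3$) needed for the two linear-in-$\ell$ terms; for the latter a direct Cauchy--Schwarz between $s_{\tau_k}\in L^2_{\loc}(L^2)$ (resp.\ $\nabla s_{\tau_k}\in L^2_{\loc}(L^2)$) and $\ell_{\tau_k}\in L^2_{\loc}(L^2)$ would also do the job without the embeddings, but your version is equally valid.
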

\begin{proof}
  Let $\psi\in C^\infty_c(\setR_{>0}\times\setR^d)$ be a test function in time and space.
  Fix some $\tau_k$;
  without loss of generality, we assume that $\tau_k$ is so small that $\psi(t,x)=0$ for all $0<t<\tau_k$ and $x\in\setR^d$.
  For each $n=1,2,\ldots$, use $\varphi_{\tau_k}^n:=\psi(n\tau_k;\cdot)$ as test function in \eqref{eq:discweak},
  then sum over all $n\in\setN$;
  this is actually a \emph{finite} sum since $\psi$ is compactly supported.
  With the help of the triangle inequality,
  \begin{align*}
    \left|-\tau\sum_{n=1}^\infty\intrn\frac{\varphi_{\tau_k}^{n+1}-\varphi_{\tau_k}^n}{\tau_k}u_{\tau_k}^n\dd x
    + \tau\sum_{n=1}^\infty\left(\N{u_{\tau_k}^n}{\varphi_{\tau_k}^n}+2\lambda^3\intrn x\cdot\nabla\varphi_{\tau_k}^nu_{\tau_k}^n\dd x\right)\right| \\
    \le \frac{\alpha\tau_k}2\sum_{n=1}^\infty\left(\frac{\wassernoq{u_{\tau_k}^n}{u_{\tau_k}^{n-1}}}{\tau_k}\right)^2.
  \end{align*}
  The right-hand side converges to zero for $k\to\infty$ thanks to the estimate \eqref{eq:ce-sum}.
  This implies, after rewriting everything in terms of the interpolated functions,
  \begin{align*}
    \lim_{k\to\infty}\int_0^T\intrn \delta_{\tau_k}\psi\,\bar u_{\tau_k}\dd x\dd t
    = \lim_{k\to\infty}\int_0^T\left(\N{\bar u_{\tau_k}}{\bar\psi_{\tau_k}}+2\lambda^3\intrn x\cdot\nabla\bar\psi_{\tau_k}\bar u_{\tau_k}\dd x\right)\dd t,
  \end{align*}
  where $T>0$ is chosen large enough so that $\operatorname{supp}\psi\subset(0,T)\times\setR^d$,
  and we have introduced
  \begin{align*}
    \bar\psi_{\tau_k}(t) = \psi(n\tau_k), \quad
    \delta_{\tau_k}\psi(t) = \frac{\psi((n+1)\tau_k)-\psi(n\tau_k)}{\tau_k} \quad
    \text{for all $t\in((n-1]\tau_k,n\tau_k]$}.
  \end{align*}
  Notice that
  \begin{align*}
    \bar\psi_{\tau_k}\to\psi, \quad
    \delta_{\tau_k}\psi \to\partial_t\psi \quad
    \text{uniformly on $\setR_{>0}\times\setR^d$}.
  \end{align*}
  It is now easily checked that the convergence stated in Lemma \ref{lem:limit} above are sufficient
  to pass to the respective limits inside the integrals,
  that is
  \begin{align*}
    \int_0^T\intrn\partial_t\psi\,u\dd x\dd t
    = \int_0^T\left(\N{u}{\psi}+2\lambda^3\intrn x\cdot\nabla\psi\,u\dd x\right)\dd t.
  \end{align*}
  This is equivalent to the weak formulation \eqref{eq:weak}.
\end{proof}
This finishes the proof of Theorem \ref{thm:existence}.

\section{Long Time Behaviour}
\label{sct:longtime}

        \subsection{An illustration by ODEs}
        \label{sct:illustration}
We illustrate the role played by \eqref{eq:justmagic}
by an analogous situation for smooth gradient flows on $\setR^n$.
We are given a family of strictly convex functions $h_\lambda:\setR^n\to\setR$ and fix a parameter $\lambda>0$.
From here, we define derived functions $f_\lambda,e_\lambda:\setR^n\to\setR$ by
\begin{align}
  \label{eq:justmagicODE}
  f_\lambda:=\frac12|\dff h_\lambda|^2,
  \quad
  e_\lambda:=\frac12\dff f_\lambda\cdot\dff h_\lambda + \lambda f_\lambda
  = \frac12\dff h_\lambda\cdot\dff^2h_\lambda\cdot\dff h_\lambda + \frac\lambda2|\dff h_\lambda|^2.
\end{align}
Thanks to strict convexity of $h_\lambda$,
there exists precisely one minimum point $x_\lambda$ of $h_\lambda$,
and this is by construction also the unique minimum point of $f_\lambda$ and of $e_\lambda$.
We wish to study the linearized dynamics of the gradient flow $\dot x=-\dff e_\lambda(x)$ near the stationary point $x_\lambda$.
Since
\begin{align*}
  \dff e_\lambda = \frac12\dff h_\lambda\cdot\dff^3h_\lambda\cdot\dff h_\lambda
  + \dff^2h_\lambda\cdot\dff^2h_\lambda\cdot\dff h_\lambda + \lambda\dff^2h_\lambda\cdot\dff h_\lambda,
\end{align*}
and since $\dff h_\lambda(x_\lambda)=0$,
it follows that the linearization $A_\lambda\in\setR^{n\times n}$
of $e_\lambda$'s gradient vector field $\dff e_\lambda(x)$ near $x=x_\lambda$ is given by
\begin{align*}
  A_\lambda\xi = \big[\dff^2h_\lambda(x_\lambda)\big]^3\xi + \lambda\big[\dff^2h_\lambda(x_\lambda)\big]^2\xi.
\end{align*}
Assuming that the eigenvalues $\mu_1,\ldots,\mu_n$ of $\dff^2h_\lambda(x_\lambda)$ are known,
the eigenvalue of $A$ are known as well:
these are precisely $\mu_k^3+\lambda\mu_k^2$ for $k=1,2,\ldots,n$.
In particular, the smallest of these is a lower bound on the exponential rate of convergence to $x_\lambda$
in the linearized dynamics.

 
\subsection{Derivation of the relation \eqref{eq:justmagic}}
\label{sct:HFE}
We shall now derive the relation \eqref{eq:justmagic},
which is the basis for all further analysis below,
and which plays the same role for \eqref{eq:QDD6intro}
as \eqref{eq:justmagicODE} has played in the analysis of the toy problem above.
\begin{lemma}
  Let $w_r$ be a solution to the linear Fokker-Planck equation \eqref{eq:LFP}.
  Then, at each $r>0$,
  \begin{align}
    \label{eq:magicH0}
    \frac12\frac{\dn}{\dd r}\ent_\lambda(w_r)
    &= -\intrn w_r\left|\nabla\log\frac{w_r}{U_\lambda}\right|^2\dd x, \\
    \nonumber
    \frac14\frac{\dn^2}{\dd r^2}\ent_\lambda(w_r)
    &=-\frac14\intrn \dv\left(w_r\nabla\log\frac{w_r}{U_\lambda}\right) \left|\nabla\log\frac{w_r}{U_\lambda}\right|^2\dd x \\
    \label{eq:magicF0}
    &\qquad + \frac12\intrn \frac1{w_r}\left[\dv\left(w_r\nabla\log\frac{w_r}{U_\lambda}\right)\right]^2\dd x.
  \end{align}                  
\end{lemma}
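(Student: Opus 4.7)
The core observation is that \eqref{eq:LFP} can be rewritten in ``gradient-flow'' form relative to $U_\lambda$. Setting $\rho_r := \log(w_r/U_\lambda)$ and using $\nabla\log U_\lambda = -\lambda x$, one has $w_r\nabla\rho_r = \nabla w_r + \lambda x w_r$, hence
\begin{align*}
\partial_r w_r = \dv(w_r\nabla\rho_r).
\end{align*}
Moreover, since $\log U_\lambda$ differs from $-\tfrac{\lambda}{2}|x|^2$ only by a constant and $w_r$ is a probability density, one has $\ent_\lambda(w_r) = \intrn w_r\rho_r\dd x + \text{const}$. Both formulas will follow from this rewriting combined with the chain rule and integration by parts.

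For \eqref{eq:magicH0}, the plan is to differentiate $r\mapsto\intrn w_r\rho_r\dd x$. Using $\partial_r\rho_r = \partial_r w_r/w_r$, the product rule gives $\partial_r(w_r\rho_r) = (\partial_r w_r)(1+\rho_r)$; upon integrating, the constant contribution drops out by mass conservation, $\int\partial_rw_r\dd x = 0$. Substituting the gradient-flow form for $\partial_r w_r$ and performing a single integration by parts then produces $-\intrn w_r|\nabla\rho_r|^2\dd x$, which is \eqref{eq:magicH0} up to the stated normalization.

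For \eqref{eq:magicF0}, I would differentiate the identity just obtained once more. Two contributions arise: the first, coming from $\partial_r w_r$ inside the integrand $w_r|\nabla\rho_r|^2$, equals $-\intrn\dv(w_r\nabla\rho_r)|\nabla\rho_r|^2\dd x$ after substituting the gradient-flow form. The second, coming from differentiating $|\nabla\rho_r|^2 = \nabla\rho_r\cdot\nabla\rho_r$, is $-2\intrn w_r\nabla\rho_r\cdot\nabla(\partial_r\rho_r)\dd x$; one integration by parts and the identity $\partial_r\rho_r = \dv(w_r\nabla\rho_r)/w_r$ then convert this into $2\intrn w_r^{-1}[\dv(w_r\nabla\rho_r)]^2\dd x$. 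Collecting these two contributions yields \eqref{eq:magicF0}.

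The only substantive technical point is justifying differentiation under the integral and the integrations by parts on the unbounded domain $\setR^d$. For $r>0$ the solution $w_r$ of \eqref{eq:LFP} is smooth and strictly positive with Gaussian-type decay of $w_r$ and its derivatives, inherited from the Mehler-type representation of the semigroup; this makes all relevant vector fields integrable and allows Lemma \ref{lem:Gauss} (or its standard variants) to rule out boundary contributions at infinity. I expect this justification, rather than the formal algebra, to be the main technical effort.
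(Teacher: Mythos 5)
Your proposal is correct and follows essentially the same route as the paper: rewrite \eqref{eq:LFP} as $\partial_r w_r=\dv(w_r\nabla\log(w_r/U_\lambda))$, represent $\ent_\lambda$ as the relative entropy with respect to $U_\lambda$ up to a constant, differentiate under the integral, and integrate by parts, with the paper likewise deferring the justification of these operations to the smoothness, positivity, and decay of the Fokker--Planck semigroup. Your hedge ``up to the stated normalization'' is well placed: both your computation and the paper's own proof yield $\frac12\frac{\dn}{\dd r}\ent_\lambda(w_r)=-\frac12\intrn w_r\bigl|\nabla\log\tfrac{w_r}{U_\lambda}\bigr|^2\dd x$, so the right-hand side of \eqref{eq:magicH0} as printed is missing a factor $\frac12$ (the second identity \eqref{eq:magicF0}, and the downstream relation \eqref{eq:magicH}, are consistent with the corrected version).
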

\begin{proof}
  Thanks to the regularizing properties of the linear Fokker-Planck equation,
  $(r,x)\mapsto w_r(x)$ is an everywhere positive $C^\infty$-function on $\setR_{>0}\times\setR^d$,
  where it satisfies \eqref{eq:LFP} in the classical sense,
  or equivalently
  \begin{align}
    \label{eq:LFP2}
    \partial_rw_r 
    = \dv\left(w_r\nabla\log\frac{w_r}{U_\lambda}\right).
  \end{align}
  Moreover, $w_r$ is a probability density at any $r>0$,
  and $w_r(x)$ decays sufficiently rapidly for $|x|\to\infty$ to justify the integration by parts below.
  
  To derive $\ent_\lambda$, we rewrite it in the form
  \begin{align}
    \label{eq:relativeH}
    \ent_\lambda(w) = \intrn \frac{w}{U_\lambda}\log\frac{w}{U_\lambda}\,U_\lambda\dd x.
  \end{align}
  For its $r$-derivative, we obtain after subsitution of \eqref{eq:LFP2} and an integration by parts:
  \begin{align*}
    \frac12\frac{\dn}{\dd r}\ent_\lambda(w_r)
    = \frac12 \intrn\left(1+\log\frac{w_r}{U_\lambda}\right)\,\partial_rw_r\dd x
    = -\frac12 \intrn w_r \left|\nabla\log\frac{w_r}{U_\lambda}\right|^2\dd x,
  \end{align*}
  which is \eqref{eq:magicH0}.
  
  For computation of the second $r$-derivative of $\ent_\lambda$,
  we differentiate in \eqref{eq:magicH0}, substitute \eqref{eq:LFP2} again, and integrate by parts
  to obtain
  \begin{align*}
    \frac14\frac{\dn^2}{\dd r^2}\ent_\lambda(w_r)
    &= -\frac14\frac{\dn}{\dd r}\intrn w_r \left|\nabla\log\frac{w_r}{U_\lambda}\right|^2\dd x \\
    &= -\frac14\intrn \partial_rw_r\, \left|\nabla\log\frac{w_r}{U_\lambda}\right|^2\dd x
      - \frac12\intrn w_r \,\nabla\left[\log\frac{w_r}{U_\lambda}\right]\cdot\nabla\left[\frac{\partial_rw_r}{w_r}\right]\dd x \\
    &= -\frac14\intrn \dv\left(w_r\nabla\log\frac{w_r}{U_\lambda}\right) \left|\nabla\log\frac{w_r}{U_\lambda}\right|^2\dd x
      + \frac12\intrn \frac1{w_r} \,\left[\dv\left(w_r\nabla\log\frac{w_r}{U_\lambda}\right) \right]^2\dd x,
  \end{align*}
  and this is \eqref{eq:magicF0}.
\end{proof}
\begin{lemma}
  Let $w_r$ be a solution to the linear Fokker-Planck equation \eqref{eq:LFP}.
  Then
  \begin{align}
    \label{eq:magicH}
    -\frac12\frac{\dn}{\dd r}\ent_\lambda(w_r) &= \fish_\lambda(w_r) - \fish_\lambda(U_\lambda), \\
    \label{eq:magicF}
    -\frac12\frac{\dn}{\dd r}\fish_\lambda(w_r) &= \nrg_\lambda(w_r) - \nrg(U_\lambda) -  \lambda\big[\fish_\lambda(w_r) - \fish(U_\lambda)\big],
  \end{align}
  for all $r>0$.
\end{lemma}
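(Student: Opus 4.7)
The plan is to derive both identities \eqref{eq:magicH} and \eqref{eq:magicF} from the preceding lemma (formulas \eqref{eq:magicH0} and \eqref{eq:magicF0}) by combining algebraic expansion with the explicit form of $U_\lambda$, complemented by a Bakry--\'Emery-type integration by parts for the second identity. Throughout, one uses that $w_r$ is smooth, strictly positive and has Gaussian decay at infinity for $r>0$, so that all integrations by parts are justified (e.g.\ via Lemma \ref{lem:Gauss}).

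First I would prove \eqref{eq:magicH} directly from \eqref{eq:magicH0}. Using $\nabla \log U_\lambda = -\lambda x$, one expands
\[
\Big|\nabla\log\frac{w_r}{U_\lambda}\Big|^2 = |\nabla\log w_r|^2 + 2\lambda\, x\cdot\nabla\log w_r + \lambda^2|x|^2,
\]
multiplies by $w_r$, and integrates; the cross term is rewritten via $w_r\nabla\log w_r=\nabla w_r$ and the integration by parts $\intrn x\cdot\nabla w_r\dd x=-d$. Comparing with the definition of $\fish_\lambda$ in \eqref{eq:HFE} and noting the explicit value $\fish_\lambda(U_\lambda)=\lambda d$, one obtains the identity
\[
\intrn w_r\Big|\nabla\log\frac{w_r}{U_\lambda}\Big|^2\dd x = 2\bigl[\fish_\lambda(w_r)-\fish_\lambda(U_\lambda)\bigr],
\]
which, inserted into \eqref{eq:magicH0}, yields \eqref{eq:magicH}.

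For \eqref{eq:magicF} I would proceed in three substeps. Differentiating \eqref{eq:magicH} in $r$ gives $-\tfrac12\tfrac{\dn}{\dd r}\fish_\lambda(w_r)=\tfrac14\tfrac{\dn^2}{\dd r^2}\ent_\lambda(w_r)$, so the task reduces to showing
\[
\tfrac14\tfrac{\dn^2}{\dd r^2}\ent_\lambda(w_r) = \nrg_\lambda(w_r)-\nrg_\lambda(U_\lambda) - \lambda\bigl[\fish_\lambda(w_r)-\fish_\lambda(U_\lambda)\bigr].
\]
The first substep is a purely algebraic expansion of the Hessian term: writing $\nabla^2\log(w_r/U_\lambda) = \nabla^2\log w_r + \lambda\eins$, taking Frobenius squared norms, and integrating against $w_r$, one uses the identity $\intrn w_r \Delta\log w_r \dd x = -\intrn w_r |\nabla\log w_r|^2 \dd x$ to identify
\[
\tfrac12\intrn w_r\big\|\nabla^2\log\tfrac{w_r}{U_\lambda}\big\|^2\dd x = \nrg_\lambda(w_r)-\nrg_\lambda(U_\lambda) - 2\lambda\bigl[\fish_\lambda(w_r)-\fish_\lambda(U_\lambda)\bigr],
\]
where the value of the constant $\nrg_\lambda(U_\lambda)$ is fixed by inserting $w_r=U_\lambda$, which makes the left-hand side vanish. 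The second substep is the key Bakry--\'Emery-type identity
\[
\tfrac14\tfrac{\dn^2}{\dd r^2}\ent_\lambda(w_r) = \tfrac12\intrn w_r\big\|\nabla^2\log\tfrac{w_r}{U_\lambda}\big\|^2\dd x + \lambda\bigl[\fish_\lambda(w_r)-\fish_\lambda(U_\lambda)\bigr].
\]
To prove this I would start from the expression \eqref{eq:magicF0}, substitute $\partial_r w_r=\dv(w_r\nabla\phi)$ with $\phi:=\log(w_r/U_\lambda)$, and reorganize both terms by integration by parts, invoking Bochner's identity
\[
\tfrac12\Delta|\nabla\phi|^2 = \|\nabla^2\phi\|^2 + \nabla\phi\cdot\nabla\Delta\phi,
\]
together with $\nabla\log w_r = \nabla\phi-\lambda x$, to collect terms into the Hessian norm $\|\nabla^2\phi\|^2$ plus a curvature contribution $\lambda|\nabla\phi|^2$. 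Combining these two substeps produces the target identity, and hence \eqref{eq:magicF}.

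The main obstacle will be the Bakry--\'Emery computation in the second substep: it requires carefully tracking several cross terms coming from the drift $-\lambda x$ and verifying that the combinatorial coefficients assemble into the pure $\Gamma_2$-expression $\|\nabla^2\phi\|^2 + \lambda|\nabla\phi|^2$ after repeated integration by parts. The routine decay of $w_r$ and the stated admissibility of Gauss' theorem make the boundary terms harmless; the delicate point is purely the algebra of rearranging the integrand of \eqref{eq:magicF0} into the $\Gamma_2$-form.
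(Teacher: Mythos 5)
Your proof of \eqref{eq:magicH} is essentially identical to the paper's: both substitute $\nabla\log(w_r/U_\lambda)=\nabla\log w_r + \lambda x$, expand the square, integrate by parts in the cross term using $\intrn x\cdot\nabla w_r\dd x=-d$, and identify $\fish_\lambda(U_\lambda)=d\lambda$.

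For \eqref{eq:magicF} you take a genuinely different route. The paper substitutes the same decomposition of $\nabla\log(w_r/U_\lambda)$ directly into the right-hand side of \eqref{eq:magicF0} and grinds through several consecutive integrations by parts until the result appears; there is no appeal to any conceptual framework. You instead (a) differentiate \eqref{eq:magicH} in $r$ to reduce the claim to an identity for $\tfrac14\tfrac{\dn^2}{\dd r^2}\ent_\lambda(w_r)$, (b) prove the purely algebraic decomposition $\tfrac12\intrn w_r\|\nabla^2\log\tfrac{w_r}{U_\lambda}\|^2\dd x = \nrg_\lambda(w_r)-\nrg_\lambda(U_\lambda)-2\lambda[\fish_\lambda(w_r)-\fish_\lambda(U_\lambda)]$ via $\nabla^2\log\tfrac{w_r}{U_\lambda}=\nabla^2\log w_r+\lambda\eins$ together with $\intrn w_r\Delta\log w_r\dd x=-\intrn w_r|\nabla\log w_r|^2\dd x$, and (c) invoke the Bakry--\'Emery $\Gamma_2$-identity $\tfrac14\tfrac{\dn^2}{\dd r^2}\ent_\lambda(w_r)=\tfrac12\intrn w_r\big(\|\nabla^2\phi_r\|^2+\lambda|\nabla\phi_r|^2\big)\dd x$ with $\phi_r=\log(w_r/U_\lambda)$, which together with \eqref{eq:magicH} and (b) gives the claim after cancelling one copy of $\lambda[\fish_\lambda-\fish_\lambda(U_\lambda)]$. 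I checked the algebra in (b) — the coefficient $2\lambda$ is right — and the combination in (c) closes correctly. The $\Gamma_2$ identity you need in (c) is a standard fact for the Ornstein--Uhlenbeck generator (curvature $\lambda$), and you correctly flag that its derivation from \eqref{eq:magicF0} is where the integration-by-parts bookkeeping lives. The trade-off: the paper's proof is fully self-contained and elementary, whereas yours isolates the one conceptually meaningful input (the Bakry--\'Emery curvature condition for the Gaussian), makes the structure of \eqref{eq:magicF} transparent, and would generalize immediately to other log-concave reference measures with $\nabla^2(-\log U)\ge\lambda\eins$ — but it does defer the hardest computation to a known (if classical) lemma rather than carrying it out explicitly as the paper does.
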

\begin{proof}
  As in the previous proof, we rely on the regularity of the Fokker-Planck flow in the calculations below.
  Still, to enhance readability, we shall use simply $u$ instead of $w_r$ below.
  
  To establish the connection of the right hand side in \eqref{eq:magicH0} to $\fish_\lambda$,
  we substitute
  \begin{align}
    \label{eq:explicit}
    \nabla\log\frac{u}{U_\lambda} = \nabla \log u+\lambda x,
  \end{align}
  and then integrate by parts in the term with linear dependence on $x$:
  \begin{align*}
    \frac12\intrn u|[\nabla\log u+\lambda x]|^2\dd x
    &= \frac12\intrn u|\nabla\log u|^2\dd x + \frac{\lambda^2}2\intrn |x|^2 u\dd x+ \lambda \intrn x\cdot\nabla u\dd x\\
    &= \fish_\lambda(u) - d\lambda\intrn u\dd x.
  \end{align*} 
  Since $u$ is a probability density, the constant above amounts to $d\lambda$.
  To conclude \eqref{eq:magicH} from here,
  it remains to observe that
  \begin{align*}
    \fish_\lambda(U_\lambda)
    = \frac12\intrn U_\lambda|\nabla\log U_\lambda|^2 \dd x + \frac{\lambda^2}2\intrn |x|^2U_\lambda\dd x
    = \lambda^2\intrn |x|^2U_\lambda\dd x = d\lambda.
  \end{align*}
  Next we need to show that the right-hand sides in \eqref{eq:magicF0} and \eqref{eq:magicF}, respectively, are the same.
  Substitution of \eqref{eq:explicit} into \eqref{eq:magicF0} yields, after an integration by parts,
  \begin{align*}
    -\frac12\frac{\dn}{\dd r}\fish_\lambda(u)
    &=-\frac14\intrn\big|[\nabla\log u+\lambda x]\big|^2\dv(u[\nabla \log u+\lambda x])\dd x \\
    &\qquad -\frac12\intrn u[\nabla\log u+\lambda x]\cdot\nabla\left(\frac{\dv(u[\nabla\log u+\lambda x])}u\right)\dd x \\
    &=-\frac14\intrn\big|[\nabla\log u+\lambda x]\big|^2\dv(u[\nabla \log u+\lambda x])\dd x \\
    &\qquad +\frac12\intrn [\nabla\log u+\lambda x]\cdot\nabla\log u\, \dv(u[\nabla\log u+\lambda x])\dd x \\
    &\qquad - \frac12\intrn[\nabla\log u+\lambda x]\cdot\nabla \dv(u[\nabla\log u+\lambda x])\dd x \\
    &=\frac14\intrn (|\nabla\log u|^2-\lambda^2|x|^2)\,\dv(u[\nabla\log u+\lambda x])\dd x \\
    &\qquad - \frac12\intrn[\nabla\log u+\lambda x]\cdot\dv\big\{\nabla\otimes(u[\nabla\log u+\lambda x])\big\}\dd x.
  \end{align*}
  Now we integrate by parts to remove the divergence in both integrals:
  \begin{align*}
    -\frac12\frac{\dn}{\dd r}\fish_\lambda(u)
    &=-\frac12\intrn u\nabla\log u\cdot\nabla^2\log u\cdot[\nabla\log u+\lambda x]\dd x
      +\frac{\lambda^2}2\intrn x\cdot\nabla u \dd x + \frac{\lambda^3}2\intrn u|x|^2\dd x \\
    &\qquad + \frac12\intrn (\nabla^2\log u+\lambda\eins):\big(\nabla u\otimes[\nabla\log u+\lambda x]\big)\dd x
      +\frac12 \intrn u\|\nabla^2\log u+\lambda\eins\|^2\dd x \\
    &= -\frac{d\lambda^2}2\intrn u\dd x + \frac{\lambda^3}2\intrn |x|^2u\dd x 
      + \frac\lambda2\intrn u|\nabla\log u|^2\dd x + \frac{\lambda^2}2\intrn x\cdot\nabla u\dd x \\
    &\qquad  + \frac12\intrn u\|\nabla^2\log u\|^2\dd x + \lambda\intrn u\Delta\log u\dd x + \frac{d\lambda^2}2\intrn u\dd x \\
    &= \frac12\intrn u\|\nabla^2\log u\|^2\dd x + \frac{\lambda^3}2\intrn |x|^2u\dd x
      -\frac\lambda2\intrn u|\nabla\log u|^2\dd x-\frac{d\lambda^2}2 \\
    &= \frac12\intrn u\|\nabla^2\log u\|^2\dd x + \lambda^3\intrn |x|^2u\dd x
      -\lambda\big[\fish_\lambda(u) - \fish_\lambda(U_\lambda)\big] - \frac{3d\lambda^2}2.
  \end{align*}
  Finally, observe that
  \begin{align*}
    \nrg_\lambda(U_\lambda) = \frac12\intrn U_\lambda\|\lambda\eins\|^2\dd x + \lambda^3\intrn|x|^2U_\lambda\dd x
    = \frac{d\lambda^2}2 + \frac{d\lambda^3}\lambda = \frac{3d\lambda^2}2.
  \end{align*}
  This yields \eqref{eq:magicF}.
\end{proof}
The combination of \eqref{eq:magicH} and \eqref{eq:magicF} suggests that for all sufficiently regular $u$,
\begin{align}
  \label{eq:ddt}
  \nrg_\lambda(u) = -\frac12\frac{\dn}{\dd r}\fish_\lambda(u) + \lambda\fish_\lambda(u) + \frac{d\lambda^2}2
  = \frac14\frac{\dn^2}{\dd r^2}\ent_\lambda(u) - \frac{\lambda}2\frac{\dn}{\dd r}\ent_\lambda(u) + \frac{3d\lambda^2}2.
\end{align}


\subsection{The displacement Hessian}
\label{sct:disphess}
The geometric idea behind the linearization by means of the displacement Hessian
is the representation of the dynamics on the space of probability measures in Lagrangian coordinates.
For the moment, let us consider general $L^2$-Wasserstein gradient flow,
written in the form of a nonlinear transport equation,
\begin{align}
  \label{eq:transport}
  \partial_tu_t = -\dv(u_t\velo[u_t])
  \quad \text{with} \quad
  \velo[u_t] = -\frac{\delta\fnc}{\delta u}\bigg|_{u=u_t}.
\end{align}
By a Lagrangian representation of a solution $u_t$ with respect to some reference measure $U$,
we mean a time-dependent diffeomorphism $X_t:\setR^d\to\setR^d$
satisfying
\begin{align}
  \label{eq:wXU}
  u_t = X_t\#U = \frac{U}{\det\dff X_t}\circ X_t^{-1}.
\end{align}
Note that there is a freedom of gauge here:
\eqref{eq:wXU} determines $X_t$ only up to a concatenation from the right
with any $t$-dependent map that leaves $U$ invariant.

Since $u_t$ satisfies the transport equation \eqref{eq:transport},
it is easily deduced that $X_t$ ``follows the vector field $\velo$''.
I.e., it satisfies the Lagrangian equation
\begin{align}
  \label{eq:Lagrange}
  \partial_tX_t  \cong \velo[u_t]\circ X_t.
\end{align}
Here $\cong$ refers to the aforementioned freedom of gauge for $X_t$:
the left and right sides in \eqref{eq:Lagrange} may differ by a vector field $\zeta_t$
that is divergence-free with respect to $U$, i.e., $\dv(U\zeta_t)=0$.

Now assume that $U$ is a stationary solution of \eqref{eq:transport};
then $X=\id$ is a stationary solution of \eqref{eq:Lagrange}.
The Wasserstein-linearization of \eqref{eq:transport} around $U$
is an appropriate linearization of \eqref{eq:Lagrange} around $\id$.
Taking into account that $\velo[U]=0$,
one obtains for any smooth, compactly supported vector field $\Xi$:
\begin{align*}
  \frac{\dn}{\dd h}\bigg|_{h=0}\big(\velo[(\id-h\zeta)\#U]\circ(\id-h\Xi)\big)
  = \nabla\left[\frac{\delta^2\fnc}{\delta u^2}\bigg|_{u=U}\dv(U\Xi)\right].
\end{align*}
Consequently, the linearized Lagrangian dynamics is given by
\begin{align}
  \label{eq:lLagrange}
  \partial_t\Xi_t \cong \nabla\left[\frac{\delta^2\fnc}{\delta u^2}\bigg|_{u=U}\dv(U\Xi)\right].
\end{align}
For definition of the displacement Hessian and the Wasserstein linearization,
one chooses a particular gauge in \eqref{eq:Lagrange} --- and consequently also in \eqref{eq:lLagrange} ---
to remove the ambiguity.
Thanks to the Brenier theorem from optimal transportation,
one may assume $X_t=\id-\nabla\varphi_t$ with some time-dependent potential $\varphi_t$.
Inserting this into \eqref{eq:lLagrange} yields
\begin{align}
  \label{eq:llLagrange}
  \partial_t\varphi_t = \frac{\delta^2\fnc}{\delta u^2}\bigg|_{u=U}\dv(U\nabla\varphi_t).
\end{align}
The operator on the right-hand side acting on $\varphi_t$ is the the negative of the displacement Hessian of $\fnc$.
More rigorously, one defines:
\begin{definition}
  Assume that $U$ is a global minimizer of $\fnc$,
  and assume further that there exists a densely defined self-adjoint linear operator $\theL$ on $H^1(\setR^d;U\leb)$
  such that, for any test function $\psi\in C^\infty_c(\setR^d)$,
  \begin{align*}
    \intrn\nabla\psi\cdot\nabla\big(\theL\psi\big) U\dd x = \frac{\dn^2}{\dd\sigma^2}\bigg|_{\sigma=0}\fnc(u_\sigma),
  \end{align*}
  where $u_\sigma$ is the solution to the transport equation $\partial_\sigma u_\sigma +\dv(u_\sigma\nabla\psi)=0$.
  Then $\theL$ is called displacement Hessian of $\fnc$ at $U$, and is denoted by $\hess_U\fnc$.  
\end{definition}


\subsection{Calculation of the displacement Hessian}
We shall now calculate the displacement Hessian for the three functionals from \eqref{eq:HFE}.
Note that they all have $U_\lambda$ as global minimizer.
\begin{proposition}
  Define the linear operator $\theL$ on $C^\infty_c(\setR^d)$ by
  \begin{align*}
    \theL\varphi := -\frac1{U_\lambda}\dv(U_\lambda\nabla\varphi).
  \end{align*}
  Then we have for all $\psi\in C^\infty_c(\setR^d)$:
  \begin{align}
    \label{eq:hessianHFE}
    \big(\hess_{U_\lambda}\ent_\lambda\big)\psi = \theL\psi,
    \quad
    \big(\hess_{U_\lambda}\fish_\lambda\psi\big) = \theL^2\psi,
    \quad
    \big(\hess_{U_\lambda}\nrg_\lambda\big)\psi = (\theL^3+\lambda \theL^2)\psi.
  \end{align}
\end{proposition}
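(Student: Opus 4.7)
The plan is to verify the three identities in order of increasing difficulty, exploiting the hierarchical structure built up through the relations \eqref{eq:lessmagic} and \eqref{eq:justmagic}. In each case, the strategy is the same: expand $\frac{\dn^2}{\dd\sigma^2}\big|_{\sigma=0}$ of the functional evaluated along $u_\sigma$, where $\partial_\sigma u_\sigma + \dv(u_\sigma\nabla\psi)=0$, observe which terms vanish at $\sigma=0$ because $u_0=U_\lambda$ makes $\nabla\log(u_\sigma/U_\lambda)$ vanish there, and then rearrange the surviving term via self-adjointness of $\theL$ on $L^2(U_\lambda\leb)$ into the required form $\intrn \nabla\psi\cdot\nabla(\mathcal{A}\psi)U_\lambda\dd x$.

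For $\ent_\lambda$, I would compute directly. Using the continuity equation, the first variation is $\intrn u_\sigma\nabla\psi\cdot\nabla\log(u_\sigma/U_\lambda)\dd x$, which vanishes at $\sigma=0$. Differentiating once more and evaluating at $\sigma=0$, only the term where $\partial_\sigma$ hits $\nabla\log(u_\sigma/U_\lambda)$ survives; since $\partial_\sigma|_0\log(u_\sigma/U_\lambda)=-U_\lambda^{-1}\dv(U_\lambda\nabla\psi)=\theL\psi$, this yields $\intrn U_\lambda\nabla\psi\cdot\nabla(\theL\psi)\dd x$, so $\hess_{U_\lambda}\ent_\lambda=\theL$.

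For $\fish_\lambda$, I would use the relation \eqref{eq:magicH0} (an instance of \eqref{eq:lessmagic}) which rewrites the functional as $\fish_\lambda(u)-\fish_\lambda(U_\lambda)=\tfrac12\intrn u|\nabla\log(u/U_\lambda)|^2\dd x$. Because $\nabla\log(u_\sigma/U_\lambda)$ is $O(\sigma)$ near $\sigma=0$, the only second-variation term that survives at $\sigma=0$ is the one where both derivatives act on the gradient of the log-density, giving $\intrn U_\lambda|\nabla(\theL\psi)|^2\dd x$. Two integrations by parts — one moving a $\nabla$ onto $U_\lambda\nabla\theL\psi$ (producing $-U_\lambda\theL^2\psi$), then absorbing $\theL$ via its symmetry in $L^2(U_\lambda\leb)$ — convert this into $\intrn U_\lambda\nabla\psi\cdot\nabla(\theL^2\psi)\dd x$, yielding $\hess_{U_\lambda}\fish_\lambda=\theL^2$.

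For $\nrg_\lambda$, the third identity is precisely the content of Theorem \ref{thm:hessian}, which was derived earlier via the structural relation \eqref{eq:justmagic}: applying $\frac{\dn^2}{\dd\sigma^2}|_{\sigma=0}$ to the evaluation of \eqref{eq:justmagic} at the Fokker--Planck flow started from $u_\sigma$, and combining with the Fisher-information step just carried out, produces exactly the operator $\theL^3+\lambda\theL^2$. The main conceptual obstacle is thus not in the present proposition but already settled; the only residual issue is routine justification of the interchanges of integration and differentiation, which is harmless here since $U_\lambda$ is Gaussian and $\psi\in C^\infty_c$ makes every integrand rapidly decaying. The subtlety worth flagging is that $\theL\psi$ involves the unbounded factor $\lambda x\cdot\nabla\psi$ and so is not itself compactly supported — but the Gaussian weight $U_\lambda$ absorbs this polynomial growth, so the integration by parts required to swap $\theL$ across the $U_\lambda$-weighted inner product remains valid at every stage.
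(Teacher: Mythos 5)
Your derivations of the first two identities are correct and follow the same route as the paper: for $\ent_\lambda$ you observe that the first variation $\intrn u_\sigma\nabla\psi\cdot\nabla\log(u_\sigma/U_\lambda)\dd x$ vanishes at $\sigma=0$ and that differentiating the log-factor produces $\theL\psi$; for $\fish_\lambda$ you use the representation coming from \eqref{eq:magicH0}--\eqref{eq:magicH}, note that only the term with two log-derivatives survives, and recast $\intrn U_\lambda|\nabla\theL\psi|^2\dd x$ as $\intrn U_\lambda\nabla\psi\cdot\nabla(\theL^2\psi)\dd x$ by self-adjointness of $\theL$ in $L^2(U_\lambda\leb)$. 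These are exactly the paper's computations.

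The $\nrg_\lambda$ part, however, contains a gap. You cite Theorem \ref{thm:hessian} as ``derived earlier'' --- but it was only \emph{stated} in the introduction; the proposition you are asked to prove is precisely the paper's proof of Theorem \ref{thm:hessian}, so the citation is circular. Your parenthetical sketch (``apply $\frac{\dn^2}{\dd\sigma^2}|_{\sigma=0}$ to \eqref{eq:justmagic} evaluated along the Fokker--Planck flow started from $u_\sigma$'') does point at the paper's strategy, but it skips the step where the $\theL^3$ term actually arises and where the real work lies. Concretely, one must use \eqref{eq:magicF0} to rewrite $\frac14\frac{\dn^2}{\dd r^2}\ent_\lambda(w_r)|_{r=0}-\frac\lambda2\frac{\dn}{\dd r}\ent_\lambda(w_r)|_{r=0}$ as an explicit $r$-free functional of $u_\sigma$ consisting of two integrals built from $\log(u_\sigma/U_\lambda)$; then observe, as with $\fish_\lambda$, that every second-$\sigma$-variation term vanishes at $\sigma=0$ except the single one in which both $\log$-factors get differentiated, namely
\begin{align*}
\intrn\frac1{U_\lambda}\Bigl[\dv\Bigl(U_\lambda\nabla\frac{\partial_\sigma|_{\sigma=0}u_\sigma}{U_\lambda}\Bigr)\Bigr]^2\dd x
= \intrn U_\lambda\,(\theL^2\psi)^2\dd x
= \intrn U_\lambda\,\nabla\psi\cdot\nabla(\theL^3\psi)\dd x,
\end{align*}
the last equality by two applications of the $L^2(U_\lambda\leb)$-self-adjointness of $\theL$. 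Adding the $\lambda\,\theL^2$ contribution from the $\fish_\lambda$ step then gives the claimed $\theL^3+\lambda\theL^2$. Without spelling out this surviving-term analysis, the proposal does not establish the third identity.
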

\begin{proof}
  Let some $\psi\in C^\infty_c(\setR^d)$ be fixed.
  For the curve $u_\s$, we choose the solution of the transport problem
  \begin{align}
    \label{eq:teq}
    \partial_\s u_\s=-\dv(u_\s\nabla\psi),\quad u_0=U_\lambda.
  \end{align}
  Note that the transport vector field $\nabla\psi$
  is independent of time and smooth with compact support,
  hence the $(\s;x)\mapsto u_\s(x)$ is an everywhere positive smooth function on $\setR\times\setR^d$.
  
  For the relative entropy, recalling the representation \eqref{eq:relativeH},
  and that $\log\frac u{U_\lambda}\equiv0$ for $u=U_\lambda$,
  \begin{align*}
    \frac{\dn^2}{\dd\s^2}\bigg|_{\s=0}\ent_\lambda(u_\s)
    = \frac{\dn}{\dd\s}\bigg|_{\s=0}\intrn\left(1+\log\frac{u_\s}{U_\lambda}\right)\partial_\s u_\s\dd x
    = \intrn\frac{\big(\partial_\s\big|_{\s=0} u_\s\big)^2}{U_\lambda}\dd x.
  \end{align*}
  Substituting \eqref{eq:teq} and integrating by parts, we obtain
  \begin{align*}
    \frac{\dn^2}{\dd\s^2}\bigg|_{\s=0}\ent_\lambda(u_\s)
    = \intrn \frac1{U_\lambda}[\dv(U_\lambda\nabla\psi)]^2\dd x
    = -\intrn U_\lambda \nabla\psi\cdot\left(\frac1{U_\lambda}\dv(U_\lambda\nabla\psi)\right)\dd x.
  \end{align*}
  This gives the first identity in \eqref{eq:hessianHFE}.
  
  For the perturbed Fisher information, we start from the representation
  \begin{align*}
    \fish_\lambda(u) = \frac12\intrn u\left|\nabla\log\frac{u}{U_\lambda}\right|^2\dd x + \fish_\lambda(U_\lambda)
  \end{align*}
  that follows from \eqref{eq:magicH0} and \eqref{eq:magicH},
  and obtain
  \begin{align*}
    \frac{\dn^2}{\dd\s^2}\bigg|_{\s=0}\fish_\lambda(u_\s)
    &= \frac{\dn}{\dd\s}\bigg|_{\s=0}\intrn \left\{\frac12(\partial_\s u_\s)\left|\nabla\log\frac{u_\s}{U_\lambda}\right|^2
      +u_\s\,\nabla\left[\log\frac{u_\s}{U_\lambda}\right]\cdot\nabla\left[\frac{\partial_\s u_\s}{u_\s}\right]\right\} \dd x \\
    &= \intrn U_\lambda\left|\nabla\frac{\partial_\s\big|_{\s=0}u_\s}{U_\lambda}\right|^2\dd x.
  \end{align*}
  Hence, by \eqref{eq:teq} and two consecutive integration by parts,
  \begin{align*}
    \frac{\dn^2}{\dd\s^2}\bigg|_{\s=0}\fish_\lambda(u_\s)
    &= \intrn U_\lambda\left|\nabla\frac{\dv(U_\lambda\nabla\psi)}{U_\lambda}\right|^2\dd x \\
    &= -\intrn \frac1{U_\lambda}\dv(U_\lambda\nabla\psi)\,\dv\left[U_\lambda\nabla\left(\frac1{U_\lambda}\dv(U_\lambda\nabla\psi)\right)\right]\dd x \\
    &= \intrn U_\lambda\nabla\psi\cdot\nabla\left\{\frac1{U_\lambda}\dv\left[U_\lambda\nabla\left(\frac1{U_\lambda}\dv(U_\lambda\nabla\psi)\right)\right]\right\}\dd x,
  \end{align*}
  which confirms the second identity in \eqref{eq:hessianHFE}.
  
  Finally, for computation of the Hessian of $\nrg_\lambda$,
  we make use of \eqref{eq:magicF0} and \eqref{eq:magicF} and so obtain
  \begin{align*}
    \frac{\dn^2}{\dd\s^2}\bigg|_{\s=0}\nrg_\lambda(u_\s)
    &= \lambda \frac{\dn^2}{\dd\s^2}\bigg|_{\s=0}\fish_\lambda(u_\s) \\
    &\qquad - \frac14\frac{\dn^2}{\dd\s^2}\bigg|_{\s=0}\intrn\dv\left(u_\s\nabla\log\frac{u_\s}{U_\lambda}\right)\left|\nabla\log\frac{u_\s}{U_\lambda}\right|^2\dd x \\
    &\qquad + \frac12\frac{\dn^2}{\dd\s^2}\bigg|_{\s=0}\intrn\frac1{u_\s}\left[\dv\left(u_\s\nabla\log\frac{u_\s}{U_\lambda}\right)\right]^2\dd x 
  \end{align*}
  An quick inspection of the last two lines above reveals that there is exactly one term
  that does not vanish automatically because of $\log\frac{u_0}{U_\lambda}\equiv0$,
  namely the one where both logarithmic terms inside the last integral get differentiated.
  In combination with the calculation for $\fish_\lambda$ above, we conclude that
  \begin{equation}
    \label{eq:energywithfish}
    \begin{split}
      \frac{\dn^2}{\dd\s^2}\bigg|_{\s=0}\nrg_\lambda(u_\s)
      &= \lambda\intrn U_\lambda \nabla\psi\cdot \nabla (\theL^2\psi)\dd x
      + \intrn\frac1{U_\lambda} \left[\dv\left(U_\lambda\nabla\frac{\partial_\s|_{\s=0}u_\s}{U_\lambda}\right)\right]^2\dd x.                    
    \end{split}
  \end{equation}
  We consider the last integral, substitute \eqref{eq:teq}, and repeatedly integrate by parts:
  \begin{align*}
    &\intrn\frac1{U_\lambda} \left[\dv\left(U_\lambda\nabla\frac{\partial_\s|_{\s=0}u_\s}{U_\lambda}\right)\right]^2\dd x
      =\intrn\frac1{U_\lambda} \left[\dv\left(U_\lambda\nabla\frac{\dv(U_\lambda\nabla\psi)}{U_\lambda}\right)\right]^2\dd x \\
    &\qquad = \intrn \frac1{U_\lambda}\dv(U_\lambda\nabla\psi)\,
      \dv\left\{U_\lambda\nabla \left[\frac1{U_\lambda} \dv\left(U_\lambda\nabla\frac{\dv(U_\lambda\nabla\psi)}{U_\lambda}\right)\right]\right\}
      \dd x \\
    &\qquad = -\intrn U_\lambda\nabla\psi\cdot\nabla\left(
      \frac1{U_\lambda}\dv\left\{U_\lambda\nabla \left[\frac1{U_\lambda} \dv\left(U_\lambda\nabla\frac{\dv(U_\lambda\nabla\psi)}{U_\lambda}\right)\right]\right\}
      \right)\dd x.
  \end{align*}
  Substituting this in \eqref{eq:energywithfish} yields the final identity in \eqref{eq:hessianHFE}.
\end{proof}


\subsection{Special solutions and their linearization}
To illustrate the applicability of the linearization of \eqref{eq:QDD6intro},
we completely characterize the dynamics of \eqref{eq:QDD6intro}
and its Wasserstein linearization
\begin{align}
  \label{eq:QDD6linear}
  \partial_t\varphi_t = -\big(\theL_\lambda^3 + \lambda\theL_\lambda^2\big)\varphi_t  
\end{align}
in the invariant finite dimensional submanifold defined by affine deformations of Gaussians.
More specifically, we consider the --- non-linear and linearized --- dynamics
induced on the set of positive definite matrices $S\in\setR^{d\times d}$ and vectors $a\in\setR^d$
by means of
\begin{align}
  \label{eq:Xansatz}
  u_t = X_t\#U_\lambda
  \quad\text{with}\quad
  X_t(y) = S_t^{1/2}y+a_t.
\end{align}
We begin with the linearized dynamics.
Since we have 
\begin{align}
  \label{eq:psiansatz}
  X_t=\id-\nabla\psi_t
  \quad\text{with}\quad
  \psi_t(y) = \frac12y^T(\eins-S_t^{1/2})y - a_t^Ty,
\end{align}
we need to study solutions $\varphi_t$ to \eqref{eq:QDD6linear} of quadratic type,
\begin{align*}
  \varphi_t (x) = \frac12x^TA_tx + b_t^Tx + c_t.
\end{align*}
It is obvious that these form an invariant subspace under \eqref{eq:QDD6linear}.
For this ansatz, we obtain
\begin{align*}
  \theL_\lambda^3\varphi_t+\lambda\theL_\lambda^2\varphi_t
  = 6\lambda^3x^TA_tx + 2\lambda^3b_t^Tx - 6\lambda^2\ntr[A_t],
\end{align*}
and so it follows from \eqref{eq:QDD6linear} that
\begin{align}
  \label{eq:linAbc}
  \dot A_t = -12\lambda^3A_t, \quad \dot b_t=-2\lambda^3b_t, \quad \dot c_t = -6\lambda^2\ntr[A_t].
\end{align}
Note that the equation for $c_t$ can be neglected,
since the value of $c_t$ is irrelevant in $X_t=\id-\nabla\varphi_t$.

Now for the full nonlinear dynamics.
From the ansatz \eqref{eq:Xansatz},
we obtain that
\begin{align*}
  u_t(x) = \frac1{\sqrt{(2\pi)^d\det S_t}}\exp\left(-\frac\lambda2(x-a_t)^TS_t^{-1}(x-a_t)\right).
\end{align*}
It follows that, on the one hand,
\begin{align}
  \nonumber
  \partial_tu_t(x)
  &= \left[-\frac12\left(\frac{\dn}{\dd t}\log\det S_t\right)
    - \frac\lambda2 (x-a_t)^T\frac{\dn}{\dd t}\big(S_t^{-1}\big)(x-a_t)
    + \lambda(x-a)^TS_t^{-1}\dot a_t\right]
    u_t(x) \\
  \label{eq:special1}
  &= \left[
    \frac\lambda2(x-a)^TS_t^{-1}\dot S_tS_t^{-1}(x-a_t) + \lambda(x-a_t)^TS_t^{-1}\dot a_t
    -\frac12\ntr[S_t^{-1}\dot S_t]
    \right]u_t(x).
\end{align}
And on the other hand,
using that $\nabla^2u_t(x) = (\lambda^2S_t^{-1}(x-a_t)\otimes S_t^{-1}(x-a_t)-\lambda S_t^{-1})u_t(x)$,
and also that $\nabla^2\log u_t \equiv -\lambda S_t^{-1}$,
we obtain
\begin{align*}
  \Phi(u_t) = \frac32\lambda^2\|S_t^{-1}\|^2 -\lambda^3(x-a_t)^TS_t^{-3}(x-a_t),
\end{align*}
and thus further
\begin{align}
  \nonumber
  &\dv\big(u_t\big[\nabla\Phi(u_t)+2\lambda^3x\big]\big)
    =\dv\big(u_t\big[2\lambda^3(\eins-S_t^{-3})(x-a_t)+2\lambda^3a_t\big]\big) 
  \\
  \label{eq:special2}
  &= \left(
  2 \lambda^3\ntr[\eins-S_t^{-3}]
  -2\lambda^4(x-a_t)^TS_t^{-1}a_t
  -2\lambda^4(x-a_t)^T(S_t^{-1}-S_t^{-4})(x-a_t)
  \right)u_t(x).
\end{align}
By equating the right-hand sides of \eqref{eq:special1} and \eqref{eq:special2},
we see that \eqref{eq:QDD6intro} is satisfied if and only if
\begin{align*}
  \dot S_t = -4\lambda^3(S_t-S_t^{-2}),
  \quad
  \dot a_t = -2\lambda^3a_t,
  \quad
  \ntr[S_t^{-1}\dot S_t] = -4\lambda^3\ntr[\eins-S_t^{-3}].
\end{align*}
Note that the last of these three differential equations is a trivial consequence of the first.
Note further that the equations for $a_t$ and $S_t$ are easy to solve:
\begin{align}
  \label{eq:aS}
  a_t = e^{-2\lambda^3t}a_0, \quad
  S_t = \big(\eins+e^{-12\lambda^3t}(S_0^3-\eins)\big)^{1/3};
\end{align}
the third root is well-defined since the expression
in the brackets is a positive definite matrix at each $t\ge0$.

In view of \eqref{eq:psiansatz}, the quantities $(A_t,b_t)$ and $(S_t,a_t)$ are related by
\begin{align*}
  S_t = (\eins - A_t)^2, \quad a_t = -b_t.
\end{align*}
It is thus obvious that the linear ODEs \eqref{eq:linAbc}
capture the correct asymptotic behaviour of $S_t$ and $a_t$ as $t\to\infty$.
It should be noted that the $d$-fold eigenvalues $2\lambda^3$ and $12\lambda^3$
consistute the lowest eigenvalues of $\hess_{U_\lambda}\nrg_\lambda$'s spectrum;
the next eigenvalue is $36\lambda^3$.
The interpretation in terms of higher-order asymptotics would be this:
an arbitrary initial datum $u_0$ that sufficiently close to $U_\lambda$ in an appropriate sense
can be modified by an suitable transformation of the form
\begin{align*}
  \tilde u_0 = G\#u_0, \quad\text{with}\quad
  G(x) = S^{1/2}x + a,
\end{align*}
such that the corresponding solution $\tilde u_t$ converges to $U_\lambda$
at an exponential rate of $36\lambda^3$.
The rigorous proof of such a result is currently out of reach.
For a related result on the porous medium equation, we refer to \cite{Koch}.

                
\subsection{Asymptotic self-similarity}
\label{sct:selfsim}
We discuss the following consequence of Conjecture \ref{cjt:two}.
\begin{corollary}
  If conjecture \ref{cjt:two} is true,
  then for any initial datum $u_0$ of finite second moment and finite entropy,
  there is a corresponding weak solution $u$ to the initial value problem for \eqref{eq:QDD6intro} with $\lambda = 0$
  that approaches the self-similar solution $u_*$ from \eqref{eq:selfsim} at algebraic rate $t^{-1/6}$.
  That is,
  \begin{align}
    \label{eq:ssasymptotics}
    \big\|u(t;\cdot)-u_*(t;\cdot)\big\|_{L^1(\setR^d)}\le C(u_0)(1+12t)^{-1/6}
  \end{align}
\end{corollary}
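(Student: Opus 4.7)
The plan is to reduce the corollary to Conjecture \ref{cjt:two} via the standard self-similar change of variables that converts the $\lambda=0$ equation into its $\lambda=1$ analogue. Mirroring \eqref{eq:selfsim}, I set
\[
R(t)=(1+12t)^{1/6}, \qquad \sigma(t)=\tfrac{1}{12}\log(1+12t), \qquad v(\sigma(t),y):=R(t)^{d}\,u(t,R(t)y).
\]
A direct scaling computation, using that $\Phi$ is spatially $(-4)$-homogeneous and that the outer operator $\dv(u\nabla\,\cdot\,)$ contributes two additional powers of $R^{-1}$, shows that whenever $u$ solves \eqref{eq:QDD6intro} with $\lambda=0$, the rescaled density $v$ solves \eqref{eq:QDD6intro} with $\lambda=1$:
\[
\partial_{\sigma}v=\dv_y\!\bigl(v\nabla_y\Phi(v)\bigr)+2\,\dv_y(yv).
\]
Under this rescaling the self-similar profile $u_{*}$ from \eqref{eq:selfsim} is precisely the preimage of the stationary Gaussian $U_{1}$, and the dilation $x=R(t)y$ is an isometry between the two $L^{1}$-spaces, so that
\[
\|u(t,\cdot)-u_{*}(t,\cdot)\|_{L^1(\setR^d)}=\|v(\sigma(t),\cdot)-U_{1}\|_{L^1(\setR^d)}.
\]

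Since the rescaled initial datum $v(0,\cdot)=u_{0}$ inherits finite entropy and finite second moment, Theorem \ref{thm:existence} applied with $\lambda=1$ produces a weak solution $v$ of the rescaled equation as a minimizing-movement limit for $\nrgm_{1}$. Applying Conjecture \ref{cjt:two} to this $v$ then delivers an exponential bound $\|v(\sigma,\cdot)-U_{1}\|_{L^1}\le C(u_{0})\,e^{-\alpha\sigma}$ with $\alpha$ the decay rate asserted by the conjecture, and substituting the definition of $\sigma$ translates this into algebraic decay of the form $C(u_{0})(1+12t)^{-\alpha/12}$. The spectral-gap value $\alpha=2\lambda^{3}=2$, coming from the smallest nonzero eigenvalue of $\theL_{1}^{3}+\theL_{1}^{2}$ (cf.\ the discussion following Theorem \ref{thm:hessian}), then yields precisely the exponent $1/6$ appearing in \eqref{eq:ssasymptotics}.

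The one real obstacle is a compatibility check: Conjecture \ref{cjt:two} is stated only for weak solutions constructed as in Theorem \ref{thm:existence}, so I must make sure that the rescaling really intertwines those two constructions. The cleanest route is to argue in the other direction: given $u_{0}$, set $v_{0}:=u_{0}$, build $v$ via Theorem \ref{thm:existence}, extract the exponential decay on $v$ from Conjecture \ref{cjt:two}, and \emph{define} the candidate $u$ by inverse rescaling, $u(t,x):=R(t)^{-d}\,v(\sigma(t),x/R(t))$. What remains is to check that this $u$ satisfies \eqref{eq:weak}--\eqref{eq:N} at $\lambda=0$ with initial datum $u_{0}$; this is a mechanical but lengthy substitution based on the transformation rules $\sqrt{v}=R^{d/2}\sqrt{u}(R\cdot)$ and $\sqrt[4]{v}=R^{d/4}\sqrt[4]{u}(R\cdot)$ together with the induced scaling of the bilinear form $\ell$ from \eqref{eq:N}. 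After matching test functions via $\tilde\varphi(t,x)=\varphi(\sigma(t),x/R(t))$, all powers of $R$ cancel, so that \eqref{eq:weak} for $u$ at $\lambda=0$ and \eqref{eq:weak} for $v$ at $\lambda=1$ become equivalent, and the corollary follows at once.
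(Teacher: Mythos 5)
Your proposal follows the same strategy as the paper's proof: build the $\lambda=1$ solution $v$ via Theorem \ref{thm:existence}, transfer Conjecture \ref{cjt:two}'s decay to it, and pull this back along the self-similar change of variables. The structural steps (identifying $u_*$ with $U_1$, the $L^1$-isometry of the dilation, and the direction of construction — first $v$, then $u$) are all correct and match the paper.

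The one point of genuine divergence is the time reparametrization and the value of the exponential rate, and here a factor-of-two issue deserves careful scrutiny. You set $\sigma(t)=\tfrac1{12}\log(1+12t)$, whereas the paper uses $\tau(t)=\log\kappa(t)=\tfrac16\log(1+12t)$. In fact your $\sigma$ is the one dictated by the PDE: with $v(\sigma,y)=\kappa(t)^d u(t,\kappa(t)y)$ and $\Phi$ scaling as $\kappa^{-4}$, matching $\partial_\sigma v = \dv_y(v\nabla_y\Phi(v))+2\dv_y(yv)$ requires $\dot\sigma=\kappa^{-6}=(1+12t)^{-1}$ together with $\dot\kappa/\kappa=2\kappa^{-6}$, which gives exactly $\sigma=\tfrac1{12}\log(1+12t)$; the paper's $\tau=\log\kappa$ satisfies $\dot\tau=\dot\kappa/\kappa=2\kappa^{-6}$ and is off by the same factor of two. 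However, you then set $\alpha=2\lambda^3=2$ and describe this as ``the decay rate asserted by the conjecture,'' but \eqref{eq:5} in Conjecture \ref{cjt:two} explicitly asserts $\|u(t)-U_\lambda\|_{L^1}\le C\,e^{-\lambda^3 t}$, i.e.\ rate $\lambda^3=1$, not $2\lambda^3$. With your (correct) $\sigma$ and the conjecture's literal rate $\alpha=1$, the exponent would be $1/12$, not the claimed $1/6$. In other words, the paper's factor-of-two in the time change and your factor-of-two in the rate play the same compensating role; neither argument is entirely internally consistent with the statements as written, and the apparent agreement on the final exponent $1/6$ hinges on whether the intended rate in Conjecture \ref{cjt:two} is $\lambda^3$ or the spectral-gap value $2\lambda^3$. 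You should either cite the conjecture's rate as stated and accept exponent $1/12$, or explicitly flag that you are replacing it with the spectral-gap rate.
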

\begin{proof}
  By the usual rescaling for homogeneous parabolic equations,
  we relate the solution $u$ for $\lambda=0$ to a solution $v$ for $\lambda=1$. 
  Specifically, we set
  \begin{align*}
    \kappa(t) := (1+12t)^{1/6},
    \quad
    \tau(t) = \log \kappa(t),
  \end{align*}
  and then introduce $v=v(s;y)$ implicitly via
  \begin{align}
    \label{eq:v2u}
    u(t;x) = \kappa(t)^{-d}v\big(\tau(t);\kappa(t)^{-1}x\big).
  \end{align}
  Note that $u(0;x)=v(0;x)$.
  Now let $v$ be the solution to \eqref{eq:QDD6intro} with $\lambda=1$ according to Theorem \ref{thm:existence}
  for the initial condition $v(0;x)=u_0(x)$.
  Performing a change of variables under the integral in \eqref{eq:N}
  it is easily seen that $u$ satisfies the weak formulation \eqref{eq:weak} with $\lambda=0$.

  To conclude the self-similar asymptotics \eqref{eq:ssasymptotics} via Conjecture \ref{cjt:two},
  we write \eqref{eq:5} for $v$ in place of $u$, and with $\lambda=1$,
  and perform a change of variables under the integral:
  \begin{align*}
    C(u^0)(1+12t)^{-1/6} = C(u^0)e^{-\tau(t)}
    &\ge \big\|v(\tau(t);\cdot)-U_1\big\|_{L^1(\setR^d)} \\
    & = \intrn |v(\tau(t);y)-U_1(y)|\dd y \\
    & = \intrn |v(\tau(t);\kappa(t)^{-1}x)-U_1(\kappa(t)^{-1}x)|\,\kappa(t)^{-d}\dd x \\
    &= \intrn |u(t;x)-u_*(t;x)|\dd x
    = \|u(t;\cdot)-u_*(t;\cdot)\|_{L^1(\setR^d)},
  \end{align*}
  where we have used the definition of $u_*$ in \eqref{eq:selfsim}.
\end{proof}



\begin{appendix}

  \section{Two elementary inequalities for sums of powers}
  \begin{lemma}
    \label{lem:elementary1}
    For each $n=1,2,\ldots$,
    \begin{align}
      \label{eq:elementary1}
      (n\tau)^{5/6}\le \tau\sum_{N=1}^n(N\tau)^{-1/6}
    \end{align}
  \end{lemma}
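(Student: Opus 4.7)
The plan is to reduce \eqref{eq:elementary1} to a dimensionless inequality by pulling out the factors of $\tau$, and then observe that the resulting bound is a one-line consequence of the monotonicity of $N\mapsto N^{-1/6}$.

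First I would rewrite the right-hand side as
\begin{align*}
  \tau\sum_{N=1}^n(N\tau)^{-1/6} = \tau^{5/6}\sum_{N=1}^n N^{-1/6},
\end{align*}
so that, after dividing by $\tau^{5/6}$, the claim \eqref{eq:elementary1} becomes the purely numerical inequality
\begin{align*}
  n^{5/6}\le \sum_{N=1}^n N^{-1/6}.
\end{align*}

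To establish this, I would simply use that the function $x\mapsto x^{-1/6}$ is decreasing on $(0,\infty)$. Consequently $N^{-1/6}\ge n^{-1/6}$ whenever $1\le N\le n$, and summing those $n$ inequalities gives
\begin{align*}
  \sum_{N=1}^n N^{-1/6} \ge n\cdot n^{-1/6} = n^{5/6},
\end{align*}
which is what we needed. There is no real obstacle here; the only point to watch is that the exponents $-1/6$ and $5/6$ are complementary in the sense that $1-\tfrac16=\tfrac56$, which is exactly what makes the trivial lower bound $n\cdot n^{-1/6}$ match the desired $n^{5/6}$. Multiplying back by $\tau^{5/6}$ yields \eqref{eq:elementary1}.
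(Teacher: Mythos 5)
Your argument is correct, and it is actually simpler and more transparent than the paper's. The paper proves \eqref{eq:elementary1} by induction on $n$: the base case $n=1$ is equality, and the inductive step is carried out via the ``below tangent'' estimate for the concave function $s\mapsto s^{5/6}$, namely $((n+1)\tau)^{5/6}\le (n\tau)^{5/6}+\tfrac56(n\tau)^{-1/6}\tau$, together with the crude numerical bound $\tfrac56\big(\tfrac{n+1}{n}\big)^{1/6}<1$. Your route instead pulls out the $\tau$-factors to reduce to the dimensionless statement $n^{5/6}\le\sum_{N=1}^n N^{-1/6}$, and then obtains this in one line by bounding each summand from below by $n^{-1/6}$ using monotonicity. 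This avoids induction and concavity entirely; the only structure it uses is that the exponents $-1/6$ and $5/6$ are complementary. Both proofs are elementary, but yours is shorter and, arguably, reveals more clearly why the inequality is true. (One could note that the paper's companion Lemma~\ref{lem:elementary2} is a genuine concavity estimate that does not admit such a trivial reduction, which may explain why the authors used the tangent-line technique uniformly for both.)
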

  \begin{proof}
    We prove \eqref{eq:elementary1} by induction on $n$.
    For $n=1$, one has equality.
    Now assume that \eqref{eq:elementary1} holds for $n\ge1$;
    we need to show that
    \begin{align*}
      ((n+1)\tau)^{5/6}-(n\tau)^{5/6}\le ((n+1)\tau)^{-1/6}\tau.
    \end{align*}
    By the ``below tangent formula'' for the concave function $s\mapsto s^{5/6}$,
    we have
    \begin{align*}
      ((n+1)\tau)^{5/6} \le (n\tau)^{5/6} + \frac56(n\tau)^{-1/6}\tau.
    \end{align*}
    We conclude by observing that
    \begin{align*}
      \frac56\left(\frac{n+1}n\right)^{1/6} \le \frac56 2^{1/6} < 1
    \end{align*}
    for all $n=1,2,\ldots$.
  \end{proof}
  \begin{lemma}
    \label{lem:elementary2}
    For integers $\overline M>\underline M\ge 1$,
    \begin{align}
      \label{eq:elementary2}
      \frac\tau{12}\sum_{n=\underline M}^{\overline M-1}(n\tau)^{-5/6} \le (\overline M\tau)^{1/6}-(\underline M\tau)^{1/6}.
    \end{align}
  \end{lemma}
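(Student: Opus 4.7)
The plan is to telescope the right-hand side as a sum over the integers $n=\underline M,\ldots,\overline M-1$,
\[
(\overline M\tau)^{1/6}-(\underline M\tau)^{1/6}
= \sum_{n=\underline M}^{\overline M-1}\bigl[((n+1)\tau)^{1/6}-(n\tau)^{1/6}\bigr],
\]
and to establish the term-by-term inequality
\[
((n+1)\tau)^{1/6}-(n\tau)^{1/6}\;\ge\;\frac{\tau}{12}\,(n\tau)^{-5/6}
\qquad\text{for every integer }n\ge 1.
\]
Summing this over $n$ from $\underline M$ to $\overline M-1$ yields precisely \eqref{eq:elementary2}, so all the work is contained in this one-step estimate.

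For the term-by-term bound, I would apply the mean value theorem to the concave function $g(s):=s^{1/6}$ on the interval $[n\tau,(n+1)\tau]$. Since $g'(s)=\tfrac{1}{6}s^{-5/6}$, there exists $\xi\in(n\tau,(n+1)\tau)$ with
\[
((n+1)\tau)^{1/6}-(n\tau)^{1/6}=\frac{\tau}{6}\,\xi^{-5/6}.
\]
Because $n\ge 1$ implies $(n+1)\tau\le 2n\tau$, and because $2^{5/6}\le 2$, one has $\xi^{5/6}\le((n+1)\tau)^{5/6}\le 2\,(n\tau)^{5/6}$, and thus
\[
\xi^{-5/6}\;\ge\;\frac{1}{2}(n\tau)^{-5/6}.
\]
Combining the two displays gives the desired $\tfrac{\tau}{12}(n\tau)^{-5/6}$ lower bound and completes the proof.

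The only subtlety I would flag is that the constant $\tfrac{1}{12}$ is not quite reached by a naive integral comparison, which would produce $\int_{(\underline M-1)\tau}^{(\overline M-1)\tau}s^{-5/6}\dd s$ on the right-hand side and fail badly when $\underline M=1$ (the integrand is not integrable near $0$). The factor $\tfrac{1}{12}=\tfrac{1}{6}\cdot\tfrac{1}{2}$ is engineered precisely so that the crude estimate $(n+1)\tau\le 2n\tau$—which requires $n\ge 1$ and hence the hypothesis $\underline M\ge 1$—suffices to absorb the loss; tight tracking of the constant is thus the only real point to be checked.
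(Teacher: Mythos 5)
Your proof is correct and is essentially the same as the paper's: both reduce the claim to the one-step bound $((n+1)\tau)^{1/6}-(n\tau)^{1/6}\ge\tfrac{\tau}{12}(n\tau)^{-5/6}$ for $n\ge1$ and then sum. The only cosmetic difference is that you invoke the mean value theorem for $s\mapsto s^{1/6}$ and then bound the unknown intermediate point $\xi$ by $(n+1)\tau\le 2n\tau$, whereas the paper applies the concavity tangent-line inequality at the right endpoint directly; both rely on the same constants $\tfrac{n+1}{n}\le 2$ and $2^{5/6}<2$ to reach $\tfrac{1}{12}$.
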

  \begin{proof}
    Inequality \eqref{eq:elementary2} clearly follows from
    \begin{align*}
     \frac\tau{12}(n\tau)^{-5/6} \le ((n+1)\tau)^{1/6}-(n\tau)^{1/6}
    \end{align*}
    for all $n=1,2,\ldots$.
    This is a consequence of the ``below tangent formula'' for the concave function $s\mapsto s^{1/6}$,
    \begin{align*}
      (n\tau)^{1/6} \le ((n+1)\tau)^{1/6} - \frac16((n+1)\tau)^{-5/6}\tau,
    \end{align*}
    in combination with the observation that
    \begin{align*}
      \frac16\left(\frac n{n+1}\right)^{5/6} \ge \frac16 2^{-5/6} > \frac1{12}.
    \end{align*}
  \end{proof}

  \section{A convergence theorem for powers}
  \begin{theorem}
    \label{thm:interpol}
    For $0 < \beta < \gamma < \alpha < \infty$ with $\alpha p = \beta q = \gamma r$ 
    and a sequence of nonnegative functions $(u_n)_{n \ \mathbb{N}}$ on $(0,T) \times \rn$
    such that
    \begin{enumerate}
    \item [(i)] $u_n^\alpha \rightarrow u^\alpha $ strongly in $L^p(0,T;W^{1,p}(\rn))$ 
    \item[(ii)] $(u_n^\beta)_{n \in \mathbb{N}}$ is bounded in $L^q(0,T;W^{1,q}(\rn))$
    \end{enumerate}
    we have (up to subsequences) $u_n^\gamma \rightarrow u^\gamma$ strongly in $L^r(0,T;W^{1,r}(\rn))$ .
  \end{theorem}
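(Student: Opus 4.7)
Set $s := \alpha p = \beta q = \gamma r$, so $p<r<q$ by $\beta<\gamma<\alpha$. My plan is: first obtain strong convergence $u_n^\gamma\to u^\gamma$ in $L^r(0,T;L^r(\rn))$ via a Brezis--Lieb argument, then handle the gradient by a Vitali argument after splitting the domain into $\{u>0\}$ and $\{u=0\}$. Extracting subsequences without relabelling from (i), I may assume $u_n\to u$ and $\nabla(u_n^\alpha)\to\nabla(u^\alpha)$ pointwise a.e.\ on $(0,T)\times\rn$, with $u_n^\alpha$ and $|\nabla(u_n^\alpha)|$ dominated by fixed $L^p(L^p)$ functions. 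Since $\|u_n^\gamma\|_{L^r(L^r)}^r=\int_0^T\intrn u_n^s\dd x\dd t\to\int_0^T\intrn u^s\dd x\dd t=\|u^\gamma\|_{L^r(L^r)}^r$ by (i), Brezis--Lieb combined with the pointwise convergence $u_n^\gamma\to u^\gamma$ yields strong $L^r(L^r)$-convergence of the functions themselves.

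For the gradients I would exploit the chain-rule identity $\nabla(u_n^\gamma)=(\gamma/\beta)\,u_n^{\gamma-\beta}\,\nabla(u_n^\beta)$, which is free of singularities since $\gamma-\beta>0$. H\"older's inequality with conjugate exponents $\gamma/(\gamma-\beta)$ and $\gamma/\beta=q/r$ gives, for every measurable $E\subset(0,T)\times\rn$,
\[
\int_E |\nabla(u_n^\gamma)|^r\dd x\dd t \;\le\; C\,\Bigl(\int_E u_n^s\dd x\dd t\Bigr)^{(\gamma-\beta)/\gamma}\Bigl(\int_E |\nabla(u_n^\beta)|^q\dd x\dd t\Bigr)^{r/q}.
\]
Choosing $E=(0,T)\times\rn$ and invoking (ii) bounds $\nabla(u_n^\gamma)$ uniformly in $L^r(L^r)$; and since $\{u_n^s\}$ converges in $L^1$ it is equi-integrable, so the same H\"older estimate forces equi-integrability of $\{|\nabla(u_n^\gamma)|^r\}$. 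To upgrade to strong convergence I would then split the domain. On $\{u>0\}$, $u_n\to u>0$ a.e.\ implies $u_n>0$ eventually, so $\nabla u_n = \alpha^{-1}u_n^{1-\alpha}\nabla(u_n^\alpha)\to\alpha^{-1}u^{1-\alpha}\nabla(u^\alpha)=\nabla u$ a.e., and therefore $\nabla(u_n^\gamma)\to\nabla(u^\gamma)$ a.e.\ on $\{u>0\}$; combined with equi-integrability, Vitali's theorem yields strong $L^r$-convergence there. On $\{u=0\}$, the standard level-set property of Sobolev functions gives $\nabla(u^\gamma)=0$ a.e., and the H\"older bound with $E=\{u=0\}$ reduces the residual to showing $\int_{\{u=0\}}u_n^s\dd x\dd t\to 0$, which is immediate from $u_n\to 0$ a.e.\ on $\{u=0\}$ together with the $L^1$-dominant via dominated convergence.

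The main obstacle is precisely the behaviour on $\{u=0\}$: the alternative identity $\nabla(u_n^\gamma)=(\gamma/\alpha)u_n^{\gamma-\alpha}\nabla(u_n^\alpha)$, which would seem natural given the strong convergence provided by (i), breaks down there, since the negative power $u_n^{\gamma-\alpha}$ blows up while $\nabla(u_n^\alpha)$ tends to zero, producing an indeterminate product. The virtue of the $\beta$-based identity, combined with the equi-integrability trick, is to replace the missing pointwise control on $\{u=0\}$ by uniform integral control that tends to zero, and that is how I would close the argument.
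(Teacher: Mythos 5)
Your proof is correct, and it takes a genuinely different route from the paper. The paper's argument sits inside the optimal-transport machinery of Ambrosio--Gigli--Savar\'e: it treats $u_n$ as densities of measures $\mu_n$, packages $\nabla u_n/u_n$ as an $L^p(\mu_n)$-vector field $v_n$, upgrades $L^p$-convergence-with-norms to AGS's notion of ``strong convergence with respect to varying reference measures'' (Definition 5.4.3), and then invokes narrow convergence of the plans $(\id\times v_n)\#\mu_n$ together with the growth lemma (Theorem 5.1.7) to pass the $r$th power to the limit. It also needs an explicit truncation $u_{n,\eps}=\max(u_n,\eps)$ and a spatial cutoff $\chi_l$ to work around the degeneracy at $\{u=0\}$ and the unboundedness of $\rn$. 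Your argument replaces all of this with elementary tools: Brezis--Lieb for the functions themselves, the chain-rule identity $\nabla(u_n^\gamma)=(\gamma/\beta)\,u_n^{\gamma-\beta}\nabla(u_n^\beta)$ to keep the exponent on $u_n$ nonnegative, a single H\"older estimate to get uniform $L^r$-integrability and tightness of the gradients from the $L^1$-convergence of $\{u_n^s\}$, and then Vitali on $\{u>0\}$ with a direct zero-estimate on $\{u=0\}$. The decomposition $\{u>0\}\cup\{u=0\}$ and the observation that the $\beta$-identity --- rather than the more natural $\alpha$-identity --- avoids the indeterminate product on the vanishing set is precisely the right insight, and it buys you a self-contained proof with no recourse to the AGS framework, no truncation, and no cutoff/diagonalization at the end. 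The only minor imprecision is the word ``equi-integrability'': since $(0,T)\times\rn$ has infinite measure, Vitali also needs tightness, but the same H\"older estimate delivers it (take $E=(0,T)\times(\rn\setminus B_R)$ and use that $L^1$-convergent sequences are tight), so the argument closes as stated.
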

  We rephrase a slight variant, respectively extension, of the proof of \cite[Proposition 6.1]{JuMi}. 
  \begin{proof}
    At first assume that the sequence is uniformly bounded away from 0, i.e.
    $u_n \geq \eps \; \forall n \in \mathbb{N}$ for some $\epsilon > 0$, 
    and that all $u_n$ have support on a ball with radius $R > 0$, $\Omega := B_0(R)$.
    By the "missing term in Fatou's Lemma" 
    it is enough to show pointwise convergence and convergence of the norms 
    for the strong convergence of $(u_n^\gamma )_{n \in \mathbb{N}}$ in  $L^r(0,T;W^{1,r}(\Omega))$.
    
    For this, define $\mu_n, \mu$ to be the measures with densities $u_n , u$ (with respect to the Lebesgue measure). 
    As $(u_n)_{n \in \mathbb{N}}$ converges in $L^1(0,T;L^1(\Omega))$, 
    which of course implies weak convergence in $L^1(0,T;L^1(\Omega))$, 
    $(\mu_n)_{n \in \mathbb{N}}$ converges narrowly to $\mu$ on $(0,T)\times \Omega$ 
    (since $C_b((0,T) \times \Omega) \subset L^\infty((0,T)\times \Omega)$). 
    Moreover for $v_n := \nabla u_n / u_n$ we have 
    \begin{equation}\label{eq: existence_aprioriEstimates_ju_sobolevInt}
      \int_{(0,T)\times\Omega} |v_n|^p \, d\mu_n(x) 
      = \int_{(0,T)\times\Omega} |\nabla u_n|^p u_n^{p(\alpha-1)}  \, dx 
      = \alpha^{-p} \int_{(0,T)\times\Omega} |\nabla u_n^\alpha|^p \, dx
    \end{equation}
    and hence $v_n \in L^p(\mu_n, (0,T)\times\Omega)$. 
    
    Further, (\ref{eq: existence_aprioriEstimates_ju_sobolevInt}) and assumption $(i)$ imply
    \begin{equation*}
      \limsup_{n \rightarrow 0} \norm{v_n}_{L^p(\mu_n, (0,T)\times\Omega)} = \norm{v}_{L^p(\mu, (0,T)\times\Omega)},
    \end{equation*}
    hence $(v_n)_{n \in \mathbb{N}}$ converges in the sense of \cite[Definition 5.4.3.]{AGS} 
    strongly in $L^p(\mu, (0,T)\times\Omega)$, 
    which in addition to the narrow convergence of $(\mu_n)_{n \in \mathbb{N}}$,
    implies the narrow convergence of the transport plans $\gamma_n = (\id \times v_n)\#\mu_n$
    to $\gamma = (\id \times v)\#\mu$ in $\propp(((0,T)\times\Omega) \times ((0,T)\times\Omega))$,
    as stated in \cite[Theorem 5.4.4.]{AGS}.
    Since the $p$th and $q$th moment of $\mu_n$ are uniformly bounded,
    by applying \cite[Theorem 5.1.7.]{AGS} to $(\gamma_n)_{n \in \mathbb{N}}$ we find
    \begin{equation*}
      \lim\limits_{n \rightarrow \infty} \int_{(0,T)\times\Omega} f(x, v_n(x)) \, d\mu_n(x)
      =  \int_{(0,T)\times\Omega} f(x, v(x)) \, d\mu(x)
    \end{equation*}
    for functions $f$ with at most $r$-growth.
    Hence choosing $f(x,y) = |y|^r$ yields the norm convergence:
    \begin{equation*}
      \lim\limits_{n \rightarrow \infty} \int_{(0,T)\times\Omega} |\nabla u_n^\gamma|^r \,
      dx  = \lim\limits_{n \rightarrow \infty} \int_{(0,T)\times\Omega} |v_n(x)|^r \, d\mu_n(x)
      = \int_{(0,T)\times\Omega} |\nabla u^\gamma|^r \, dx.
    \end{equation*}
    By assumption $(i)$ we can extract pointwise convergent subsequences (not relabelled)
    \begin{align*}
      u^\alpha_n(x) &\rightarrow u^\alpha(x) \quad  a.e. \, x \in (0,T) \times \Omega\\
      \nabla u^\alpha_n(x) &\rightarrow \nabla u^\alpha(x) \quad  a.e. \, x \in (0,T) \times \Omega
    \end{align*}
    which, taking the strict positivity of the functions into account, yields
    \begin{align*}
      u^\gamma_n(x) &\rightarrow u^\gamma(x) \quad a.e. \, x \in (0,T) \times \Omega\\
      \nabla u^\gamma_n(x) =  u_n^{\gamma - \alpha}(x) \nabla u^\alpha_n(x)
                    &\rightarrow  u^{\gamma - \alpha}(x) \nabla u^\alpha(x)
                      = \nabla u^\gamma(x) \quad  a.e. \, x \in (0,T) \times \Omega
    \end{align*}
    and hence finally the strong convergence of the subsequence.
    
    For a nonnegative sequence $(u_n)_{n \in \mathbb{N}}$ we apply the above result to the truncated functions
    \begin{equation*}
      u_{n, \epsilon}(x) := \max\{ u_n(x), \epsilon \} \quad a.e. \, x \in (0,T) \times \Omega
    \end{equation*}
    for a sequence $\epsilon \downarrow 0$.
    Using the norm convergence of $(u_n^\gamma)_{n \in \mathbb{N}}$ and the truncation property of $W^{1,r}(\Omega)$,
    we have 
    \begin{equation}
      \norm{u_{n}^\gamma - u_{n, \epsilon}^\gamma }_{L^r(0,T;W^{1,r}(\Omega))} = 
      \norm{u_{n}^\gamma - (u^\gamma)_{n, {\epsilon}^\gamma}}_{L^r(0,T;W^{1,r}(\Omega))} \rightarrow 0
    \end{equation} 
    for $\eps \rightarrow 0$ and thus the result also holds for nonnegative functions.
    
    To extend the argument to the whole $\rn$, one argues by
    approximating $u_n$ by the sequence $u_{n, l} = \chi_l
     u_n$ for $\chi_l(x) = \chi(|x|/l)$
    where $\chi\in C^\infty_c(\setR^d)$ is a cut-off function
    with $0\le\chi\le1$, where $\chi(x)\equiv1$ for $|x|\le1$ and $\chi(x)=0$ for $|x|\ge2$.
    Passing to subsequences and using the former calculations,
    by diagonalization one finds a sequence converging on every ball $B_0(l)$ for $l \in \mathbb{N}$.
    With the norm boundedness of $u^\gamma_n$ in $L^r(0,T;W^{1,r}(\Omega))$
    obtained by using the representation given in (\ref{eq: existence_aprioriEstimates_ju_sobolevInt})
    and interpolating between the uniformly bounded $\norm{v_n}_{L^p(\mu, (0,T)\times\rn)}$ and $\norm{v_n}_{L^q(\mu, (0,T)\times\rn)}$
    one eventually concludes the argument for functions on the whole $\rn$.
  \end{proof}

  \section{Integration by parts}
  For convenience of the reader, we recall the basic rule for integration by parts.
  \begin{lemma}
    \label{lem:Gauss}
    Let $f,g\in L^1(\setR^d)$ be given,
    and assume that there exists a vector field $\velo\in L^1(\setR^d;\setR^d)$
    such that $f=g+\dv\velo$ in the sense of distributions.
    Then
    \begin{align}
      \label{eq:f=g}
      \intrn f\dd x = \intrn g\dd x.
    \end{align}
  \end{lemma}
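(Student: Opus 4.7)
The plan is to reduce the statement to showing $\intrn\dv\velo\dd x = 0$. Since $f = g + \dv\velo$ holds distributionally and $f,g\in L^1(\setR^d)$, the distributional divergence $\dv\velo$ is represented by the $L^1$ function $f-g$, so both sides of \eqref{eq:f=g} are finite and the claim is equivalent to the vanishing of the integral of $\dv\velo$.

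To establish that, I would introduce a family of smooth cutoffs $\chi_R\in C^\infty_c(\setR^d)$ with $0\le\chi_R\le1$, $\chi_R\equiv1$ on $B(0,R)$, $\operatorname{supp}\chi_R\subset B(0,2R)$, and $|\nabla\chi_R|\le C/R$ for a constant $C$ independent of $R$. Testing the distributional identity $\dv\velo = f-g$ against $\chi_R$ produces
\begin{align*}
  \intrn \chi_R(f-g)\dd x = \langle\dv\velo,\chi_R\rangle = -\intrn \velo\cdot\nabla\chi_R\dd x.
\end{align*}
As $R\to\infty$ the left-hand side tends to $\intrn(f-g)\dd x$ by dominated convergence (majorant $|f|+|g|$), while
\begin{align*}
  \left|\intrn \velo\cdot\nabla\chi_R\dd x\right|
  \le \frac{C}{R}\int_{R\le|x|\le2R}|\velo|\dd x
  \le \frac{C}{R}\|\velo\|_{L^1(\setR^d)}
  \;\longrightarrow\; 0.
\end{align*}
Combining the two limits yields $\intrn(f-g)\dd x = 0$, which is \eqref{eq:f=g}.

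The only conceptual point worth flagging is that $\velo$ is assumed merely integrable, with no pointwise decay at infinity; one therefore cannot apply the classical divergence theorem on a ball $B(0,R)$ and let $R\to\infty$, since boundary traces of $\velo$ are not controlled. The cutoff argument circumvents this by replacing boundary values with the annular integral of $\velo$ against $\nabla\chi_R$, and global integrability of $\velo$ alone — combined with the $1/R$ gain from the cutoff's gradient — is sufficient to kill that contribution in the limit.
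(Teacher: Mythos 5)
Your proof is correct and follows essentially the same route as the paper: test the distributional identity against a scaled cutoff $\chi_R$, let the $L^1$ functions pass by dominated convergence, and kill the vector-field term using the $O(1/R)$ bound on $\nabla\chi_R$ together with $\velo\in L^1$. The only cosmetic difference is that you bound $\bigl|\int\velo\cdot\nabla\chi_R\bigr|\le (C/R)\|\velo\|_{L^1}$ directly, whereas the paper invokes dominated convergence for $\velo\cdot\nabla\chi_R\to0$ in $L^1$; both are equally valid.
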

  \begin{proof}
    Let $\chi\in C^\infty_c(\setR^d)$ be a cut-off function
    with $0\le\chi\le1$, with $\chi(x)\equiv1$ for $|x|\le1$, and with $\chi(x)=0$ for $|x|\ge2$.
    Defining as usual $\chi_R(x):=\chi(x/R)$ for $R>1$,
    we obtain
    \begin{align*}
      \intrn \chi_Rf\dd x = \intrn \chi_Rg\dd x + \langle \dv\velo,\chi_R\rangle
      = \intrn  g\chi_R\dd x-\intrn\velo\cdot\nabla\chi_R\dd x.
    \end{align*}
    On the one hand,
    since $g\in L^1(\setR^d)$, we have that $g\chi_R\to g$ in $L^1(\setR^d)$ as $R\to\infty$ by dominated convergence.
    On the other hand,
    since $\velo\in L^1(\setR^d;\setR^d)$ and $\nabla\chi_R(x)=\frac1R\nabla\chi(x/R)$,
    we have that $\velo\cdot\nabla\chi_R\to0$ in $L^1(\setR^d)$ as $R\to\infty$, again by dominated convergence.
    This shows \eqref{eq:f=g}.
  \end{proof}
  An application of this variant of integration by parts is the following identity, taken from \cite[Theorem 3.1]{GST}:
  \begin{align*}
    4\intrn |\nabla\sqrt[4]u|^4\dd x
    = 2\intrn \nabla\sqrt[4]u\cdot\nabla^2\sqrt u\cdot\nabla\sqrt[4]u \dd x
    + \intrn \Delta\sqrt u |\nabla\sqrt[4]u|^2\dd x
  \end{align*}
  for all positive $u\in L^1(\setR^d)$ with $\sqrt u\in W^{2,2}(\setR^d)$.
  The respective vector field is given by $\velo = |\nabla\sqrt[4]u|^2\nabla\sqrt u$,
  which is $L^1(\setR^d;\setR^d)$ since $\nabla\sqrt u\in L^2(\setR^d;\setR^d)$ and $\nabla\sqrt[4]u\in L^4(\setR^d)$.
  The relation $f+\dv\velo=g$ is easily seen for positive and smooth functions $u$,
  and carries over to the afore mentioned more general $u$ via local approximation.
  
  \section{Arzel\'{a}-Ascoli-Theorem}
  For enhanced self-containment, we replicate the statement of the generalized Arzel\'{a}-Ascoli Theorem from \cite[Proposition 3.3.1]{AGS},
  which has been essential in the proof of Lemma \ref{lem:limit}.
  \begin{lemma}\label{lem:arzela-ascoli}
    Let $(\mathcal{I}, d)$ be a complete metric space and $\sigma$ an Hausdorff topology on $\mathcal{I}$ compatible with $d$ in the sense that for sequences $(x_n), (y_n) \subset \mathcal{I}$ 
    \begin{align*}
      x_n \xrightarrow{d} x \quad &\Longrightarrow \quad  x_n \xrightarrow{\sigma} x\\
      \textnormal{and} \quad (x_n,y_n) \xrightarrow{\sigma} (x,y) \quad &\Longrightarrow \quad \liminf_{n \rightarrow \infty} d(x_n,y_n) \geq d(x,y).
    \end{align*}
    Further, let $K \subset \mathcal{I}$ be a sequentially compact set w.r.t. $\sigma$ and let $u_n: [0,T] \rightarrow \mathcal{I}$ be curves such that
    \begin{gather*}
      u_n(t) \in K \quad \forall n \in \mathbb{N}, \quad t \in [0,T],\\
      \limsup_{n \rightarrow \infty} d(u_n(s), u_n(t)) \leq w(s,t) \quad \forall s,t \in [0,T],
    \end{gather*}
    for a (symmetric) function $w: [0,T]\times [0,T] \rightarrow [0, +\infty)$, such that
    \begin{equation*}
      \lim_{(s,t) \rightarrow (r,r)} w(s,t) = 0 \quad \forall r \in [0,T]\setminus \mathcal{I},
    \end{equation*}
    where $\mathcal{I}$ is an (at most) countable subset of $[0,T]$. Then there exists an increasing subsequence $k \mapsto n(k)$ and a limit curve $u: [0,T] \rightarrow \mathcal{I}$ such that
    \begin{equation*}
      u_{n(k)} \xrightarrow{\sigma} u(t) \quad \forall t \in [0,T], \quad \textnormal{$u$ is $d$-continuous in } [0,T]\setminus \mathcal{I}.
    \end{equation*}
  \end{lemma}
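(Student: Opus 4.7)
The plan is to prove this by a Cantor diagonal extraction on a countable dense subset of $[0,T]$, followed by an extension argument that uses the modulus $w$ together with the lower-semicontinuity of $d$ along jointly $\sigma$-convergent pairs. To sidestep the notational collision in the statement (the symbol $\mathcal{I}$ is used both for the ambient metric space and for the exceptional subset of $[0,T]$), I will write $N\subset[0,T]$ for the at-most-countable exceptional set appearing in the hypothesis on $w$. I would first fix a countable dense set $D\subset[0,T]$ with $N\subset D$, for example $D=(\mathbb{Q}\cap[0,T])\cup N$.

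First I would apply, using $\sigma$-sequential compactness of $K$ and that $u_n(t)\in K$ for all $n,t$, a standard Cantor diagonal argument to produce a subsequence $(u_{n(k)})_k$ such that $u_{n(k)}(t)\xrightarrow{\sigma}u(t)\in K$ as $k\to\infty$ for every $t\in D$. The Hausdorff property of $\sigma$ ensures that $u(t)$ is uniquely determined on $D$. To extend $u$ to $[0,T]\setminus N$, I fix $t\in[0,T]\setminus N$ and pick $t_m\in D$ with $t_m\to t$. By the compatibility hypothesis applied to pairs, combined with the limsup bound on $w$,
\[
 d(u(t_m),u(t_{m'}))\le \liminf_{k\to\infty}d(u_{n(k)}(t_m),u_{n(k)}(t_{m'}))\le w(t_m,t_{m'}),
\]
and the right-hand side tends to $0$ as $m,m'\to\infty$ since $t\notin N$. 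Hence $(u(t_m))_m$ is $d$-Cauchy; by completeness of $(\mathcal{I},d)$ it $d$-converges to a point I call $u(t)$, and the usual triangle inequality shows this definition is independent of the approximating sequence.

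Next I would verify that $u_{n(k)}(t)\xrightarrow{\sigma}u(t)$ for every $t\in[0,T]$. For $t\in[0,T]\setminus N$, let $v\in K$ be any $\sigma$-cluster point of $(u_{n(k)}(t))_k$, which exists by $\sigma$-sequential compactness of $K$; extracting a further subsequence realizing this cluster and applying compatibility yields
\[
 d(v,u(t_m))\le \liminf_{k\to\infty}d(u_{n(k)}(t),u_{n(k)}(t_m))\le w(t,t_m),
\]
and letting $m\to\infty$ forces $v=u(t)$ by uniqueness of $d$-limits. Hence every $\sigma$-cluster point equals $u(t)$, so the full subsequence $\sigma$-converges at $t$. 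At the countably many points of $N$ the $\sigma$-convergence is already provided by the first step, since $N\subset D$. Finally, letting $k\to\infty$ in
\[
 d(u(s),u(t))\le \liminf_{k\to\infty}d(u_{n(k)}(s),u_{n(k)}(t))\le w(s,t),
\]
and invoking $w(s,t)\to0$ as $(s,t)\to(r,r)$ for $r\in[0,T]\setminus N$, delivers the claimed $d$-continuity of $u$ off $N$.

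The hard part will be the careful handling of Steps 2 and 3: one must justify that the lower-semicontinuity of $d$ along pairs is a genuine consequence of the stated compatibility (which is formulated for sequences in $\mathcal{I}\times\mathcal{I}$ under the product $\sigma$-topology), and one must be sure that choosing $\sigma$-cluster points at arbitrary $t$ outside $D$ does not disturb the subsequence fixed in Step 1. This is precisely where the hypothesis that the exceptional set $N$ be at most countable becomes essential — otherwise diagonal extraction alone could not pin down $u$ uniformly, and the Cauchy argument producing $u(t)$ would fail at any accumulation point of $N$.
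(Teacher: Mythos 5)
Your proof is correct. Note that the paper itself gives no proof of this lemma: it is quoted verbatim (for self-containment) from \cite[Proposition 3.3.1]{AGS}, and your argument — diagonal extraction over a countable dense set containing the exceptional set, extension to the remaining points via the modulus $w$, the $\sigma$-lower-semicontinuity of $d$ and completeness, and the cluster-point/sub-subsequence argument to upgrade to $\sigma$-convergence at every $t$ — is essentially the standard proof given there, so there is nothing to compare beyond saying you have correctly reconstructed the cited argument.
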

  In the proof of Lemma \ref{lem:limit}, this result is applied as follows.
  The complete metric space is $(\propo,\wass)$,
  the auxiliary topology is the one induced by narrow convergence;
  we have recalled the compatibility conditions in Section \ref{sct:background}.
  The compact subset $K\subset\propo$ is formed by the probability measures
  satisfying the uniform bound on the second moment from \eqref{eq:impromom};
  sequential compactness of $K$ is a consequence of Prokhorov's theorem.
  The modulus $w$ of continuity is given by the right-hand side in the H\"older estimate \eqref{eq:holderall} at $\tau=0$.
  Notice that the conclusion above guarantees convergence in $\sigma$, i.e., narrowly,
  not necessarily in $\wass$.
\end{appendix}

\bibliographystyle{plain}
\bibliography{QDD6}

\end{document}